\newcommand{\D}{\mathcal{D}}
\newcommand{\bangle}[1]{\langle #1 \rangle}
\newtheorem{theorem}{Theorem}[section]
\newtheorem{lemma}[theorem]{Lemma}
\newtheorem{proposition}[theorem]{Proposition}
\newtheorem{corollary}[theorem]{Corollary}
{\theoremstyle{remark}
\newtheorem{remark}[theorem]{Remark}

}
\theoremstyle{definition}
\newtheorem{construction}[theorem]{Construction}
\newtheorem{definition}[theorem]{Definition}
\newtheorem{example}[theorem]{Example}
\newcommand{\CC}{\mathbf{C}}
\newcommand{\QQ}{\mathbf{Q}}
\newcommand{\ZZ}{\mathbf{Z}}
\newcommand{\NN}{\mathbf{N}}
\newcommand{\PP}{\mathbf{P}}
\newcommand{\A}{\mathbf{A}}
\newcommand{\m}{\mathfrak{m}}
\newcommand{\lin}[1]{\text{span}\,(#1)}
\newcommand{\equivn}{{\stackrel{\scriptscriptstyle\text{num}}{\sim}}}
\newcommand{\KK}{k}
\DeclareMathOperator{\codim}{codim}
\DeclareMathOperator{\face}{face}
\DeclareMathOperator{\quot}{Quot}
\DeclareMathOperator{\rk}{rank}
\DeclareMathOperator{\relint}{relint}
\DeclareMathOperator{\tail}{tail}
\DeclareMathOperator{\Hom}{Hom}
\DeclareMathOperator{\divisor}{div}
\DeclareMathOperator{\wdiv}{Div}
\DeclareMathOperator{\spec}{Spec}
\DeclareMathOperator{\proj}{Proj}
\DeclareMathOperator{\ord}{ord}
\DeclareMathOperator{\supp}{supp}
\DeclareMathOperator{\SPEC}{\mathbf{Spec}}
\DeclareMathOperator{\conv}{conv}
\DeclareMathOperator{\pos}{pos}
\DeclareMathOperator{\cl}{Cl}
\DeclareMathOperator{\num}{Num}
\DeclareMathOperator{\numq}{Num_{\hspace{.2pt}\QQ}}
\newcommand{\CO}{{\mathcal{O}}}
\newcommand{\OO}{{\mathcal{O}}}
\newcommand{\DD}{\mathcal{D}}
\newcommand{\TT}[0]{T}
\newcommand{\AF}{\mathbf{A}}
\newcommand{\tA}[0]{\ensuremath{\widetilde{A}}}
\newcommand{\tX}[0]{{\ensuremath{\widetilde{X}}}}
\newcommand{\tY}[0]{{\ensuremath{\widetilde{Y}}}}
\newcommand{\divi}[0]{\ensuremath{\operatorname{div}}}
\newcommand{\FF}[0]{\ensuremath{\mathscr{F}}}
\newcommand{\pol}[0]{\ensuremath{\operatorname{Pol}}}
\newcommand{\rank}[0]{\ensuremath{\operatorname{rank}}}
\begin{document}
\title[Normal singularities with torus actions]{Normal singularities
  with torus actions}

\author[A.~Liendo]{Alvaro Liendo}
\address{Universit\'e Grenoble I, 
	Institut Fourier, 
	UMR 5582 CNRS-UJF, 
	BP 74, 
	38402 St.\ Martin d'H\`eres c\'edex, 
	France}
\email{alvaro.liendo@ujf-grenoble.fr}

\author[H.~S\"uss]{Hendrik S\"u{\ss}}
\address{Institut f\"ur Mathematik,
        LS Algebra und Geometrie,
        Brandenburgische Technische Universit\"at Cottbus,
        PF 10 13 44, 
        03013 Cottbus, Germany}
\email{suess@math.tu-cottbus.de}

\thanks{{\it 2010 Mathematics Subject Classification}:
  14J17, 14E15.\\
  \mbox{\hspace{11pt}}{\it Key words}: torus actions, T-varieties,
  characterization of singularities, toroidal desingularization.}

\begin{abstract}
  We propose a method to compute a desingularization of a normal
  affine variety $X$ endowed with a torus action in terms of a
  combinatorial description of such a variety due to Altmann and
  Hausen. This desingularization allows us to study the structure of
  the singularities of $X$. In particular, we give criteria for $X$ to
  have only rational, ($\QQ$-)factorial, or ($\QQ$-)Gorenstein
  singularities. We also give partial criteria for $X$ to be
  Cohen-Macaulay or log-terminal.

  Finally, we provide a method to construct factorial affine varieties
  with a torus action. This leads to a full classification of such
  varieties in the case where the action is of complexity one.
\end{abstract}

\maketitle
\tableofcontents

\section*{Introduction}

The theory of singularities on toric varieties is well
established. All toric singularities are log-terminal and thus rational
and Cohen-Macaulay. Furthermore, there are explicit combinatorial
criteria to decide if a given toric variety is ($\QQ$-)factorial or
($\QQ$-)Gorenstein, see \cite{Dai02}. In this paper we elaborate the
analog criteria for more general varieties admitting torus actions.

Let $X$ be a normal variety endowed with an effective torus
action. The complexity of this action is the codimension of the
maximal orbits. By a classic theorem of Sumihiro \cite{Sum74} every
point $x\in X$ posses an affine open neighborhood invariant under the
torus action. Hence, local problems can be reduced to the affine case.

There are well known combinatorial descriptions of normal
T-varieties. We refer the reader to \cite{Dem70} and
\cite{Oda88} for the case of toric varieties, to \cite[Ch. 2
and 4]{KKMS73} and \cite{Tim08} for the complexity one case, and to
\cite{MR2207875,AHS08} for the general case.

Let us fix some notation. We let $\KK$ be an algebraically closed
field of characteristic 0, $M$ be a lattice of rank $n$, and $\TT$ be
the algebraic torus $\TT=\spec\KK[M]\simeq(\KK^*)^n$. A T-variety
$X$ is a variety endowed with an effective algebraic action of
$\TT$. For an affine variety $X=\spec A$, introducing a $\TT$-action
on $X$ is the same as endowing $A$ with an $M$-grading. 

We let $N_{\QQ}=N\otimes\QQ$, where $N=\Hom(M,\ZZ)$ is the dual
lattice of $M$. Any affine toric variety can be described via a
polyhedral cone $\sigma\subseteq N_{\QQ}$. Similarly, the
combinatorial description of normal affine T-varieties due to Altmann
and Hausen \cite{MR2207875} involves the data $(Y,\sigma,\DD)$ where
$Y$ is a normal semiprojective variety, $\sigma\subseteq N_{\QQ}$ is a
polyhedral cone, and $\DD$ is a polyhedral divisor on $Y$ i.e., a
divisor whose coefficients are polyhedra in $N_{\QQ}$ with tail cone
$\sigma$.

The normal affine variety corresponding to the data $(Y,\sigma,\DD)$
is denoted by $X[\DD]$. The construction involves another normal
variety $\tX[\DD]$, which is affine over $Y$, and a proper birational
morphism $r:\tX[\DD]\rightarrow X[\DD]$, see Section
\ref{preliminaries} for more details.

This description is not unique. In Section \ref{sec:can-desing}, we
show that for every T-variety $X$ there exists a polyhedral divisor
$\DD$ such that $X=X[\DD]$ and $\tX[\DD]$ is a toroidal
variety. Hence, the morphism $r:\tX[\DD]\rightarrow X[\DD]$ is a
partial desingularization of $X$ having only toric
singularities. Moreover, a desingularization of $\tX[\DD]$ can be
obtained via toric methods and thus also a desingularization of
$X[\DD]$, see Construction~\ref{sec:constr-desing}.

Let $X$ be a normal variety and let $\psi:Z\rightarrow X$ be a
desingularization. Usually, the classification of singularities
involves the higher direct images of the structure sheaf
$R^i\psi_*\OO_Z$. In particular, $X$ has rational singularities if
$R^i\psi_*\OO_Z=0$ for all $i\geq 1$, see e.g., \cite{Art66,Elk78}. In
section \ref{sec:rational-sing} we compute the higher direct image
sheaves $R^i\psi_*\OO_Z$ for a T-variety $X[\DD]$ in terms of
the combinatorial data and we give a criterion for $X[\DD]$ to have
rational singularities.

A well known theorem of Kempf \cite[p. 50]{KKMS73} states that a
variety $X$ has rational singularities if and only if $X$ is
Cohen-Macaulay and the induced map $\psi_*:\omega_Z\hookrightarrow
\omega_X$ is an isomorphism. In Proposition \ref{cmgen} we apply
Kempf's Theorem to give a partial characterization of T-varieties
having Cohen-Macaulay singularities.

Invariant $\TT$-divisors were studied in \cite{PeSu08}. In particular,
a description of the class group, and a representative of the
canonical class of $X[\DD]$ are given. In Section
\ref{sec:canonical-div} we use this results to state necessary and
sufficient conditions for $X[\DD]$ to be ($\QQ$-)factorial or
($\QQ$-)Gorenstein in terms of the combinatorial data. Furthermore, in
Theorem \ref{sec:thm-log-terminal} we apply the partial
desingularization obtained in Section \ref{sec:can-desing} to give a
criterion for $X[\DD]$ to have log-terminal singularities.

In \cite{Wat81} some of the results in Sections
\ref{sec:rational-sing} and \ref{sec:canonical-div} were proved for a
1-dimensional torus action on $X$. Our results can be seen as the
natural generalization of these results of Watanabe, see also
\cite[Section 4]{FlZa03c}.

In Section \ref{sec:complexity-one} we specialize our results in
Sections \ref{sec:rational-sing} and \ref{sec:canonical-div} for a
T-variety $X[\DD]$ of complexity one. In this case, the variety $Y$ in
the combinatorial data is a smooth curve. This make the criteria more
explicit. In particular, if $X[\DD]$ has $\QQ$-Gorenstein or rational
singularities, then $Y$ is either affine or the projective line.

Finally, in Section \ref{sec:fact-t-vari} we provide a method to
construct factorial T-varieties based on the criterion for
factoriality given in Proposition \ref{sec:prop-factorial}. In the
case of complexity one, this method leads to a full classification of
factorial quasihomogeneous T-varieties analogous to the one given in
\cite{HHS09}. A common way to show that an affine variety is factorial
is to apply the criterion of Samuel \cite{MR0214579} or the
generalization by Scheja and Storch \cite{MR751470}.  However, for the
majority of the factorial varieties that we construct with our method
these criteria do not work.

In the entire paper the term variety means a normal integral scheme of
finite type over an algebraically closed field $\KK$ of characteristic
0.

\vspace{1em} \textbf{Acknowledgements.} We would like to thank Yuri
Prokhorov for kindly and patiently answering our questions and
therefore helping us to finish this article and to overcome some
inexcusable lacks in our knowledge on birational geometry.  We also
thank Nathan Ilten. His suggestions helped us to improve this paper.

\section{Preliminaries} \label{preliminaries}
First, we fix some notation. In this paper $N$ is always a lattice of rank $n$, and $M = \Hom(N,\ZZ)$ is its dual. The associated rational vector spaces are denoted by $N_\QQ := N \otimes \QQ$ and $M_\QQ := M \otimes \QQ$. Moreover, $\sigma \subseteq N_\QQ$ is a pointed convex polyhedral cone, and $\sigma^\vee \subseteq M_\QQ$ is its dual cone. Let  $\sigma^\vee_M:=\sigma^\vee \cap M$ be the semigroup of lattice points inside $\sigma^\vee$.

We consider convex polyhedra $\Delta \subseteq N_\QQ$ admitting a decomposition as Minkowski sum $\Delta = \Pi + \sigma$ with a compact polyhedron $\Pi \subseteq N_\QQ$; we refer to $\sigma$ as the {\em tail cone\/} of $\Delta$ and refer to $\Delta$ as a {\em $\sigma$-polyhedron}. We denote the set of all $\sigma$-polyhedra by $\pol_\sigma(N_{\QQ})$. With respect to Minkowski addition, $\pol_\sigma(N_{\QQ})$ is a semigroup with neutral element $\sigma$.

We are now going to describe affine varieties with an action of the torus 
$T = \spec \KK[M]$.
Let $Y$ be a normal variety, which is semiprojective, i.e. projective over an affine variety. Fix a pointed convex  polyhedral cone $\sigma \subseteq N_\QQ$.
A {\em polyhedral divisor\/} on $Y$
is a formal finite sum
\begin{eqnarray*}
\D 
& = & 
\sum_Z \Delta_Z \cdot Z,
\end{eqnarray*}
where $Z$ runs over the prime divisors of $Y$ and the coefficients $\Delta_Z$ are all $\sigma$-polyhedra with $\Delta_Z=\sigma$ for all but finitely many of them.

For every $u \in \sigma^\vee_M$ we have the 
evaluation
\begin{eqnarray*}
\D(u)
& := & 
\sum_Z \min_{v \in \Delta_Z} \bangle{u ,v} \cdot Z,
\end{eqnarray*}
which is a $\QQ$-divisor living on $Y$.  This defines an evaluation
map $\D^\vee :\sigma^\vee \rightarrow \wdiv_\QQ Y$, which is piecewise
linear and the loci of linearity are (not necessarily pointed)
subcones of $\sigma^\vee$. Hence, $\D^\vee$ defines a quasifan which
subdivides $\sigma^\vee$. We call it the normal quasifan of $\D$.

We call the polyhedral
divisor $\D$ on $Y$ {\em proper\/} if the following conditions hold:
\begin{enumerate}
\item The divisor $\D(u)$ has a base point free multiple for $u \in
  \sigma^\vee_M$.
\item The divisor $\D(u)$ is big for $u\in \relint \sigma^\vee\cap M$.
\end{enumerate}

By construction, every polyhedral divisor $\D$ on a normal variety $Y$
defines a sheaf $\mathcal{A}[\D]$ of $M$-graded
$\mathcal{O}_{Y}$-algebras and its ring $A[\D]$ of global sections:
$$ 
\mathcal{A}[\D]
\ := \ 
\bigoplus_{u \in \sigma_M^\vee} \mathcal{O}(\D(u))\cdot\chi^u,
\qquad\qquad
A[\D]
\ := \
H^0(Y, \mathcal{A}[\D]).
$$

Now suppose that $\D$ is proper. Theorem 3.1 in \cite{MR2207875}
guarantee that $A[\D]$ is a normal affine algebra. Thus, we obtain an
affine varieties $X:=X[\D] := \spec A[\D]$ and
$\widetilde{X}:=\widetilde{X}[\D]:=\SPEC_{Y} \mathcal{A}[\D]$. Both
varieties $X$ and $\tX$ come with an effective action of the torus $T
= \spec \KK[M]$ and there is a proper birational morphism
$r:\widetilde{X} \rightarrow X$. Moreover, by definition of
$\widetilde{X}$ there is an affine morphism
$q:\widetilde{X}\rightarrow Y$ and the composition
\[\pi:=q\circ r^{-1}:X \dashrightarrow Y\]
is a rational map defined outside a closed subset of codimension at
least $2$.

Note that there is a natural inclusion $A[\D] \subset \bigoplus_{u \in
  M} K(Y) \cdot \chi^u$ which gives rise to a standard representation
$f \cdot \chi^u$ with $f \in K(Y)$ and $u \in M$ for every
semi-invariant rational function from $K(X) = K(\widetilde{X})$. With
this notation the rational map $\pi$ is given by the natural inclusion
of function field
\[\textstyle K(Y) \subset K(X) = \quot\left(\bigoplus_u K(Y) \cdot \chi^u\right).\]

By Theorem 3.4 in \cite{MR2207875}, every normal affine variety with
an effective torus action arises from a proper polyhedral divisor.

\begin{example}
  Letting $N=\ZZ^2$ and $\sigma=\pos((1,0),(1,6))$, in $N_{\QQ}=\QQ^2$
  we consider the $\sigma$-polyhedra $\Delta_0=\conv((1,0),
  (1,1))+\sigma$, $\Delta_1=(-\nicefrac{1}{2},0)+\sigma$, and
  $\Delta_1=(-\nicefrac{1}{3},0)+\sigma$.
  \begin{figure}[!ht]
    \centering
    \includegraphics[scale=0.8]{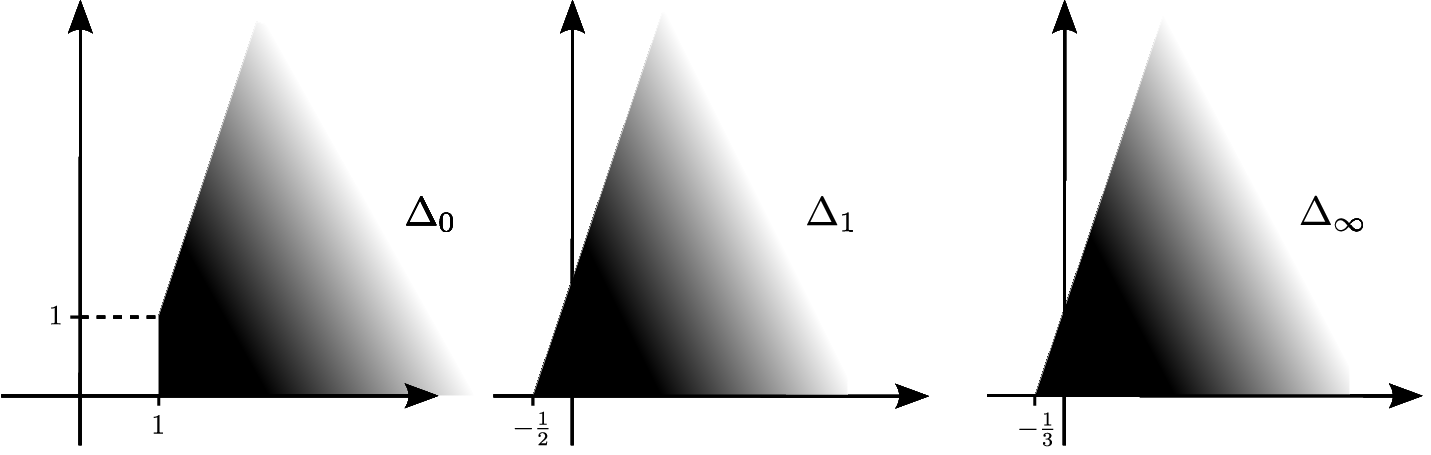} 
  \end{figure}

  Let $Y=\PP^1$ so that $K(Y)=\KK(t)$, where $t$ is a local coordinate
  at zero. We consider the polyhedral divisor
  $\DD=\Delta_0\cdot[0]+\Delta_1\cdot[1]+\Delta_\infty\cdot[\infty]$,
  and we let $A=A[\DD]$ and $X=\spec\,A$. An easy calculation shows
  that the elements
  $$u_1=\chi^{(0,1)},\quad u_2=\frac{t-1}{t^2}\chi^{(2,0)},\quad
  u_3=\frac{(t-1)^2}{t^3}\chi^{(3,0)}, \quad \mbox{and} \quad
  u_4=\frac{(t-1)^3}{t^5}\chi^{(6,-1)}$$ generate $A$ as an algebra. Furthermore,
  they satisfy the irreducible relation $u_2^3-u_3^2+u_1u_4=0$, and
  so
     \begin{align*} \label{iso-ex1}
    A\simeq\KK[x_1,x_2,x_3,x_4]/(x_2^3-x_3^2+x_1x_4)\,.
  \end{align*}
\end{example}

For a polyhedral divisor $\D$ and a (not necessarily closed) point $y
\in Y$ we define the {\em slice} of $\D$ at $y$ by $\D_y:=\sum_{Z
  \supset y} \Delta_Z$. Note, that via $\D_Z$ we may recover the
polyhedral coefficients of $\D$.

We want to describe the exceptional divisor of the morphism
$\widetilde{X}[\D] \rightarrow X[\D]$. In general on a T-variety the
two types of prime divisors.  Prime divisors of {\em horizontal} type
consist of orbit closures of dimension $\dim N -1$ and prime divisors
of {\em vertical} type of orbit closures of dimension $\dim N$. Note,
that a generic point on a vertical prime divisor has a finite isotropy
group, while on a horizontal prime divisor every point has infinite
isotropy.

Let $\rho \in \sigma(1)$ be a ray of the tail cone. We call it an {\em
  extremal ray} of $\D$ if $\D(u)$ is big for $u \in
\relint(\sigma^\vee \cap \rho^\perp)$. The set of extremal rays is
denoted by $\D^\times$. For a vertex $v \in \D_Z^{(0)}$ we consider
the smallest natural number $\mu(v)$ such that $\mu(v)\cdot \mu$ is a
lattice point.
A vertex $v$ is called {\em extremal} if $\D(u)|_Z$ is big for every
$u$ from the interior of the normal cone
\[\mathcal{N}(\Delta_Z,v) = \left\{u \mid \forall_{w \in \Delta}: \langle u, w -v \rangle > 0 \right\}.\]
 The set of extremal
vertices in $\D_Z$ is denoted by $\D_Z^\times$. 

\begin{theorem}[Prop.~3.13, \cite{PeSu08}] \label{th:divisor}
 For the invariant prime divisors on $\tX[\D]$ there are bijections
  \begin{enumerate}
\item between rays $\rho$ in $\sigma(1)$ and vertical prime divisors $\widetilde{E}_\rho$ of $\widetilde{X}[\D]$.
\item between pairs $(Z,v)$, where $Z$ is a prime divisor on $Y$ and
  $v$ is a vertex in $\D_Z$, and horizontal prime divisors
  $\widetilde{D}_{Z,v}$ of $\widetilde{X}[\D]$. 
  \end{enumerate}
  Via this correspondences the non-exceptional invariant divisor of
  $\tX[\D] \rightarrow X[\D]$, and therefore the invariant divisors
  $D_\rho$, $D_{Z,v}$ on $X[\D]$ correspond to the elements of $\rho
  \in \D^\times$ or $v \in \D^\times_Z$, respectively.
\end{theorem}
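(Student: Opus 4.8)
The statement classifies the $T$-invariant prime divisors of $\tX[\D]$, so I would begin by passing from divisors to valuations. Since $r$ is birational we have $K(\tX[\D])=K(X[\D])=\quot\bigl(\bigoplus_{u\in M}K(Y)\cdot\chi^u\bigr)$, and every $T$-invariant prime divisor $D$ of $\tX[\D]$ determines a homogeneous (i.e.\ $M$-degree preserving) discrete valuation $\nu_D$ on this field; conversely $\nu_D$ is non-negative on $\mathcal A[\D]$ and centred in codimension one, so $D\mapsto\nu_D$ identifies invariant prime divisors with such homogeneous divisorial valuations. A homogeneous valuation is determined by its values on semi-invariants $f\chi^u$, and I would classify it by restricting to the subfield $K(Y)$. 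Either $\nu_D|_{K(Y)}=0$, and then $\nu_D(f\chi^u)=\langle u,v\rangle$ for a rational point $v\in N_\QQ$ so that $D$ dominates $Y$; or $\nu_D|_{K(Y)}$ is a positive multiple of $\ord_Z$ for a prime divisor $Z\subseteq Y$, and then $D$ maps onto $Z$. These two cases produce the two bijections.

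In the first case I would work over the generic point $\eta\in Y$, where the slice of $\D$ is the tail cone and the generic fibre of the affine morphism $q$ is the toric variety $\spec K(Y)[\sigma^\vee_M]$. Its invariant prime divisors are in bijection with the rays $\rho\in\sigma(1)$, and taking closures in $\tX[\D]$ gives invariant prime divisors $\widetilde E_\rho$ dominating $Y$; conversely any invariant divisor dominating $Y$ restricts over $\eta$ to one of these, which settles (i). In the second case I would localise $Y$ at $Z$, obtaining a DVR $\OO_{Y,Z}$; over it $\mathcal A[\D]$ depends only on the single polyhedron $\D_Z$, and by the toroidal construction of \cite{KKMS73} this local model is the affine toric variety attached to the cone $\pos\bigl((\D_Z\times\{1\})\cup(\sigma\times\{0\})\bigr)\subseteq N_\QQ\oplus\QQ$ over the polyhedron $\D_Z$. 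Its invariant prime divisors correspond to the rays of this cone: the rays at height $0$ recover the $\widetilde E_\rho$, while the rays through the primitive points $\mu(v)\cdot(v,1)$, one for each vertex $v\in\D_Z^{(0)}$, give exactly the divisors $\widetilde D_{Z,v}$ over $Z$, with the stated multiplicity $\mu(v)$. This settles (ii).

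For the non-exceptional divisors I would use that $r$ is induced by the inclusion of global sections $A[\D]=\bigoplus_u H^0(Y,\OO(\D(u)))\chi^u$, so an invariant prime divisor of $\tX[\D]$ is contracted by $r$ precisely when its homogeneous valuation fails to be a divisorial valuation of $A[\D]$, i.e.\ when there are too few global sections in the relevant degrees to detect it in codimension one. The plan is to translate this into positivity of the evaluated divisors: for $\widetilde E_\rho$ the relevant degrees are $u\in\relint(\sigma^\vee\cap\rho^\perp)$ and the divisor survives exactly when $\D(u)$ is big there, i.e.\ $\rho\in\D^\times$; for $\widetilde D_{Z,v}$ the relevant degrees fill the interior of the normal cone $\mathcal N(\Delta_Z,v)$ and survival is equivalent to bigness of $\D(u)|_Z$ there, i.e.\ $v\in\D^\times_Z$.

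I expect this last translation to be the main obstacle. The bijections (i) and (ii) are in essence a local toric computation over $Y$, but deciding which invariant divisors survive under the proper birational contraction $r$ requires controlling the images $r(\widetilde E_\rho)$ and $r(\widetilde D_{Z,v})$ and showing that such an image drops in codimension exactly when the associated linear systems cease to be big. Making the chain ``non-exceptional $\iff$ big $\iff$ extremal'' precise, and in particular verifying that bigness on the interior of the relevant cone is the exact threshold, is the technical heart of the argument; the definitions of $\D^\times$ and $\D^\times_Z$ are tailored to encode precisely this.
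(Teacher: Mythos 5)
First, a point of comparison: the paper itself contains no proof of this statement --- it is quoted verbatim from Prop.~3.13 of [PeSu08] --- so your attempt can only be measured against what a complete argument requires. Your treatment of the two bijections (i) and (ii) via homogeneous divisorial valuations and local toric models (generic fibre $\spec K(Y)[\sigma^\vee_M]$ for divisors dominating $Y$; the cone $\pos\bigl((\D_Z\times\{1\})\cup(\sigma\times\{0\})\bigr)$ over the DVR $\OO_{Y,Z}$ for divisors over $Z$) is essentially the argument of the cited reference, and the identification of the primitive generator $\mu(v)\cdot(v,1)$ is correct. Two remarks here. Your dichotomy ``$\nu_D|_{K(Y)}=0$ or $\nu_D|_{K(Y)}$ is a multiple of $\ord_Z$ for a prime divisor $Z\subseteq Y$'' needs justification: a priori $\nu_D|_{K(Y)}$ could be a divisorial valuation of $K(Y)$ centred in codimension $\geq 2$; this is excluded because the fibres of $q:\tX[\D]\rightarrow Y$ are $n$-dimensional toric bouquets (AH05, Section 7), so the image of a divisor cannot drop below codimension one in $Y$. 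Also, your geometric assignment ($\widetilde{E}_\rho$ dominates $Y$, $\widetilde{D}_{Z,v}$ lies over $Z$) is the correct one; note it silently corrects the paper's statement, in which the adjectives ``vertical'' and ``horizontal'' are swapped relative to the paper's own definitions in Section 1.

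The genuine gap is the third assertion, and you say so yourself: you reduce ``non-exceptional'' to ``bigness of $\D(u)$ (resp.\ $\D(u)|_Z$) on the relevant cone'' and then declare this equivalence to be ``the technical heart'' without supplying it. That equivalence \emph{is} the theorem; the bijections (i) and (ii) are routine toric bookkeeping, while part (iii) is what the paper actually uses later (Theorem~\ref{divclass}, Corollary~\ref{sec:cor-Q-fac}, the discrepancy formulas). What is missing, concretely: the centre of $\nu_\rho$ on $X=\spec A[\D]$ is the homogeneous prime $\bigoplus_{\langle u,n_\rho\rangle>0}H^0(Y,\OO(\D(u)))\chi^u$, so $\widetilde{E}_\rho$ is non-exceptional iff the quotient ring $\bigoplus_{u\in\sigma^\vee_M\cap\rho^\perp}H^0(Y,\OO(\D(u)))\chi^u$ has dimension $\dim X-1=(n-1)+\dim Y$. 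Since $\sigma^\vee\cap\rho^\perp$ is a facet, this holds iff the Iitaka dimension of $\D(u)$ equals $\dim Y$ for $u\in\relint(\sigma^\vee\cap\rho^\perp)$, i.e.\ (semiampleness being given by properness of $\D$) iff $\D(u)$ is big there --- and one must also check that this Iitaka dimension is constant on the relative interior of each cone of linearity, so that ``big for one $u$'' and ``big for all $u$'' in $\relint(\sigma^\vee\cap\rho^\perp)$ agree. The analogous computation for $\widetilde{D}_{Z,v}$ identifies the quotient by the centre ideal with sections whose restriction to $Z$ does not vanish, in degrees $u$ in the normal cone $\mathcal{N}(\Delta_Z,v)$, and bigness of $\D(u)|_Z$ is again exactly the codimension-one condition. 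None of this is carried out in your proposal, so as it stands it proves (i) and (ii) but only restates (iii).
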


For a semi-invariant function $f \cdot \chi^u$ the corresponding
invariant principal divisor on $X[\D]$ is
\begin{equation}
  \label{eq:principal-divisor}
  \sum_{Z,v} \mu(v)(\bangle{u,v} + \ord f) \cdot D_{Z,v} + \sum_\rho \bangle{u,n_\rho} \cdot E_\rho.
\end{equation}
Hence, for the pullback of a prime divisor $Z$ on $Y$ to $\tX[\D]$ or $X[\D]$, respectively,  we obtain
\[q^*Z = \sum_{v \in \D_Z^{(0)}} \mu(v) \cdot \widetilde{D}_{Z,v}, \quad \pi^*Z = \sum_{v \in \D_Z^\times} \mu(v) \cdot D_{Z,v}.\]

\section{Toroidal desingularization}
\label{sec:can-desing}

The combinatorial description of affine T-varieties in Section \ref{preliminaries} is not unique. The following Lemma is a specialization of Corollary 8.12 in \cite{MR2207875}. For the convenience of the reader, we provide a short argument.

\begin{lemma}\label{proj}
  Let $\DD$ be a proper polyhedral divisor on a normal variety
  $Y$. Then for any projective birational morphism
  $\psi:\tY\rightarrow Y$ the variety $X[\DD]$ is equivariantly
  isomorphic to $X[\psi^*\DD]$.
\end{lemma}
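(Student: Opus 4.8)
The plan is to show that the two $M$-graded algebras $A[\DD]$ and $A[\psi^*\DD]$ actually coincide as graded subalgebras of $\bigoplus_u \KK(Y)\cdot\chi^u=\bigoplus_u \KK(\tY)\cdot\chi^u$ (note $\KK(Y)=\KK(\tY)$, as $\psi$ is birational). Once this identity of graded algebras is in hand, applying $\spec$ produces an isomorphism $X[\psi^*\DD]\cong X[\DD]$ that respects the $M$-grading, hence is $T$-equivariant, which is exactly the assertion. Since $A[\DD]=\bigoplus_{u\in\sigma^\vee_M} H^0(Y,\OO_Y(\DD(u)))\cdot\chi^u$ and likewise $A[\psi^*\DD]=\bigoplus_{u\in\sigma^\vee_M} H^0(\tY,\OO_{\tY}((\psi^*\DD)(u)))\cdot\chi^u$, and since the pullback of polyhedral divisors is defined so that evaluation commutes with it, $(\psi^*\DD)(u)=\psi^*(\DD(u))$, the whole statement reduces to the single equality
\[
H^0\bigl(\tY,\OO_{\tY}(\psi^*\DD(u))\bigr)=H^0\bigl(Y,\OO_Y(\DD(u))\bigr)\qquad\text{for every }u\in\sigma^\vee_M,
\]
both sides being viewed inside $\KK(Y)$.

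First I would record two inputs. Properness condition (1) guarantees that $\DD(u)$ has a base-point-free multiple, so $\DD(u)$ is $\QQ$-Cartier; thus $\psi^*\DD(u)$ is a well-defined $\QQ$-Cartier $\QQ$-divisor and $\psi^*$ preserves effectivity. Moreover, since $Y$ is normal and $\psi$ is proper birational, $\psi$ is an isomorphism over an open subset $U\subseteq Y$ whose complement has codimension at least $2$; consequently every prime divisor $Z$ on $Y$ has a unique strict transform $Z'$ on $\tY$, with $\ord_{Z'}f=\ord_Z f$ for all $f\in\KK(Y)$ and with the coefficient of $\psi^*\DD(u)$ along $Z'$ equal to the coefficient of $\DD(u)$ along $Z$. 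Finally, $\divi_{\tY}(f)=\psi^*\divi_Y(f)$ for every $f\in\KK(Y)^*$.

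The equality then follows from two inclusions. For ``$\supseteq$'', if $\divi_Y(f)+\DD(u)\geq 0$, then this effective divisor is $\QQ$-Cartier (being the sum of the Cartier divisor $\divi_Y f$ and the $\QQ$-Cartier divisor $\DD(u)$), so its pullback $\divi_{\tY}(f)+\psi^*\DD(u)$ is again effective; hence $f$ is a section on $\tY$. For ``$\subseteq$'', if $\divi_{\tY}(f)+\psi^*\DD(u)\geq 0$, then reading off this inequality along the strict transforms $Z'$ and using $\ord_{Z'}f=\ord_Z f$ together with the matching of coefficients gives $\ord_Z f+(\text{coeff of }\DD(u)\text{ at }Z)\geq 0$ for every prime divisor $Z$ of $Y$, i.e.\ $\divi_Y(f)+\DD(u)\geq 0$; hence $f$ is a section on $Y$. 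Summing over $u\in\sigma^\vee_M$ yields $A[\psi^*\DD]=A[\DD]$ and the desired equivariant isomorphism. One should also observe that $\psi^*\DD$ is again proper, since base-point-freeness and bigness are both preserved under the birational pullback $\psi^*$, so that $X[\psi^*\DD]$ is legitimately defined.

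I expect the main obstacle to be the exceptional locus: a priori the conditions on $\tY$ impose extra inequalities along the $\psi$-exceptional prime divisors that have no counterpart on $Y$. The point that dissolves this difficulty is precisely the $\QQ$-Cartier property coming from properness (1) --- it is what allows one to pull back the effective divisor $\divi_Y(f)+\DD(u)$ and thereby conclude that these exceptional inequalities are \emph{automatically} satisfied --- combined with the fact that, because $Y$ is normal, the image of the exceptional locus has codimension at least $2$, so the strict transforms already detect the whole of the condition on $Y$.
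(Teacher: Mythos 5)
Your proof is correct, and it reaches the key equality $H^0(Y,\OO_Y(\DD(u)))=H^0(\tY,\OO_{\tY}(\psi^*\DD(u)))$ by a genuinely different route than the paper. The paper's argument is sheaf-theoretic: it fixes $r$ so that $r\DD(u)$ is Cartier, invokes Zariski's main theorem ($\psi_*\OO_{\tY}=\OO_Y$) together with the projection formula to identify $H^0(\tY,\OO_{\tY}(\psi^*r\DD(u)))$ with $H^0(Y,\OO_Y(r\DD(u)))$, and then removes the multiple $r$ via the characterization $H^0(Y,\OO_Y(\DD(u)))=\{f\in\KK(Y): f^r\in H^0(Y,\OO_Y(r\DD(u)))\}$ (and likewise on $\tY$), which is how it disposes of the fractional coefficients. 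You instead argue valuation by valuation: one inclusion follows because the pullback of the effective $\QQ$-Cartier divisor $\divi_Y(f)+\DD(u)$ (it is $\QQ$-Cartier precisely thanks to properness condition (1)) remains effective, and the reverse inclusion follows by reading the inequality on $\tY$ only along strict transforms, using that a proper birational morphism onto a normal variety is an isomorphism off a closed subset of codimension at least $2$, so orders of $f$ and coefficients of the pulled-back divisor along strict transforms agree with those on $Y$. Both proofs hinge on the same two inputs ($\QQ$-Cartierness of $\DD(u)$ and birationality plus normality), but yours is more elementary --- it avoids the projection formula and the $r$-th power trick --- and it makes explicit the geometric point that exceptional divisors impose no extra conditions on sections; you also check that $(\psi^*\DD)(u)=\psi^*(\DD(u))$ and that $\psi^*\DD$ is again proper, points the paper leaves implicit (they are contained in Altmann--Hausen's treatment of pullbacks, Corollary 8.12 of which this lemma specializes).
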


\begin{proof} 
  We only need to show that
$$H^0(Y,\OO_Y(\DD(u)))\simeq H^0(\tY,\OO_\tY(\psi^*\DD(u))),\mbox{ for all }u\in\sigma^\vee_M\,.$$
We let $r$ be such that $r\DD(u)$ is Cartier $\forall
u\in\sigma_M^\vee$.
By Zariski's main theorem $\psi_*\OO_\tY=\OO_Y$ and by the projection
formula, for all $u\in\sigma_M^\vee$ we have
\begin{align*}
  H^0(Y,\OO_Y(\DD(u)))&\simeq\big\{f\in \KK(\tY): f^r\in
  H^0(\tY,\OO_\tY(\psi^*r\DD(u)))\big\}=H^0(\tY,\OO_\tY(\psi^*\DD(u)))\,.
\end{align*}
\end{proof}

In the previous Lemma, $\tX=\tX[\DD]$ is not equivariantly isomorphic
to $\tX[\psi^*\DD]$, unless $\psi$ is an isomorphism.

\begin{definition} \label{support}
We define the \emph{support} of a proper polyhedral divisor as 
$$\supp\DD= \left\{Z\mbox{ prime divisor} \mid \D_Z\neq \sigma \mbox{ or }\D_Z^\times \neq \D^{(0)}_Z\right\}\,.$$
We say that $\DD$ is an
\begin{enumerate}
\item  \emph{SNC polyhedral divisor} if $\DD$ is proper, $Y$ is smooth, and $\supp\DD$ is a simple normal crossing (SNC) divisor,
\item \emph{completely ample} if $\D(u)$ is ample for \emph{every} $u
  \in \relint\sigma^\vee$. 
\end{enumerate}
\end{definition}

\begin{remark}
  The above notion of complete ampleness has the following geometric interpretations. 
$\D$ is completely ample iff $Y$ equals the Chow quotient $X[\D]/\!/T$
and $X[\D]$ has a trivial GIT chamber decomposition, i.e. there is exactly one GIT quotient of expected dimension.
Hence, the existence of a completely ample polyhedral
 divisor is a quite restrictive condition for a T-variety.
\end{remark}

In the case of complexity one i.e., when $Y$ is a curve, any proper
polyhedral divisor is SNC and completely ample.


\begin{corollary}
For any  T-variety $X$ there exists an SNC polyhedral divisor on a smooth variety $Y$ such that $X=X[\DD]$.
\end{corollary}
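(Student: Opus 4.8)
The plan is to produce, from an arbitrary proper polyhedral divisor $\DD$ on some normal semiprojective $Y$ with $X = X[\DD]$, a new SNC polyhedral divisor describing the same variety. The existence of \emph{some} proper $\DD$ is guaranteed by Theorem 3.4 of \cite{MR2207875}, as recalled in the preliminaries, so I may start from there. The main tool is Lemma~\ref{proj}: for any projective birational $\psi:\tY\rightarrow Y$ we have an equivariant isomorphism $X[\DD]\simeq X[\psi^*\DD]$. Thus it suffices to find a projective birational modification $\psi:\tY\rightarrow Y$ such that $\psi^*\DD$ is an SNC polyhedral divisor, i.e. such that $\tY$ is smooth and $\supp(\psi^*\DD)$ is a simple normal crossing divisor.

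First I would invoke resolution of singularities (available since $\KK$ has characteristic $0$): there is a projective birational morphism $\psi:\tY\rightarrow Y$ with $\tY$ smooth such that the total transform of $\supp\DD$, together with the exceptional locus of $\psi$, is a simple normal crossing divisor. This is the standard embedded/log resolution statement applied to the reduced divisor $\supp\DD\subseteq Y$. I then set $\DD' := \psi^*\DD$ on $\tY$. By Lemma~\ref{proj}, $X[\DD']\simeq X[\DD]=X$, so it remains only to verify that $\DD'$ is an SNC polyhedral divisor in the sense of Definition~\ref{support}: it is proper (this is Corollary 8.12 / the pullback part of the Altmann--Hausen machinery that underlies Lemma~\ref{proj}, which already presumes $\psi^*\DD$ is a proper polyhedral divisor), $\tY$ is smooth by construction, and $\supp\DD'$ is SNC.

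The step I expect to require the most care is controlling $\supp(\psi^*\DD)$. The pullback $\psi^*\DD$ assigns to each prime divisor $Z'$ on $\tY$ the coefficient $\Delta_{\psi(Z')}$, i.e. the slice of $\DD$ at the generic point of $Z'$; concretely the non-trivial coefficients of $\psi^*\DD$ live on the strict transforms of the prime divisors in $\supp\DD$ and on the $\psi$-exceptional divisors, and one must check $\supp(\psi^*\DD)$ is contained in the total transform of $\supp\DD$ union the exceptional locus. Since resolution can be arranged so that precisely this reduced divisor is SNC, this gives $\supp\DD'$ SNC. The subtle point is that the exceptional divisors may acquire a nontrivial polyhedral coefficient under pullback (via the slice/evaluation description), so one cannot simply say $\supp\DD'=\psi^{-1}(\supp\DD)$; one argues instead that every component of $\supp\DD'$ is a component of the SNC divisor produced by the resolution, which is enough.

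Finally, one records that all of this is compatible with the torus action: the isomorphism $X[\DD']\simeq X[\DD]$ from Lemma~\ref{proj} is equivariant, so the SNC polyhedral divisor $\DD'$ on the smooth $\tY$ describes $X$ as a T-variety, completing the proof. The whole argument is short precisely because Lemma~\ref{proj} reduces the geometric content to a single application of resolution of singularities together with the bookkeeping of where the pulled-back coefficients are supported.
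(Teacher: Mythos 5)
Your proposal is correct and follows essentially the same route as the paper: start from a proper polyhedral divisor guaranteed by Theorem 3.4 of \cite{MR2207875}, take a (projective) resolution of singularities so that the support of the pulled-back divisor becomes SNC, and conclude via Lemma~\ref{proj}. Your extra care about exceptional divisors acquiring nontrivial coefficients, and the paper's appeal to Chow's Lemma to ensure semiprojectivity of the resolved base, are only minor bookkeeping differences within the same argument.
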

\begin{proof}
Let $\DD'$ be proper polyhedral divisor on a normal variety $Y'$ such that $X=\spec X[\DD']$. Let $\psi:Y\rightarrow Y'$ be a resolution of singularities of $Y$ such that $\supp\psi^*\DD'$ is SNC. By Chow Lemma we can assume that $Y$ is semiprojective. By Lemma \ref{proj}, $\DD=\psi^*\DD'$ is an SNC polyhedral divisor such that $X=X[\DD]$.
\end{proof}

Now we elaborate a method to effectively compute an equivariant partial desingularization of an affine T-variety in terms of the combinatorial data $(Y,\DD)$. A key ingredient for our results is the following example (Cf. Example 3.19 in \cite{Lie08}).

\begin{example} \label{extor}
  Let $H_i$, $i\in\{1,\ldots,n\}$ be the coordinate hyperplanes in
  $Y=\AF^n$, and let $\DD$ be the SNC divisor on $Y$ given by
  $$\DD=\sum_{i=0}^{n}\Delta_i\cdot H_i,\quad \mbox{where } \Delta_i\in\pol_\sigma(N_{\QQ})\,.$$
  For the function field of $Y$ we have $K(Y)=\KK(t_1,\ldots, t_n)$ and we obtain
  \begin{align*}
    H^0(Y,\OO_Y(\DD(u)))&=\big\{f\in \KK(Y)\mid\divi(f)+\DD(u)\geq 0\big\} \\
    &=\left\{f\in \KK(Y)\mid\divi(f)+\sum_{i=1}^n \min_{v \in \Delta_i} \langle u,
      v \rangle \cdot H_i\geq 0\right\} \\
    &=\bigoplus_{r_i\geq -h_i(u)}\KK\cdot t_1^{r_1}\cdots t_n^{r_n}\,.
  \end{align*}
  Let $N'=N \times \ZZ^n$, $M'=M\times \ZZ^n$ and $\sigma'$ be the
  Cayley cone in $N'_{\QQ}$, i.e. the cone spanned by $(\sigma,\overline{0})$ and
  $(\Delta_i,e_i)$, $\forall i\in\{1,\ldots,n\}$, where $e_i$ is the
  $i$-th vector in the standard base of $\QQ^n$. A vector $(u,r)\in
  M'$ belongs to the dual cone $(\sigma')^\vee$ if and only if
  $u\in\sigma^\vee$ and $r_i\geq -h_i(u)$.

With these definitions we have
$$A[\DD]=\bigoplus_{u\in\sigma_M^\vee} H^0(Y,\OO_Y(\DD(u)))=\bigoplus_{(u,r) \in(\sigma')^\vee\cap M'}k\cdot t_1^{r_1}\cdots t_n^{r_n}\simeq k[(\sigma')^\vee\cap M']\,.$$

Hence $X[\DD]$ is isomorphic as an abstract variety to the toric variety with cone $\sigma'\subseteq N'_{\QQ}$. Since $Y$ is affine $\tX\simeq X$ is also a toric variety.
\end{example}

Recall \cite{KKMS73} that a variety $X$ is toroidal if for every $x\in X$ there is a formal neighborhood isomorphic to a formal neighborhood of a point in an affine toric variety.

\begin{proposition} \label{toroidal}
  Let $\DD=\sum_Z \Delta_Z\cdot Z$ be a proper polyhedral divisor on a
  semiprojective normal variety $Y$. If $\DD$ is SNC then
  $\tX=\tX[\DD]$ is a toroidal variety.
\end{proposition}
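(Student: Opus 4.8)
The plan is to verify the toroidal condition locally, working formally at an arbitrary point of $\tX[\DD]$ and reducing to the situation computed in Example~\ref{extor}. Since toroidality is a local condition, I would first use the affine morphism $q:\tX[\DD]\rightarrow Y$ together with the $\TT$-action to reduce to a neighborhood over a single point of $Y$. The crucial observation is that $\tX[\DD]=\SPEC_Y\mathcal{A}[\DD]$, so its local structure over $y\in Y$ is governed entirely by the slice $\DD_y$ and the local geometry of $Y$ at $y$. Because $\DD$ is SNC, $Y$ is smooth and $\supp\DD$ is a simple normal crossing divisor, so near any point $y\in Y$ the divisors $Z$ passing through $y$ look (formally or étale-locally) like coordinate hyperplanes in $\AF^n$.

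First I would fix a point $y\in Y$ and choose a system of local coordinates $t_1,\dots,t_n$ at $y$ so that the prime divisors $Z_1,\dots,Z_k$ of $\supp\DD$ passing through $y$ correspond to the coordinate hyperplanes $\{t_i=0\}$; this is exactly where the SNC hypothesis is used. Passing to the completion $\widehat{\OO}_{Y,y}\simeq\KK[[t_1,\dots,t_n]]$, the prime divisors not in $\supp\DD$ contribute trivial coefficients $\Delta_Z=\sigma$ and their extremal data is the full vertex set, so they do not affect the algebra $\mathcal{A}[\DD]$ in the formal neighborhood. Therefore the completed stalk of $\mathcal{A}[\DD]$ at $y$ is determined by the finitely many nontrivial slices $\Delta_{Z_1},\dots,\Delta_{Z_k}$ sitting on the coordinate hyperplanes.

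Next I would match this completed local picture with the toric algebra of Example~\ref{extor}. There, taking $Y=\AF^n$ with coordinate hyperplanes $H_i$ and coefficients $\Delta_i$, the algebra $A[\DD]$ was identified with the semigroup algebra $\KK[(\sigma')^\vee\cap M']$ of the Cayley cone $\sigma'$ spanned by $(\sigma,\overline 0)$ and the $(\Delta_i,e_i)$. The computation there is formal in the local coordinates $t_i$ and uses only the evaluation $\DD(u)=\sum_i\min_{v\in\Delta_i}\langle u,v\rangle\cdot H_i$, so it transports verbatim to the completed stalk at $y$ once we set $\Delta_i:=\Delta_{Z_i}$ (with $n$ replaced by $k$, padding by trivial coefficients as needed). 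Concretely I would argue that the completion of $\mathcal{A}[\DD]$ along the fiber over $y$ is isomorphic, as an $M$-graded $\KK[[t_1,\dots,t_k]]$-algebra, to the completion of the affine toric variety attached to the corresponding Cayley cone, and hence every point of the fiber $q^{-1}(y)$ has a formal neighborhood isomorphic to that of a point on an affine toric variety.

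The main obstacle I anticipate is making the reduction to coordinate hyperplanes fully rigorous at the level of completions rather than merely étale-locally: one must ensure that replacing $\OO_{Y,y}$ by its completion $\KK[[t_1,\dots,t_n]]$ genuinely reproduces the semigroup algebra computation of Example~\ref{extor}, including checking that the formal completion commutes with the graded direct sum defining $\mathcal{A}[\DD]$ and that the trivial-coefficient divisors really drop out. A secondary subtlety is bookkeeping the extremal-vertex data so that the correspondence with horizontal and vertical divisors (Theorem~\ref{th:divisor}) is respected, but this does not affect the purely local toroidal statement, which only needs the formal isomorphism of algebras. Once the formal identification with the Cayley-cone toric algebra is established at every $y\in Y$, the toroidal property of $\tX[\DD]$ follows immediately from the definition.
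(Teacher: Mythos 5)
Your proposal is correct and takes essentially the same approach as the paper's proof: both fix a point $y\in Y$, use smoothness of $Y$ and the SNC hypothesis to identify the divisors of $\supp\DD$ through $y$ with coordinate hyperplanes, and then identify the formal neighborhood of the fiber over $y$ with the formal neighborhood of the fiber over the origin for the toric model of Example~\ref{extor}. The paper packages this reduction by introducing the auxiliary polyhedral divisor $\DD_y'=\sum_{Z\in S_y}\Delta_Z\cdot H_{j(Z)}$ on $\AF^n$, which is exactly the Cayley-cone comparison you describe.
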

\begin{proof}
  For $y\in Y$ we consider the fiber $X_y$ over $y$ for the morphism
  $\varphi:\tX\rightarrow Y$. We let also $\mathfrak{U}_y$ be a formal
  neighborhood of $X_y$.

  We let $n=\dim Y$ and $$S_y=\{Z \mbox{ prime divisor } \mid y\in
  Z\mbox{ and } \Delta_Z\neq\sigma\}\,.$$ Since $\supp\DD$ is SNC, we
  have that $\operatorname{card}(S_y)\leq n$. Letting
  $j:S_y\rightarrow \{1,\ldots,n\}$ be any injective function, we
  consider the smooth $\sigma$-polyhedral divisor
  $$\DD_y'= \sum_{Z \in S_y} \Delta_Z\cdot H_{j(Z)}, \quad \mbox{on} \quad \AF^n\,.$$

  Since $Y$ is smooth, $\mathfrak{U}_y$ is isomorphic to a formal
  neighborhood of the fiber over zero for the canonical morphism
  $\pi':\tX[\DD_y']=\SPEC_{\AF^n} \tA[\DD_y']\rightarrow
  \AF^n$. Finally, Example \ref{extor} shows that $\tX[\DD_y']$ is
  toric for all $y$ and so $X$ is toroidal. This completes the proof.
\end{proof}

\begin{remark}
Proposition \ref{toroidal} holds in the less restrictive case where only 
$$\{Z \mbox{ prime divisor}\mid \DD_Z\neq\sigma\}$$
is SNC. The definition of $\supp\DD$ given in Definition \ref{support}
will be useful in Section \ref{sec:canonical-div}.
\end{remark}

Since the morphism $\varphi:\tX[\DD]\rightarrow X[\DD]$ is proper and
birational, to obtain a desingularization of $X$ it is enough to have
a desingularization of $\tX$. If further $\DD$ is SNC, then $\tX$ is
toroidal and there exists a toric desingularization. Indeed, in this
case we can give a description in terms of polyhedral divisors of the
desingularization of $\tX$.

\begin{construction}
  \label{sec:constr-desing}
  Let $\D$ be an SNC polyhedral divisor on $Y$ and $\{U_i\}_{i\in I}$
  an open affine covering of $Y$ such that the components of $U_i \cap
  \supp \D$ meet in (at least) a common point $y_i$ for every $i \in
  I$. It is not hard to see, that the polyhedral divisors
  $\D_i:=\D|_{U_i}$ give an open affine covering of $\{X[\D_i]\}_i$ of
  $\tX[\D]$. Moreover, the singularities of $X[\D_i]$ are toric and
  correspond to the Cayley cone from example~\ref{extor}. Let $S_i$ be
  a set of simply normal crossing prime divisors having intersection $y_i \in
  U_i$ and containing the prime divisors of $U_i \cap \supp \D$. We
  set $N'=N \times \ZZ^{S_i}$ coming with canonical basis elements
  $e_Z$.

  The Cayley cone $\sigma_i$ is now the cone generated by $\Delta_Z
  \times \{e_Z\}$ for $Z \in S_i$ and $\sigma \times \{0\}$. The
  intersections of $\sigma_i$ with the linear subspaces $N_Z:=N_\QQ
  \times \langle e_Z \rangle$ results in cones isomorphic to
\[\sigma_Z := \conv\left(\sigma \times \{0\} + \Delta_Z \times \{1\}\right) \subset
N_\QQ \times \QQ.\] and the intersection with the affine subspace $N
\times \{e_Z\}$ results in an polyhedron isomorphic to $\Delta_Z$. We
will use these isomorphisms below.

We fix a toric projective desingularization $\Sigma_Z$ of $\sigma_Z$,
such that the induced desingularization of $\sigma=\sigma_Z \times
N_\QQ$ are the same. Now for every Cayley cone $\sigma_i$ there exists
a desingularization $\Sigma_i$ which is spanned by the
desingularizations of the facets $\{\sigma_Z\}_{Z \in S_i}$. For a
cone $\tau \in \Sigma_i$ we define the polyhedral divisor $\D_i^\tau =
\sum_{Z \in S_i} \left(\tau \cap \Delta_z\right) \cdot Z$. An open
affine covering of a desingularization of $X[\D]$ is now given by
\[\left\{ X[\D^\tau_i] \mid i\in I,\; \tau \in \Sigma_i \right\}.\]

Note, that the polyhedral divisors $\D^\tau_i$ form a so-called
divisorial fan, which is a non-affine generalization of a polyhedral
divisor similar to the passage from cones to fans in toric
geometry. For details see~\cite{AHS08}.
\end{construction}

\section{Higher direct images sheaves}
\label{sec:rational-sing}

In this section we apply the partial desingularization
$\varphi:\tX[\DD]\rightarrow X[\DD]$ to compute the higher direct
images of the structure sheaf of any desingularization $W$ of
$X[\DD]$. This allows us to provide information about the
singularities of $X$ in terms of the combinatorial data $(Y,\DD)$. We
recall the following notion.

\begin{definition}
  A variety $X$ has rational singularities if there exists a
  desingularization $\psi:W\rightarrow X$, such that
$$\psi_*\OO_W=\OO_X,\quad\mbox{and}\quad R^i\psi_*\OO_W=0, \quad \forall i>0\,.$$
\end{definition}

The sheaves $R^i\psi_*\OO_W$ are independent of the particular choice of a desingularization of $X$. The first condition $\psi_*\OO_W=\OO_X$ is equivalent to $X$ being normal.

The following well known lemma follows by applying the Leray spectral
sequence. For the convenience of the reader we provide a short
argument.
\begin{lemma} \label{rati}
Let $\varphi:\tX\rightarrow X$ be a proper surjective, birational morphism, and let $\psi:W\rightarrow X$ be a desingularization of $X$. If $\tX$ has only rational singularities, then
$$R^i\psi_*\OO_W=R^i\varphi_*\OO_{\tX}, \quad \forall i\geq0\,.$$
\end{lemma}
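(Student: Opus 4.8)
The plan is to compare the two desingularization-type morphisms via the Leray spectral sequence, using the hypothesis that $\tX$ has rational singularities to transfer the higher direct image computation from $\psi$ to $\varphi$. First I would choose a desingularization $\rho: W \rightarrow \tX$ of $\tX$, which exists since $\tX$ is a variety over a field of characteristic $0$. Because $\tX$ has only rational singularities, by definition we have $\rho_*\OO_W = \OO_{\tX}$ and $R^j\rho_*\OO_W = 0$ for all $j > 0$. I would like to take $\psi = \varphi \circ \rho$; the point is that $\varphi \circ \rho : W \rightarrow X$ is a proper birational morphism from a smooth variety, hence a desingularization of $X$, and since the sheaves $R^i\psi_*\OO_W$ are independent of the chosen desingularization (as noted just before the lemma), it suffices to prove the identity for this particular $\psi$.

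The core of the argument is the Leray (Grothendieck) spectral sequence for the composition $\varphi \circ \rho$:
\[
E_2^{i,j} = R^i\varphi_*\bigl(R^j\rho_*\OO_W\bigr) \;\Longrightarrow\; R^{i+j}(\varphi\circ\rho)_*\OO_W .
\]
The vanishing $R^j\rho_*\OO_W = 0$ for $j > 0$ forces the spectral sequence to degenerate: every row with $j > 0$ is zero, so only the row $j = 0$ survives. The surviving terms are $E_2^{i,0} = R^i\varphi_*(\rho_*\OO_W) = R^i\varphi_*\OO_{\tX}$, using $\rho_*\OO_W = \OO_{\tX}$. Degeneration then yields the edge isomorphisms
\[
R^i(\varphi\circ\rho)_*\OO_W \;\simeq\; R^i\varphi_*\OO_{\tX}, \quad \forall i \geq 0 .
\]
Combining this with the choice $\psi = \varphi \circ \rho$ gives precisely $R^i\psi_*\OO_W = R^i\varphi_*\OO_{\tX}$ for all $i \geq 0$, as claimed.

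The main point to be careful about is verifying that the chosen $\psi = \varphi \circ \rho$ is a legitimate desingularization of $X$, so that the independence-of-resolution remark applies and the statement for a general $\psi$ follows. This requires that $W$ be smooth (guaranteed by taking $\rho$ to be a resolution of $\tX$) and that $\varphi \circ \rho$ be proper and birational: properness follows since $\varphi$ is proper by hypothesis and $\rho$ is proper as a resolution, and birationality follows since both $\varphi$ and $\rho$ are birational. I expect the only genuine subtlety to be the bookkeeping in the degeneration step — confirming that a spectral sequence concentrated in a single row ($j = 0$) collapses with no nontrivial differentials and no extension problems, so that the abutment is read off directly from the $E_2^{i,0}$ terms. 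This is standard, but it is the step where one must invoke the rationality hypothesis on $\tX$ in an essential way.
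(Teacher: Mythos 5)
Your proof is correct and follows essentially the same route as the paper: factor the desingularization as $\psi=\varphi\circ\rho$ with $\rho$ a resolution of $\tX$, invoke the independence of the higher direct images from the chosen resolution, use rationality of $\tX$ to kill $R^j\rho_*\OO_W$ for $j>0$, and collapse a Leray spectral sequence onto its $j=0$ row. The only (immaterial) difference is packaging: you apply the relative Leray spectral sequence for the composition $\varphi\circ\rho$ directly, while the paper first reduces to $X$ affine and runs the same degeneration argument on the absolute cohomology groups $H^i(W,\OO_W)\simeq H^i(\tX,\OO_{\tX})$.
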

\begin{proof}
We may assume that the desingularization $\psi$ is such that $\psi=\varphi\circ\widetilde{\psi}$, where $\widetilde{\psi}:W\rightarrow\tX$ is a desingularization of $\tX$. The question is local on $X$, so we may assume that $X$ is affine. Then, by \cite[Ch. III, Prop. 8.5]{Har77} we have\footnote{As usual for a $A$-module $M$, $M^{\sim}$ denotes the associated sheaf on $X=\spec A$.}
$$R^i\psi_*\OO_W=H^i(W,\OO_W)^{\sim}\quad\mbox{and}\quad R^i\varphi_*\OO_{\tX}=H^i(\tX,\OO_{\tX})^{\sim},\quad \forall i\geq 0\,.$$
Since $\tX$ has rational singularities
$$\widetilde{\psi}_*\OO_W=\OO_{\tX},\quad\mbox{and}\quad R^i\widetilde{\psi}_*\OO_W=0, \quad \forall i>0\,.$$
By Leray spectral sequence for $(p,q)=(i,0)$ we have
$$H^i(W,\OO_W)=H^i(\tX,\widetilde{\psi}_*\OO_W)=H^i(\tX,\OO_{\tX}),\quad \forall i\geq 0\,,$$
proving the Lemma.
\end{proof}

In the following theorem for a T-variety $X=X[\DD]$ and a desingularization $\psi:W\rightarrow X$ we provide an expression for $R^i\psi_*\OO_Z$ in terms of the combinatorial data $(Y,\DD)$.

\begin{theorem} \label{Tdirect}
Let $X=X[\DD]$, where $\DD$ is an SNC polyhedral divisor on $Y$. If $\psi:W\rightarrow X$ is a desingularization, then for every $i\geq0$, the higher direct image $R^i\psi_*\OO_W$ is the sheaf associated to
$$\bigoplus_{u\in\sigma_M^\vee} H^i(Y,\OO(\DD(u)))$$
\end{theorem}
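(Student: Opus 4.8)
The plan is to reduce the computation of $R^i\psi_*\OO_W$ to a cohomology computation on the toroidal variety $\tX=\tX[\DD]$, and then push that down to $Y$ via the affine morphism $q:\tX\rightarrow Y$. The first step is to invoke Lemma~\ref{rati}: since $\DD$ is SNC, Proposition~\ref{toroidal} tells us that $\tX$ is toroidal, and toroidal (hence toric) singularities are rational. Therefore the hypotheses of Lemma~\ref{rati} are satisfied for the proper birational morphism $\varphi:\tX\rightarrow X$, giving $R^i\psi_*\OO_W=R^i\varphi_*\OO_{\tX}$ for all $i\geq 0$. This converts the problem about an arbitrary desingularization $W$ into a problem about the explicit combinatorial model $\tX$.

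The second step is to compute $R^i\varphi_*\OO_{\tX}$. Since the question is local on $X$ and $X$ is affine, by \cite[Ch.~III, Prop.~8.5]{Har77} the sheaf $R^i\varphi_*\OO_{\tX}$ is the sheaf associated to the $A[\DD]$-module $H^i(\tX,\OO_{\tX})$. So it remains to identify this cohomology group with $\bigoplus_{u\in\sigma_M^\vee} H^i(Y,\OO(\DD(u)))$. First I would recall that $\tX=\SPEC_Y\mathcal{A}[\DD]$, so the affine morphism $q:\tX\rightarrow Y$ satisfies $q_*\OO_{\tX}=\mathcal{A}[\DD]=\bigoplus_{u\in\sigma_M^\vee}\OO(\DD(u))\cdot\chi^u$ and, because $q$ is affine, $R^jq_*\OO_{\tX}=0$ for $j>0$. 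The Leray spectral sequence for $q$ then degenerates, yielding
$$
H^i(\tX,\OO_{\tX})\simeq H^i(Y,q_*\OO_{\tX})=H^i\Bigl(Y,\bigoplus_{u\in\sigma_M^\vee}\OO(\DD(u))\chi^u\Bigr).
$$
Since cohomology commutes with the direct sum (the grading is by the fixed lattice $M$ and cohomology is $M$-graded, each graded piece being finitely handled), this equals $\bigoplus_{u\in\sigma_M^\vee}H^i(Y,\OO(\DD(u)))$, which is exactly the claimed module.

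The main obstacle I expect is the justification that $R^jq_*\OO_{\tX}=0$ for $j>0$ and that the relevant spectral sequence degenerates cleanly; although $q$ is affine, one must be careful that $Y$ is only semiprojective (not affine), so the higher cohomology $H^i(Y,-)$ genuinely survives and is what carries the geometric content. A secondary technical point is commuting cohomology with the infinite direct sum over $u\in\sigma_M^\vee$: this is legitimate because the $M$-grading on $\tX$ induces an $M$-grading on each $H^i(\tX,\OO_{\tX})$, so the computation can be performed weight by weight, and in weight $u$ the relevant sheaf on $Y$ is precisely $\OO(\DD(u))$. Finally, I would note that the identification is compatible with the $M$-grading and the $A[\DD]$-module structure, so the isomorphism of modules indeed induces an isomorphism of the associated sheaves on $X$, completing the proof.
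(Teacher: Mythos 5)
Your proposal is correct and follows essentially the same route as the paper's proof: reduce to the toroidal model $\tX[\DD]$ via Proposition~\ref{toroidal} and Lemma~\ref{rati}, use affineness of $X$ and \cite[Ch.~III, Prop.~8.5]{Har77} to identify $R^i\varphi_*\OO_{\tX}$ with $H^i(\tX,\OO_{\tX})^{\sim}$, and then compute $H^i(\tX,\OO_{\tX})=\bigoplus_{u\in\sigma_M^\vee}H^i(Y,\OO_Y(\DD(u)))$ through the affine morphism $q:\tX\rightarrow Y$. The only difference is cosmetic: where the paper simply cites \cite[Ch.~III, Ex.~4.1]{Har77} for this last identification, you spell out its proof (vanishing of $R^jq_*$ for affine $q$ plus Leray degeneration) and the commutation of cohomology with the direct sum.
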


\begin{proof}
  Let $\psi:W\rightarrow X$ be a desingularization of $X$. Consider
  the proper birational morphism $\varphi:\tX:=\tX[\DD]\rightarrow
  X$. By Lemma \ref{toroidal} $\tX$ is toroidal, thus it has only
  toric singularities which are rational \cite{Dai02}. By Lemma
  \ref{rati} we have
  $$R^i\psi_*\OO_W=R^i\varphi_*\OO_{\tX}, \quad \forall i\geq0\,.$$
  Since $X$ is affine, we have
  $$R^i\varphi_*\OO_{\tX}=H^i(\tX,\OO_{\tX})^{\sim}, \quad \forall i\geq0\,,$$
  see \cite[Ch. III, Prop. 8.5]{Har77}. Letting
  $\tA=\tA[\DD]=\bigoplus_{u\in\sigma_M^\vee} \OO_Y(\DD(u))$ we let
  $\pi$ be the affine morphism $\pi:\tX=\SPEC_Y\tA\rightarrow
  Y$. Since the morphism $\pi$ is affine, we have
  $$H^i(\tX,\OO_{\tX})= H^i(Y,\tA)= \bigoplus_{u\in\sigma_M^\vee}
  H^i(Y,\OO_Y(\DD(u))), \quad \forall i\geq0$$ by \cite[Ch III,
  Ex. 4.1]{Har77}, proving the Theorem.
\end{proof}

As an immediate consequence of Theorem \ref{Tdirect}, in the following theorem, we characterize T-varieties having rational singularities.

\begin{theorem} \label{Trat}
Let $X=X[\DD]$, where $\DD$ is an SNC polyhedral divisor on $Y$. Then $X$ has rational singularities if and only if for every $u\in\sigma_M^\vee$
$$H^i(Y,\OO_Y(\DD(u)))=0, \quad \forall i\in \{1,\ldots,\dim Y\}\,.$$
\end{theorem}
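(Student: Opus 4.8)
The plan is to derive Theorem \ref{Trat} directly from Theorem \ref{Tdirect} by translating the cohomological vanishing condition for rational singularities into the language of the combinatorial data. Since $\DD$ is SNC, Theorem \ref{Tdirect} identifies $R^i\psi_*\OO_W$ with the sheaf on $X$ associated to the $M$-graded module $\bigoplus_{u\in\sigma_M^\vee} H^i(Y,\OO_Y(\DD(u)))$. Because $X=\spec A[\DD]$ is affine, a coherent sheaf on $X$ is zero if and only if its module of global sections vanishes, and the passage $M \mapsto M^\sim$ is faithful on finitely generated modules. Thus $R^i\psi_*\OO_W = 0$ holds exactly when the graded module $\bigoplus_{u} H^i(Y,\OO_Y(\DD(u)))$ is the zero module, i.e. when $H^i(Y,\OO_Y(\DD(u))) = 0$ for every $u\in\sigma^\vee_M$.

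Next I would assemble this over the relevant range of $i$. By definition $X$ has rational singularities if and only if $\psi_*\OO_W = \OO_X$ and $R^i\psi_*\OO_W = 0$ for all $i > 0$. The normality condition $\psi_*\OO_W=\OO_X$ is automatic here, as remarked after the definition, since all our varieties are normal by the standing convention. So rationality reduces to the vanishing of $R^i\psi_*\OO_W$ for all $i \geq 1$, which by the previous paragraph is equivalent to $H^i(Y,\OO_Y(\DD(u)))=0$ for all $i\geq 1$ and all $u\in\sigma^\vee_M$. The restriction of the index range to $i\in\{1,\dots,\dim Y\}$ in the statement is then justified by Grothendieck vanishing: since $Y$ has dimension $\dim Y$, the cohomology $H^i(Y,\OO_Y(\DD(u)))$ vanishes automatically for $i > \dim Y$, so those indices impose no condition.

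The only point requiring slight care is the transition between ``the associated sheaf vanishes'' and ``each graded piece vanishes.'' I would make this clean by noting that $\bigoplus_{u\in\sigma^\vee_M} H^i(Y,\OO_Y(\DD(u)))$ is precisely the $A[\DD]$-module whose sheafification is $R^i\psi_*\OO_W$, and that an $A[\DD]$-module $\bigoplus_u H_u$ with $H_u$ the $u$-graded piece has zero sheafification on the affine scheme $\spec A[\DD]$ if and only if it is the zero module, hence if and only if every $H_u = 0$; this handles both directions of the ``if and only if'' simultaneously. I expect the main (mild) obstacle to be stating this equivalence at the level of graded modules cleanly rather than any real mathematical difficulty, since the substantive content has already been absorbed into Theorem \ref{Tdirect}. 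The proof is therefore short: invoke Theorem \ref{Tdirect}, use affineness to pass from sheaves to modules, apply the definition of rational singularities together with the automatic normality, and truncate the index range by Grothendieck vanishing.
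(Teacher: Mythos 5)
Your proposal is correct and follows essentially the same route as the paper's proof: invoke Theorem \ref{Tdirect}, use affineness of $X$ to reduce vanishing of the sheaf $R^i\psi_*\OO_W$ to vanishing of the graded module (hence of each graded piece $H^i(Y,\OO_Y(\DD(u)))$), note that normality makes $\psi_*\OO_W=\OO_X$ automatic, and truncate the index range by Grothendieck vanishing. Your extra care about the faithfulness of $M\mapsto M^\sim$ on an affine scheme is a slight sharpening of a step the paper treats implicitly, but it is the same argument.
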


\begin{proof}
Since $X$ is normal, by Theorem \ref{Tdirect} we only have to prove that
$$\bigoplus_{u\in\sigma_M^\vee} H^i(Y,\OO_Y(\DD(u)))=0,\quad \forall i>0$$
This direct sum is trivial if and only if each summand is. Hence $X$ has rational singularities if and only if $H^i(Y,\OO_Y(\DD(u)))=0$, for all $i>0$ and all $u\in\sigma_M^\vee$.

Finally, $H^i(Y,\FF)=0$, for all $i>\dim Y$ and for any sheaf $\FF$, see \cite[Ch III, Th. 2.7]{Har77}. Now the Lemma follows.
\end{proof}

In particular, we have the following corollary.

\begin{corollary} \label{acyc}
Let $X=X[\DD]$ for some SNC polyhedral divisor $\DD$ on $Y$. If $X$ has only rational singularities, then the structure sheaf $\OO_Y$ is acyclic i.e., $H^i(Y,\OO_Y)=0$ for all $i>0$.
\end{corollary}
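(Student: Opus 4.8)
The plan is to derive the statement as an immediate consequence of Theorem~\ref{Trat} by specializing to the trivial character $u=0$. First I would observe that since $\sigma$ is a pointed cone, its dual $\sigma^\vee$ is full-dimensional in $M_\QQ$, so in particular $0\in\sigma^\vee_M$; thus $u=0$ is a legitimate index appearing in the hypothesis of Theorem~\ref{Trat}. The whole point is that the trivial character is always present regardless of the geometry of $\DD$.

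Next I would compute the evaluation $\DD(0)$. By the defining formula $\DD(u)=\sum_Z \min_{v\in\Delta_Z}\bangle{u,v}\cdot Z$, setting $u=0$ gives $\bangle{0,v}=0$ for every $v$, so $\DD(0)=\sum_Z 0\cdot Z = 0$, the zero divisor on $Y$. Consequently $\OO_Y(\DD(0))=\OO_Y(0)=\OO_Y$, the structure sheaf itself.

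Now I would invoke the hypothesis that $X$ has rational singularities. By Theorem~\ref{Trat}, this forces
\[
H^i(Y,\OO_Y(\DD(u)))=0\quad\text{for all }i\in\{1,\ldots,\dim Y\}\text{ and all }u\in\sigma^\vee_M.
\]
Applying this with the particular choice $u=0$ and using $\OO_Y(\DD(0))=\OO_Y$ yields $H^i(Y,\OO_Y)=0$ for all $i\in\{1,\ldots,\dim Y\}$. Since $H^i(Y,\FF)=0$ automatically holds for $i>\dim Y$ for any sheaf $\FF$ by Grothendieck vanishing (\cite[Ch.~III, Th.~2.7]{Har77}), we conclude $H^i(Y,\OO_Y)=0$ for all $i>0$, which is exactly the acyclicity of $\OO_Y$.

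This argument is entirely routine and presents no genuine obstacle; the only point requiring a moment's care is confirming that $0$ lies in $\sigma^\vee_M$, which is guaranteed by the standing assumption that $\sigma$ is a pointed convex polyhedral cone (so $\sigma^\vee$ contains a neighborhood of the origin and in particular the lattice point $0$). Everything else is a direct substitution into the already-established Theorem~\ref{Trat}.
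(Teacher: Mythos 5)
Your proposal is correct and takes essentially the same route as the paper, which proves the corollary in one line as the ``only if'' part of Theorem~\ref{Trat} applied to $u=0$; your write-up merely makes explicit the observations $\DD(0)=0$, hence $\OO_Y(\DD(0))=\OO_Y$, and the Grothendieck vanishing for $i>\dim Y$ (which is already built into the statement of Theorem~\ref{Trat}). One small blemish: your parenthetical justification that ``$\sigma^\vee$ contains a neighborhood of the origin'' is false for any proper cone, but it is also unnecessary, since $0$ lies in every convex cone and hence $0\in\sigma^\vee_M$ trivially.
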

\begin{proof}
This is the ``only if'' part of Theorem \ref{Trat} for $u=0$.
\end{proof}

Recall that a local ring is Cohen-Macaulay if its Krull dimension is equal to its depth. A variety is Cohen-Macaulay if all its local rings are. The following lemma is well known, see for instance \cite[page 50]{KKMS73}.

\begin{lemma} \label{kempf} %
  Let $\psi:W\rightarrow X$ be a desingularization of $X$. Then $X$
  has rational singularities if and only if $X$ is Cohen-Macaulay and
  $\psi_*\omega_W\simeq\omega_X$ 
\end{lemma}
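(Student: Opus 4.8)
My plan is to run Grothendieck duality for the proper birational morphism $\psi:W\to X$. Since everything in sight is local on $X$, I may assume $X=\spec A$ is affine of dimension $d$; as $W$ is smooth and birational over $X$, also $\dim W = d$. Write $\omega_X^\bullet$ and $\omega_W^\bullet = \omega_W[d]$ for the dualizing complexes. Grothendieck duality provides a natural isomorphism in the derived category of coherent sheaves
$$R\psi_*\omega_W^\bullet \;\cong\; R\mathcal{H}om_{\OO_X}\big(R\psi_*\OO_W,\,\omega_X^\bullet\big).$$
The first step is to collapse the left-hand side using the Grauert--Riemenschneider vanishing theorem (this is exactly where the characteristic-zero hypothesis enters): $R^j\psi_*\omega_W = 0$ for all $j>0$, so $R\psi_*\omega_W^\bullet = (\psi_*\omega_W)[d]$ is concentrated in the single cohomological degree $-d$. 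Hence
$$(\psi_*\omega_W)[d] \;\cong\; R\mathcal{H}om_{\OO_X}\big(R\psi_*\OO_W,\,\omega_X^\bullet\big). \qquad (\star)$$
I will use throughout that normality of $X$ forces $\psi_*\OO_W=\OO_X$, so that $R\psi_*\OO_W$ has $\OO_X$ in degree $0$ and the sheaves $R^i\psi_*\OO_W$ in degrees $i>0$; thus $X$ has rational singularities precisely when $R\psi_*\OO_W=\OO_X$.

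For the forward implication I would assume rational singularities, so $R\psi_*\OO_W=\OO_X$. Then the right-hand side of $(\star)$ is $R\mathcal{H}om_{\OO_X}(\OO_X,\omega_X^\bullet)=\omega_X^\bullet$, and $(\star)$ shows $\omega_X^\bullet \cong (\psi_*\omega_W)[d]$ is quasi-isomorphic to a single coherent sheaf in degree $-d$. Concentration of the dualizing complex in one degree is equivalent to $X$ being Cohen--Macaulay, and in that case the dualizing sheaf is $\omega_X = \mathcal{H}^{-d}(\omega_X^\bullet)\cong \psi_*\omega_W$. This delivers both desired conclusions simultaneously.

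For the converse I would assume $X$ Cohen--Macaulay with $\psi_*\omega_W\cong\omega_X$. Cohen--Macaulayness gives $\omega_X^\bullet=\omega_X[d]$, so $(\star)$ becomes $\omega_X \cong R\mathcal{H}om_{\OO_X}(R\psi_*\OO_W,\,\omega_X)$. I then apply the duality functor $\mathbb{D}(-):=R\mathcal{H}om_{\OO_X}(-,\omega_X^\bullet)$ and invoke biduality $\mathbb{D}\mathbb{D}\cong\mathrm{id}$. Because $X$ is normal and Cohen--Macaulay, hence generically Gorenstein, the canonical sheaf satisfies $\mathcal{H}om_{\OO_X}(\omega_X,\omega_X)=\OO_X$ with all higher $\mathcal{E}xt$ vanishing, so $\mathbb{D}(\omega_X)=\OO_X[d]$. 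Rewriting the previous isomorphism as $\omega_X\cong\mathbb{D}(R\psi_*\OO_W)[-d]$, applying $\mathbb{D}$, and tracking the shifts yields $R\psi_*\OO_W[d]\cong\mathbb{D}(\omega_X)=\OO_X[d]$, hence $R\psi_*\OO_W=\OO_X$, i.e. $R^i\psi_*\OO_W=0$ for $i>0$. This is rational singularities.

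The main obstacle is the careful bookkeeping of the dualizing complex: one must state Grothendieck duality in its derived form, justify the collapse of $R\psi_*\omega_W^\bullet$ to a single sheaf via Grauert--Riemenschneider, and correctly identify ``the dualizing complex sits in a single degree'' with the Cohen--Macaulay property. The identity $\mathbb{D}(\omega_X)=\OO_X[d]$ together with biduality is what powers the converse; verifying it reduces to the standard fact that the canonical module of a normal Cohen--Macaulay ring has endomorphism ring $A$ and no higher self-extensions against the dualizing complex.
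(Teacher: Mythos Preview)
The paper does not prove this lemma; it is quoted as a well-known result of Kempf with a reference to \cite[p.~50]{KKMS73}. So there is no in-paper argument to compare against.

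Your proposal is a correct and standard modern proof via Grothendieck duality plus Grauert--Riemenschneider vanishing. The forward direction is clean. In the converse, the key computation $\mathbb{D}(\omega_X)\cong\OO_X[d]$ is justified exactly as you say: for a Cohen--Macaulay local ring with canonical module $\omega$, the module $\omega$ is maximal Cohen--Macaulay, hence $\mathrm{Ext}^i(\omega,\omega)=0$ for $i>0$, while $\mathrm{Hom}(\omega,\omega)=R$. One cosmetic point: you do not need the isomorphism $\psi_*\omega_W\cong\omega_X$ to be the trace map; an abstract isomorphism suffices, since after dualizing you only use the resulting abstract isomorphism $R\psi_*\OO_W\cong\OO_X$ to read off the vanishing of $R^i\psi_*\OO_W$ for $i>0$, and $\psi_*\OO_W=\OO_X$ is already granted by normality. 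Kempf's original argument in \cite{KKMS73} is organized somewhat differently (closer to a direct spectral-sequence/duality computation rather than the slick biduality formulation), but the substance is the same: Grauert--Riemenschneider collapses one side, and duality transports Cohen--Macaulayness across.
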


As in Lemma \ref{rati}, applying the Leray spectral sequence shows
that the previous Lemma is still valid if we allow $W$ to have rational
singularities. In the next proposition, we give a partial criterion as
to when a T-variety is Cohen-Macaulay.

\begin{proposition} \label{cmgen}
Let $X=X[\DD]$, where $\DD$ is a proper polyhedral divisor on $Y$. If $\D^\times=\sigma(1)$, and $\D^\times_Z=\DD_Z^{(0)}$, for all prime divisor $Z\in Y$, then $X$ is Cohen-Macaulay if and only if $X$ has rational singularities.
\end{proposition}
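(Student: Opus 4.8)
The plan is to apply Kempf's criterion (Lemma~\ref{kempf}) to the partial desingularization $\varphi\colon\tX[\DD]\to X[\DD]$, and to show that the two hypotheses force $\varphi$ to be an isomorphism in codimension one. One direction is free: a variety with rational singularities is Cohen--Macaulay, so ``rational $\Rightarrow$ Cohen--Macaulay'' needs no hypotheses. For the converse I would assume $X$ is Cohen--Macaulay. Since $\tX=\tX[\DD]$ is toroidal (Proposition~\ref{toroidal}), it has only toric, hence rational, singularities; by the remark following Lemma~\ref{kempf}, Kempf's criterion may be applied with the resolution replaced by the rational variety $\tX$. Thus $X$ has rational singularities if and only if $X$ is Cohen--Macaulay and $\varphi_*\omega_{\tX}\cong\omega_X$, and it remains only to establish the isomorphism $\varphi_*\omega_{\tX}\cong\omega_X$.

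The heart of the argument is to show that $\varphi$ contracts no prime divisor. I would use that $\varphi$ is $T$-equivariant: its exceptional locus is a closed $T$-invariant subset, and since $T$ is connected each irreducible component is itself $T$-invariant. Hence every $\varphi$-exceptional prime divisor is one of the invariant divisors $\widetilde{E}_\rho$ or $\widetilde{D}_{Z,v}$ classified in Theorem~\ref{th:divisor}. By that theorem the \emph{non}-contracted invariant prime divisors are precisely those indexed by $\rho\in\DD^\times$ and by $v\in\DD_Z^\times$. The hypotheses $\DD^\times=\sigma(1)$ and $\DD_Z^\times=\DD_Z^{(0)}$ say that these exhaust all rays of the tail cone and all vertices of every slice, i.e.\ \emph{all} invariant prime divisors of $\tX$. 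Therefore $\varphi$ contracts nothing in codimension one and is an isomorphism away from a closed subset of codimension at least $2$.

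To finish I would invoke reflexivity. Both $\tX$ and $X$ are normal and Cohen--Macaulay (the former because it is rational), so their dualizing sheaves coincide with the reflexive canonical sheaves. A reflexive sheaf is recovered from its restriction to any open set whose complement has codimension $\ge 2$, and the pushforward of a reflexive sheaf under a proper birational morphism that is an isomorphism in codimension one is again reflexive; combining these gives $\varphi_*\omega_{\tX}\cong\omega_X$, and Kempf's criterion then yields rationality. The main obstacle I anticipate is the input that $\tX$ has rational singularities in the stated generality: Proposition~\ref{toroidal} requires $\DD$ to be SNC, whereas here $\DD$ is only proper and $Y$ only normal. Reducing to an SNC model $\psi^*\DD$ via Lemma~\ref{proj} replaces $\tX$ by a toroidal variety but also introduces exceptional divisors over the locus blown up in $Y$, which the new contraction would then collapse, so $\varphi$ is no longer small on that model. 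The delicate point is thus to check that passing to an SNC model does not disturb the comparison $\varphi_*\omega_{\tX}\cong\omega_X$ --- equivalently, that these extra exceptional divisors are irrelevant for the pushforward of the canonical sheaf --- or, alternatively, to prove directly that $\tX[\DD]$ has rational singularities for any proper $\DD$ satisfying the hypotheses.
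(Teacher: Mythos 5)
Your positive argument is exactly the paper's own proof, just written out in more detail: the paper deduces from Theorem~\ref{th:divisor} and the two hypotheses that $\varphi\colon\tX[\DD]\to X[\DD]$ is an isomorphism in codimension one, concludes $\varphi_*\omega_{\tX}\simeq\omega_X$, and then cites Lemma~\ref{kempf} in the strengthened form of the remark following it, with $W=\tX$. So the equivariance argument, the use of Theorem~\ref{th:divisor}, and the reflexivity argument for $\varphi_*\omega_{\tX}\simeq\omega_X$ are all precisely what the paper does.

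The obstacle you flag at the end is therefore not a defect of your write-up relative to the paper: it is a gap in the paper's own proof. Invoking Lemma~\ref{kempf} with $W=\tX$ silently uses that $\tX[\DD]$ has rational singularities, and the only source for this is Proposition~\ref{toroidal}, which requires $\DD$ to be SNC, whereas Proposition~\ref{cmgen} assumes only properness. Your further point is also correct: after passing to an SNC model $\psi^*\DD$ via Lemma~\ref{proj}, every vertex sitting over a $\psi$-exceptional divisor $E$ is non-extremal (the restriction of $\psi^*\DD(u)$ to $E$ is pulled back from a lower-dimensional image, hence never big), so the new contraction collapses the divisors $\widetilde{D}_{E,v}$ and is no longer small; the gap cannot be closed by Lemma~\ref{proj} alone. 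In fact the missing hypothesis is essential and not merely a gap in exposition. Take $Y$ the projective cone over an elliptic curve $(E,A)$, $D\in|\OO_Y(1)|$ a hyperplane section missing the vertex, $\sigma=\QQ_{\geq 0}^2\subset N_\QQ=\QQ^2$, and $\DD=\bigl((1,1)+\sigma\bigr)\cdot D$. Since $D$ is ample Cartier, every ray and every vertex is extremal, and $\DD$ is proper; here $\tX[\DD]$ is the total space of $\OO_Y(-D)^{\oplus 2}$ (not rational, since $Y$ is not), while $X[\DD]$ is the affine cone over $Y\times\PP^1$ with respect to $D\boxtimes\OO(1)$. Using $H^1(Y,mD)=0$ for all $m\in\ZZ$ and $H^2(Y,mD)=0$ for $m\geq 0$ (a local cohomology computation for the cone $Y$), one checks that $X[\DD]$ is Cohen--Macaulay; yet it does not have rational singularities, because the punctured cone is a $\KK^*$-bundle over the non-rational variety $Y\times\PP^1$. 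So the proposition must be read with the standing assumption that $\DD$ is SNC (equivalently, one must supply your alternative fix of proving $\tX[\DD]$ rational, which the example shows is impossible in general); under that reading your proof, like the paper's, is complete.
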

\begin{proof}
By Theorem~\ref{th:divisor}, the contraction $\varphi:\tX\rightarrow X$ is an isomorphism in codimension 1. Thus $\varphi_*\omega_{\tX}\simeq \omega_X$. The result now follows from Lemma \ref{kempf}.
\end{proof}

For isolated singularities we can give a full classification whenever $\rank N\geq 2$.

\begin{corollary} \label{CM-iso}
Let $X=X[\DD]$, where $\DD$ is an SNC polyhedral divisor on $Y$. If $\rank N\geq 2$ and $X$ has only isolated singularities, then $X$ is Cohen-Macaulay if and only if $X$ has rational singularities.
\end{corollary}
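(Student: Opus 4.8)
The plan is to prove the two implications separately, reading off everything from the computation of the higher direct image sheaves in Theorem~\ref{Tdirect}. The implication ``rational $\Rightarrow$ Cohen-Macaulay'' is general and uses neither hypothesis: it is one half of Kempf's criterion, Lemma~\ref{kempf}, which already asserts that a variety with rational singularities is Cohen-Macaulay. So the entire content is the reverse implication ``Cohen-Macaulay $\Rightarrow$ rational,'' and this is where I would spend the hypotheses $\rank N \geq 2$ and isolatedness.

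First I would set up the dimension bookkeeping. Write $n = \rank N$ and $d = \dim X = \dim Y + n$. By Theorem~\ref{Tdirect}, for each $i$ the sheaf $R^i\psi_*\OO_W$ is the one associated to $\bigoplus_{u \in \sigma^\vee_M} H^i(Y,\OO(\DD(u)))$. Since $H^i(Y,\mathscr{F})=0$ for every sheaf once $i>\dim Y$, the only relevant degrees are $1 \le i \le \dim Y$. The decisive use of $n\geq 2$ is that $d-1 = \dim Y + n - 1 > \dim Y$, so $R^{d-1}\psi_*\OO_W$ is computed by a cohomology group of $Y$ in a degree strictly above $\dim Y$ and hence vanishes \emph{unconditionally}.

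Next I would invoke the local description of Cohen-Macaulayness for an isolated singularity. Because the singular locus is a finite set of (necessarily $T$-fixed) points, each $R^i\psi_*\OO_W$ with $i\geq 1$ is supported on that finite set, and a standard local-cohomology/depth argument gives that $X$ is Cohen-Macaulay if and only if $R^i\psi_*\OO_W = 0$ for $1 \le i \le d-2$. Assuming $X$ Cohen-Macaulay therefore yields this vanishing in the range $1\le i \le d-2$; the case $i=d-1$ is free by the previous paragraph (here $n\geq 2$ is used); and $R^i\psi_*\OO_W=0$ for $i\geq d$ since nothing lives in those degrees. Hence $R^i\psi_*\OO_W=0$ for all $i\geq 1$, and together with $\psi_*\OO_W=\OO_X$ (normality) this is exactly the statement that $X$ has rational singularities. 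Equivalently, one reads off $H^i(Y,\OO(\DD(u)))=0$ for all $u$ and all $1\le i \le \dim Y$ and quotes Theorem~\ref{Trat}.

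The step I expect to be the main obstacle is justifying the characterization ``$X$ Cohen-Macaulay $\iff R^i\psi_*\OO_W = 0$ for $1\le i\le d-2$'' with the correct range, and pinning down that isolatedness is precisely what licenses it: I would need to check that the sheaves $R^i\psi_*\OO_W$ are supported on the zero-dimensional singular set, so that depth can be measured point by point via the local cohomology of punctured neighborhoods, and to verify the top degree is $d-2$ rather than $d-1$. I would also double-check the degenerate cases to confirm the ranges behave as intended and that $n\geq 2$ is genuinely necessary; the excluded case $n=1$, $\dim Y=1$ (for instance the affine cone over an elliptic curve, which is Cohen-Macaulay but not rational) is exactly where $d-1=\dim Y$ fails to be strict, so $R^{d-1}\psi_*\OO_W$ need not vanish. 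This is a good sanity check that the rank hypothesis enters in the right place.
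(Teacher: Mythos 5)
Your proposal is correct and takes essentially the same route as the paper: the paper also gets rational $\Rightarrow$ Cohen--Macaulay from Kempf's criterion (Lemma~\ref{kempf}), and for the converse uses isolatedness to reduce everything to the single degree $i=\dim X-1$, which Theorem~\ref{Tdirect} kills because $\dim X - 1 = \dim Y + \rank N - 1 > \dim Y$ when $\rank N\geq 2$. The one step you flagged as the main obstacle --- that for a normal isolated singularity Cohen--Macaulayness forces $R^i\psi_*\OO_W=0$ for $1\leq i\leq \dim X-2$ --- is precisely what the paper quotes from the literature (\cite[Lemma 3.3]{Kov99}), so your argument is complete modulo that standard reference.
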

\begin{proof}
  We only have to prove the ``only if'' part. Assume that $X$ is
  Cohen-Macaulay and let $\psi:W\rightarrow X$ be a resolution of
  singularities. Since $X$ has only isolated singularities we have
  that $R^i\psi_*\OO_W$ vanishes except possibly for $i=\dim X-1$, see
  \cite[Lemma 3.3]{Kov99}. Now Theorem \ref{Tdirect} shows that
  $R^i\psi_*\OO_W$ vanishes also for $i=\dim X-1$ since $\dim Y=\dim X
  -\rank M$ and $\rank M\geq 2$.
\end{proof}

\begin{remark}
  In \cite{Wat81} a criterion of $X$ to be Cohen-Macaulay is given in
  the case where $\rank M=1$. In this particular case, a partial
  criterion for $X$ to have rational singularities is given.
\end{remark}

\section{Canonical divisors and discrepancies}
\label{sec:canonical-div}


In the following we will restrict to the case that 
$Y$ is projective and $\sigma$ has maximal dimension. This corresponds to the fact, that there is a unique fixed point lying in the closure of all other orbits.
In particular, there is an embedding $\CC^* \hookrightarrow T$ inducing a good $\CC^*$-action on $X$. Hence, the singularity at the vertex is quasihomogeneous.

We fix isomorphisms $N=\ZZ^n$ and  $\num Y:=\wdiv Y/\equivn \cong \ZZ^r$. While we  write elements of $N$ as row vectors we will write elements of $M=N^*$ as well as elements of $\num Y$ as column vectors. 


\begin{lemma}[Prop.~3.1, \cite{PeSu08}] \label{picard}
If $\sigma$ is full dimensional and $Y$ projective then every invariant Cartier divisor on $X[D]$ is principal.
\end{lemma}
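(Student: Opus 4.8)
The plan is to exploit the good $\CC^*$-action described just before the statement. Since $\sigma$ is full-dimensional and $Y$ is projective, I would first record that $A := A[\DD]$ carries a grading $A = \bigoplus_{m\geq 0} A_{(m)}$ with $A_{(0)} = \KK$, induced by a cocharacter $\lambda \in \relint\sigma$: full-dimensionality of $\sigma$ makes $\sigma^\vee$ pointed, so $\langle \lambda, u\rangle > 0$ for every nonzero $u \in \sigma^\vee_M$, while projectivity of $Y$ gives $A_{(0)} = H^0(Y,\OO_Y(\DD(0))) = H^0(Y,\OO_Y) = \KK$. Hence the irrelevant ideal $\mathfrak{m} = \bigoplus_{m>0} A_{(m)}$ is the unique homogeneous maximal ideal, and its closed point $x_0$ is the unique fixed point. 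The single geometric fact I extract is the one stated in the preceding paragraph: $x_0$ lies in the closure of every $T$-orbit, so that \emph{every nonempty $T$-invariant closed subset of $X := X[\DD]$ contains $x_0$} (an invariant closed set contains the orbit closure of each of its points, and each such closure contains $x_0$).

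Next, let $D$ be a $T$-invariant Cartier divisor. Since $D$ is invariant, $\OO_X(D)$ is a coherent reflexive $A$-module $L$ which is $M$-graded, hence also $\ZZ$-graded via $\lambda$; since $D$ is Cartier, $L$ is invertible, so the localization $L_{\mathfrak{m}}$ is free of rank one over $A_{\mathfrak{m}}$. I would then apply graded Nakayama to the connected graded module $L$: because $L/\mathfrak{m}L$ is one-dimensional over $\KK = A/\mathfrak{m}$, the module $L$ admits a single homogeneous generator $s$. Writing $s = g^{-1}$ for a semi-invariant rational function $g$, this homogeneous generator trivializes $\OO_X(D)$ on a neighborhood of $x_0$; equivalently, the $T$-invariant Weil divisor $E := D + \divisor(g)$ vanishes near $x_0$, so $x_0 \notin \supp E$.

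Finally I would combine the two observations. The support $\supp E$ is a $T$-invariant closed subset that does not contain $x_0$, so by the first paragraph it must be empty. Therefore $E = 0$, that is $D = \divisor(g^{-1})$ is principal, as desired.

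The hard part will be the passage from ``Cartier'' to ``semi-invariant local generator at the vertex'': this is exactly where graded Nakayama over the local ring $A_{\mathfrak{m}}$ at the (possibly singular) vertex is needed, and one must be careful that $\OO_X(D)$ is genuinely a graded $A$-module so that its generator can be chosen homogeneous, i.e.\ so that $g$ is semi-invariant. Once such a generator is in hand, the geometric input that $x_0$ lies in every orbit closure finishes the argument almost formally, and the remaining support bookkeeping is routine.
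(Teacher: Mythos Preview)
The paper does not supply its own proof of this lemma; it is simply quoted from \cite{PeSu08}. Your argument is correct and is essentially the standard one: the hypotheses force a positive $\ZZ$-grading on $A=A[\DD]$ with $A_{(0)}=\KK$, graded Nakayama applied to the finitely generated graded invertible module $L=\Gamma(X,\OO_X(D))$ yields a single homogeneous generator, and this generator is then a global semi-invariant equation for $D$.

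One small remark: your closing step via $\supp E$ and the attractive point is not strictly necessary. Once $L\subset K(X)$ is cyclic, generated by a nonzero $s$, it is automatically \emph{free} of rank one because $A$ is a domain (the map $a\mapsto as$ is injective). Hence $\OO_X(D)=s\cdot\OO_X$ holds globally, not merely in a neighborhood of $x_0$, and $D=-\divisor(s)$ follows immediately without invoking that every orbit closure contains $x_0$. Your support argument is a perfectly valid alternative route---and is indeed how one would conclude if one only had a \emph{local} homogeneous generator at $\mathfrak m$---but graded Nakayama already hands you a global one.
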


\begin{theorem}[\cite{PeSu08}, Cor. 3.15]\label{divclass}
The divisor class group of $X[\D]$  is isomorphic to 
\[\cl Y \oplus \bigoplus_\rho \ZZ D_\rho \oplus \bigoplus_{Z,v} \ZZ D_{Z,v}\]
modulo the relations
\begin{eqnarray*}
[Z] &=& \sum_{v\in \D_Z^\times} \mu(v)D_{Z,v}\,,\\
 0  &=& \sum_{\rho}  \langle u,\rho \rangle D_\rho  + \sum_{Z,v} \mu(v) \langle u,v \rangle D_{Z,v}\,.
\end{eqnarray*}
\end{theorem}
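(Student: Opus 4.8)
The plan is to present the asserted group as $\cl X[\D]$ by constructing an explicit surjection and then identifying its kernel with the two families of relations. Set $G := \cl Y \oplus \bigoplus_{\rho}\ZZ D_\rho \oplus \bigoplus_{Z,v}\ZZ D_{Z,v}$, where $\rho$ ranges over $\D^\times$ and $v$ over $\D_Z^\times$, and define $\Phi : G \to \cl X[\D]$ by $\Phi|_{\cl Y} = \pi^\ast$ and $\Phi(D_\bullet) = [D_\bullet]$. The first point to check is that $\pi^\ast : \cl Y \to \cl X[\D]$ is well defined: the rational map $\pi$ is an isomorphism outside codimension $\geq 2$, so pullback of divisor classes makes sense, and for $f \in K(Y)$ formula \eqref{eq:principal-divisor} with $u=0$ gives $\divi(f) = \sum_{Z,v}\mu(v)\,\ord_Z(f)\,D_{Z,v} = \pi^\ast(\divi f)$ on $X[\D]$, which is principal; hence $\pi^\ast$ descends to divisor classes.

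For surjectivity of $\Phi$ I would use the excision sequence for the open set $U = X[\D] \setminus \bigcup (\text{invariant prime divisors})$. By Theorem \ref{th:divisor} these divisors are exactly the $D_\rho$ ($\rho \in \D^\times$) and $D_{Z,v}$ ($v\in\D_Z^\times$), so $\bigoplus_\bullet \ZZ D_\bullet \to \cl X[\D] \to \cl U \to 0$ is exact. The generic fiber of $\pi$ is the torus $T$, so $U$ is a torus bundle over a dense open $Y^\circ \subseteq Y$ and $\cl U \cong \cl Y^\circ$, a quotient of $\cl Y$ through which the reduction of $\pi^\ast$ factors; this forces $\Phi$ to be onto. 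That the two families of relations lie in $\ker\Phi$ is then read off directly: the identity $\pi^\ast Z = \sum_{v\in\D_Z^\times}\mu(v)D_{Z,v}$ recorded before the theorem gives the first family $[Z] = \sum_v \mu(v)D_{Z,v}$, while \eqref{eq:principal-divisor} applied to the character $\chi^u$ (i.e.\ $f=1$) gives the principal divisor $\sum_\rho \langle u,\rho\rangle D_\rho + \sum_{Z,v}\mu(v)\langle u,v\rangle D_{Z,v}$, which vanishes in $\cl X[\D]$, yielding the second family.

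The heart of the argument, and the step I expect to be hardest, is \emph{completeness} of these relations, i.e.\ that $\ker\Phi$ is generated by them. Given $\xi \in \ker\Phi$, I would lift it to an invariant Weil divisor $\pi^\ast C + \sum_\rho a_\rho D_\rho + \sum_{Z,v} b_{Z,v} D_{Z,v}$ that is principal, say equal to $\divi(g)$. Since this divisor is $T$-invariant and $T$ is connected, $g$ must be a $T$-semi-invariant $g = f\chi^u$ with $f \in K(Y)$ and $u\in M$, an eigenvector of $T$ on $K(X)=K(\tX)$; establishing this semi-invariance cleanly is the key technical point. Expanding $\divi(f\chi^u)$ by \eqref{eq:principal-divisor} and subtracting the second-family relation for this $u$ reduces the situation to $u=0$, i.e.\ to $\divi(f) = \pi^\ast(\divi f)$, an element of $\pi^\ast\cl Y$. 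Matching this against the first-family relations $\pi^\ast Z = \sum_v \mu(v)D_{Z,v}$ and the presentation of $\cl Y$ as $\wdiv Y$ modulo principal divisors then shows that $\xi$ is a combination of the two families. The delicate bookkeeping is to disentangle the $\cl Y$-contribution (coming from $f$) from the character contribution (coming from $\chi^u$) without introducing any spurious identification among the $[Z]$ and the $D_{Z,v}$. This gives injectivity of the induced map $G/(\text{relations}) \to \cl X[\D]$ and completes the proof.
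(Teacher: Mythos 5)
A point of comparison first: the paper does not prove this theorem at all --- it is quoted from \cite{PeSu08}, Cor.~3.15 --- so your attempt can only be measured against the standard argument of that reference, whose overall shape your outline correctly reproduces: invariant divisors generate $\cl X[\D]$, invariant principal divisors are exactly the divisors of semi-invariant functions $f\chi^u$, and formula \eqref{eq:principal-divisor} turns these into the two families of relations. Your final bookkeeping is also sound: writing $\divi(f\chi^u)=\pi^*\divi(f)+\divi(\chi^u)$, subtracting the relation attached to $u$, and absorbing $\pi^*\divi(f)$ via the relations $[Z]=\sum_v\mu(v)D_{Z,v}$ does identify the kernel, \emph{provided} the two steps below are repaired.

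There are two genuine gaps. (1) The surjectivity argument fails as written. By Theorem~\ref{th:divisor} there is an invariant prime divisor $D_{Z,v}$ for \emph{every} prime divisor $Z\subset Y$ (for $Z\notin\supp\D$ one has $\D_Z=\sigma$, $v=0$, $\mu(v)=1$, and $D_{Z,0}=\pi^*Z$), so the invariant prime divisors are infinite in number and their union is dense in $X[\D]$. Hence your $U$ is not open (its interior is empty), there is no excision sequence for it, and it is not a torus bundle over a dense open subset of $Y$ --- it is essentially the generic fiber of $\pi$. The standard repair: remove only the finitely many invariant divisors lying over $\supp\D$ together with the $D_\rho$, $\rho\in\D^\times$; up to subsets of codimension $\geq 2$ the resulting open set is $(Y\setminus\supp\D)\times T$, so its class group is $\cl(Y\setminus\supp\D)$, and surjectivity follows since $\cl Y\to\cl(Y\setminus\supp\D)$ is onto while $[D_{Z,0}]=\pi^*[Z]$ for $Z\notin\supp\D$ already lies in the image of $\Phi$. (2) The step you yourself flag as the crux --- that a $T$-invariant principal divisor is the divisor of a semi-invariant function --- is asserted but not proved, and connectedness of $T$ alone does not give it; one needs control of the units. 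In the setting of this section ($Y$ projective, $\sigma$ full-dimensional) one has $A[\D]_0=H^0(Y,\OO_Y)=\KK$ and the grading semigroup $\sigma^\vee_M$ is pointed, so $\Gamma(X,\OO_X^*)=\KK^*$; then $\divi(t\cdot g)=\divi(g)$ forces $t\cdot g/g\in\KK^*$ for all $t\in T$, the map $t\mapsto t\cdot g/g$ is a character $\chi^u$, and $g\chi^{-u}\in K(X)^T=K(Y)$, giving $g=f\chi^u$. Without this (or Rosenlicht's unit theorem in general) the identification of $\ker\Phi$ is incomplete. A minor further slip: $\pi$ is not ``an isomorphism outside codimension $\geq 2$''; it is the quotient map with $n$-dimensional fibers, merely \emph{defined} off a codimension-$\geq 2$ subset --- harmless here, since your actual justification that $\pi^*$ descends to classes, namely $\pi^*\divi(f)=\divi(f\chi^0)$, is the correct one.
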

\begin{corollary}[\cite{PeSu08}, Cor. 3.15]
\label{sec:cor-Q-fac}
  $X[\D]$ is $\QQ$-factorial if and only if
  \[
  \rk \cl Y + \sum_Z (\#\D_z^\times -1) + \#\D^\times = \dim N.
  \]
  In particular $Y$ has to have a finitely generated class group.
\end{corollary}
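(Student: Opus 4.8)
The plan is to prove the $\QQ$-factoriality criterion directly from the description of the class group given in Theorem~\ref{divclass}. Recall that $X[\D]$ is $\QQ$-factorial precisely when $\cl X[\D]$ is a finite group after tensoring with $\QQ$, i.e.\ when $\cl X[\D] \otimes \QQ = 0$; equivalently, when every Weil divisor has a multiple that is Cartier, and by Lemma~\ref{picard} every invariant Cartier divisor is principal, so $\QQ$-factoriality amounts to the vanishing of $\cl X[\D] \otimes \QQ$. Thus the whole problem is linear-algebraic: compute the rank of the $\QQ$-vector space $\cl X[\D] \otimes \QQ$ and show it vanishes exactly under the stated numerical condition.

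The key computation is to read off the rank of the class group from the presentation in Theorem~\ref{divclass}. The generators, before imposing relations, span a $\QQ$-vector space of dimension
\[
\rk \cl Y \;+\; \#\{\rho\} \;+\; \sum_Z \# \D_Z^{(0)},
\]
where the $D_\rho$ range over the rays $\rho\in\sigma(1)$ and the $D_{Z,v}$ range over all vertices $v\in\D_Z^{(0)}$. I would then count the relations. The first family, $[Z] = \sum_{v\in\D_Z^\times}\mu(v)D_{Z,v}$, allows one to eliminate the class $[Z]$ of each prime divisor on $Y$, but one must be careful: these only express the image of $\cl Y$ in terms of the $D_{Z,v}$, and the number of \emph{independent} such relations equals $\rk\cl Y$ (since $\num Y \cong \ZZ^r$ records exactly the numerical/linear-equivalence classes). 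The second family is parametrized by $u\in M$ and contributes, after tensoring with $\QQ$, exactly $\dim M = \dim N$ independent relations, because $\sigma$ is full-dimensional so the pairing $u\mapsto(\langle u,\rho\rangle, \langle u,v\rangle)$ is injective on $M_\QQ$.

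From here the rank of $\cl X[\D]\otimes\QQ$ is
\[
\Big(\rk\cl Y + \#\{\rho\} + \textstyle\sum_Z \#\D_Z^{(0)}\Big)
\;-\; \rk\cl Y \;-\; \dim N,
\]
and $\QQ$-factoriality is the vanishing of this quantity. To match the stated formula one rewrites the generator count: the rays split as $\#\{\rho\} = \#\D^\times + (\#\{\rho\} - \#\D^\times)$, and the vertices as $\sum_Z\#\D_Z^{(0)} = \sum_Z\#\D_Z^\times + \sum_Z(\#\D_Z^{(0)} - \#\D_Z^\times)$. The divisors indexed by the \emph{non-extremal} rays and vertices are exactly the exceptional divisors that do not appear on $X[\D]$ by Theorem~\ref{th:divisor}, so those generators are not present in $\cl X[\D]$ at all, and the bookkeeping must subtract them. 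After this substitution the vanishing condition becomes
\[
\rk\cl Y + \sum_Z(\#\D_Z^\times - 1) + \#\D^\times = \dim N,
\]
as claimed; the ``in particular'' statement follows since the formula forces $\rk\cl Y$ to be finite.

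\textbf{The main obstacle} I expect is the careful counting of relations versus generators, specifically confirming that the two relation families are jointly independent and contribute exactly $\rk\cl Y + \dim N$ to the count. In particular one must verify that eliminating the $[Z]$ via the first relations does not accidentally collapse relations in the second family, and that the ``$-1$'' terms $\sum_Z(\#\D_Z^\times - 1)$ arise correctly: each prime divisor $Z$ contributes $\#\D_Z^\times$ generators $D_{Z,v}$ but its single relation $[Z] = \sum_v \mu(v)D_{Z,v}$ either eliminates $[Z]$ (if $[Z]$ is a nontrivial class) or imposes one constraint among the $D_{Z,v}$, and disentangling this against the global $\rk\cl Y$ count is the delicate point. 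The cleanest way around this is to set up a single matrix whose columns are the generators $\{[Z]\} \cup \{D_\rho\} \cup \{D_{Z,v}\}$ and whose rows are the two relation families, and compute its rank over $\QQ$ directly; the full-dimensionality of $\sigma$ and the identification $\num Y\otimes\QQ \cong \QQ^r$ guarantee the relevant submatrices have the expected maximal rank.
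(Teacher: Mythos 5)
Your overall strategy coincides with the paper's (reduce $\QQ$-factoriality to $\cl X[\D]\otimes\QQ=0$ via Lemma~\ref{picard}, then count generators against relations in the presentation of Theorem~\ref{divclass}), but your bookkeeping of the first relation family is wrong, and this is exactly where the formula's shape comes from. The relations $[Z]=\sum_{v\in\D_Z^\times}\mu(v)D_{Z,v}$ are indexed by \emph{prime divisors} $Z$ of $Y$ (infinitely many of them), not by classes in $\cl Y$, so the number of independent relations in this family is not $\rk\cl Y$: each relation involves its own private generators $D_{Z,v}$, so the relations for distinct $Z$ are automatically independent. The correct procedure (the paper's) is to observe that for $Z\notin\supp\D$ the relation reads $[Z]=D_{Z,0}$ and cancels against the generator $D_{Z,0}$; after this elimination one is left with $\#\supp\D+\dim N$ relations against $\rk\cl Y+\sum_{Z\in\supp\D}\#\D_Z^\times+\#\D^\times$ generators, and equating these two numbers produces both the $\rk\cl Y$ term and the ``$-1$'' per divisor in the stated identity. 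With your count, the $\rk\cl Y$ appearing among the generators cancels against the $\rk\cl Y$ you claim for the relations, and your computation actually yields the condition $\#\D^\times+\sum_Z\#\D_Z^\times=\dim N$, which differs from the claimed formula by $\rk\cl Y-\#\supp\D$; your final sentence asserts that ``after this substitution'' the right formula appears, but no substitution in your argument produces it. (You flag precisely this disentangling as the main obstacle, which is fair, but the proof then leaves it unresolved.)

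The second gap is the independence of the relations, which you dispose of by appealing to full-dimensionality of $\sigma$. That is insufficient: the relations $0=\sum_\rho\langle u,n_\rho\rangle D_\rho+\sum_{Z,v}\mu(v)\langle u,v\rangle D_{Z,v}$ involve only the \emph{extremal} rays $\rho\in\D^\times$ and extremal vertices (Theorem~\ref{th:divisor}), and these need not span $N_\QQ$ even when $\sigma$ is full-dimensional. Establishing independence is the entire content of Lemma~\ref{sec:lem-max-rank}, whose proof is not formal linear algebra: it uses properness of $\D$ — bigness of $\D(w)$ for $w$ in the interior of a chamber of the normal quasifan, the associated birational contraction, and a construction producing for each non-extremal ray $\rho$ a vector $v_\rho\in\relint\rho$ out of the vertex rows — to supply the missing directions. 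Your proposed remedy (``set up a single matrix and compute its rank directly'') is indeed the paper's route, but the rank computation is the hard step, not a routine consequence of $\sigma$ being full-dimensional and $\num Y\cong\ZZ^r$.
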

\begin{proof}
  For $Z \notin \supp \D$ we have relations of the form $Z =
  \D_{Z,v}$, so after dividing them out we remain with finitely many
  generators and finitely many equations. Lemma~\ref{sec:lem-max-rank}
  given below shows that the relations are linearly independent and
  that there at most $\rk\cl Y +\sum_{Z \in \supp \D} \# \D^\times_Z +
  \# \D^\times$ of them.
\end{proof}

Fix a canonical divisor $K_Y = \sum_Z b_Z \cdot Z$ on $Y$. Then by
\cite{PeSu08} a $\TT$-invariant canonical divisor on $\tX[\D]$ and
$X[\D]$, respectively, is given by
\begin{align}
  \label{eq:KX}
   K_{\tX}&=q^* K_Y + \sum_{v} (\mu(v)-1)D_{v} - \sum_{\rho}E_\rho, \\
   K_X &=  \pi^*K_Y + \sum_{v} (\mu(v)-1)D_{v} - \sum_{\rho}E_\rho \nonumber
 \end{align}
 Here, the sums in the first formula run over all rays and vertices
 and in the second only over the extremal ones.

 Since $X[\D]$ has an attractive point, its Picard group is
 trivial. Hence, it is $\QQ$-Gorenstein of index $\ell$ iff we find a
 character $u \in M$ and a principal divisor $\divisor(f) = \sum_Z a_Z
 \cdot Z$ on $Y$. Such that $\divisor(f \cdot \chi^u)=K_X$. This
 observation gives rise to the following system of linear equations.
 Here, we assume that $\supp \D \cup \supp K_Y = \{Z_1, \ldots, Z_s\}$
 and $\D_{Z_i}^\times = \{v_i^1, \ldots v_i^{r_i}\}$. We set
 $\mu^i_j=\mu(v^i_j)$ and denote the classes of $Z_i \in \num Y \cong
 \ZZ^r$ by $\overline{Z}_i$.
\begin{align} \label{the-monster}
\begin{pmatrix}
  \overline{Z}_1 & \overline{Z}_2 & \ldots &  \overline{Z}_s& 0 \\
  \hline
  \mu_{1}^1 & 0 & \ldots & 0 & \mu_{1}^1 v_{1}^1 \\
  \vdots & \vdots  & & \vdots &\vdots \\
  \mu_{1}^{r_1}& 0 & \ldots & 0 & \mu_{1}^{r_1} v_{1}^{r_1} \\
         &         & \ddots & & \\
  0 & 0 & \ldots &  \mu_{s}^1    &  \mu_{s}^1 v_{s}^{1} \\
  \vdots & \vdots  & & \vdots &\vdots \\
  0 & 0 & \ldots &  \mu_{s}^{r_s} & \mu_{s}^{r_s} v_{s}^{r_s} \\
  \hline
  0 & 0 & \ldots & 0 & n_{\varrho_1} \\
  \vdots & \vdots  &  & \vdots &\vdots \\
  0 & 0 & \ldots & 0 & n_{\varrho_{r}}   
\end{pmatrix}\cdot
\begin{pmatrix}
  a_1 \\
  \vdots \\
  a_s \\
  u
\end{pmatrix} =
\begin{pmatrix}
0\\
\hline
\mu^1_1b_1+\mu^1_1-1\\
\vdots \\
\mu^{r_1}_1b_1+\mu^{r_1}_1-1\\
\vspace{-2mm}\\
\vdots\\
\mu^{1}_sb_s+\mu^{1}_s-1\\
\vdots\\
\mu^{r_s}_sb_s+\mu^{r_s}_s-1\\
\hline
-1\vspace{-2mm}\\
\vdots\\
-1
\end{pmatrix}
\end{align}

\begin{proposition}
\label{sec:prop-calc-K}
  $X[\D]$ is $\QQ$-Gorenstein of index $\ell$ if and only if the above system has a (unique) solution $u \in \frac{1}{\ell} M$ and $\ell \cdot \sum_{i=1}^s a_i \cdot Z_i$ is principal.
\end{proposition}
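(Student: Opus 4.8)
The plan is to reduce $\QQ$-Gorensteinness to the principality of a multiple of the canonical divisor and then to translate principality, via the explicit divisor formulas \eqref{eq:KX} and \eqref{eq:principal-divisor}, into the linear system \eqref{the-monster}. Since $X=X[\D]$ carries an attractive fixed point its Picard group is trivial, so a Weil divisor on $X$ is Cartier if and only if it is principal. Thus $X$ is $\QQ$-Gorenstein of index $\ell$ precisely when $\ell$ is the least positive integer for which $\ell K_X$ is principal, and I would fix $\ell$ and characterise when $\ell K_X$ is principal.

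First I would note that the invariant divisor $\ell K_X$ is principal if and only if it is the divisor of a semi-invariant rational function $f\cdot\chi^w$ with $f\in K(Y)$ and $w\in M$: if $\ell K_X=\divisor(h)$ for some $h\in K(X)$, then for $t\in T$ the function $t\cdot h$ has the same divisor, so $h/(t\cdot h)$ is a global unit, hence constant (the attractive point again), forcing $h$ to be semi-invariant. Writing $\divisor(f)=\sum_Z a_Z'\,Z$ on $Y$, I would substitute $K_X$ from \eqref{eq:KX}, using $\pi^*Z=\sum_{v\in\D_Z^\times}\mu(v)D_{Z,v}$ to expand $\pi^*K_Y$, and the principal divisor \eqref{eq:principal-divisor} into $\divisor(f\cdot\chi^w)=\ell K_X$, and match coefficients. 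Matching along $E_\rho$ gives $\bangle{w,n_\rho}=-\ell$, matching along a horizontal divisor $D_{Z,v}$ gives $\mu(v)\big(\bangle{w,v}+a_Z'\big)=\ell\big(\mu(v)b_Z+\mu(v)-1\big)$, and for $Z\notin\{Z_1,\dots,Z_s\}=\supp\D\cup\supp K_Y$ (where $\D_Z=\sigma$ has the single vertex $0$ and $b_Z=0$) the matching forces $\ord_Z f=0$; hence $\divisor(f)$ is supported on $\{Z_1,\dots,Z_s\}$.

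Setting $u:=w/\ell$ and $a_i:=a_{Z_i}'/\ell$ and dividing the matched equations by $\ell$ turns them into exactly the middle rows $\mu^i_j a_i+\mu^i_j\bangle{v^i_j,u}=\mu^i_j b_i+\mu^i_j-1$ and bottom rows $\bangle{u,n_{\varrho_k}}=-1$ of \eqref{the-monster}; note the right-hand side of \eqref{the-monster} carries no factor of $\ell$, so the solution $(a_i,u)$ over $\QQ$ is independent of $\ell$ and is simply the $\QQ$-principal representation of $K_X$. Reversing this computation, a solution $(a_i,u)$ yields an honest principal divisor $\ell K_X$ precisely when $\ell u=w\in M$, i.e. $u\in\tfrac1\ell M$, and when $\ell\sum_i a_iZ_i=\divisor(f)$ is a genuine principal divisor on $Y$; the index is then the least such $\ell$. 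Uniqueness of $(a_i,u)$ follows from the same unit argument as above: a semi-invariant with prescribed divisor is unique up to scalar, so $w$ and the orders $\ord_{Z_i}f$, hence $(a_i,u)$, are determined. (Alternatively one may invoke the linear independence of the defining relations used for Corollary~\ref{sec:cor-Q-fac}, which forces the coefficient matrix of \eqref{the-monster} to have full column rank.)

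The step that must be handled with care is conceptual rather than computational. The linear system \eqref{the-monster} only sees divisors on $Y$ up to numerical equivalence: its top block $\sum_i a_i\overline{Z}_i=0$ in $\num Y$ is exactly the numerical triviality of $\sum_i a_iZ_i$, which is implied by — but strictly weaker than — the principality of $\ell\sum_i a_iZ_i$. Thus the $\QQ$-solvability of \eqref{the-monster} together with $u\in\tfrac1\ell M$ is not by itself enough to produce a principal $\ell K_X$; one genuinely needs the separate hypothesis that $\ell\sum_i a_iZ_i$ be principal. Keeping these two requirements apart — $\QQ$-linear solvability with the integrality $u\in\tfrac1\ell M$ on the one hand, and honest principality of $\ell\sum_i a_iZ_i$ on $Y$ on the other — is the essential point, and it is why principality appears as a condition distinct from the linear system.
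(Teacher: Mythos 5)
Your proposal is correct and follows essentially the same route as the paper: the paper's (very terse) proof likewise rests on the observation that the attractive fixed point makes $\pic X[\D]$ trivial, so $\QQ$-Gorensteinness amounts to finding a semi-invariant function $f\cdot\chi^u$ with $\divisor(f\cdot\chi^u)=K_X$, which after coefficient matching via \eqref{eq:KX} and \eqref{eq:principal-divisor} is exactly the system \eqref{the-monster}, with uniqueness supplied by the full column rank of the matrix (Lemma~\ref{sec:lem-max-rank}) — precisely the alternative you invoke parenthetically. Your write-up simply makes explicit the steps the paper leaves implicit (semi-invariance of a function with invariant principal divisor, vanishing of $\ord_Z f$ off $\supp\D\cup\supp K_Y$, and the genuine gap between numerical triviality in the top block and the separate principality hypothesis).
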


Note that the condition for $\QQ$-factoriality in
Corollary~\ref{sec:cor-Q-fac} is equivalent to the fact that $\cl Y$ has finite rank and the
matrix has square format. Moreover, for factoriality we get the
following stronger condition.

\begin{proposition}
  \label{sec:prop-factorial}
 $X[\D]$ is factorial iff $\cl(Y) \cong \ZZ^\ell$ and the above matrix
 is square and has determinant $\pm 1$.
\end{proposition}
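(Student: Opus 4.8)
The plan is to derive factoriality from the structure of the divisor class group described in Theorem~\ref{divclass}, by recognizing that factoriality is precisely the statement that $\cl X[\D]$ vanishes. I would organize the argument around the matrix appearing in~\eqref{the-monster}, whose columns encode the relations defining $\cl X[\D]$.

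First I would recall from Theorem~\ref{divclass} that $\cl X[\D]$ is presented as the cokernel of a homomorphism whose image is generated by the relations $[Z] = \sum_{v \in \D_Z^\times} \mu(v) D_{Z,v}$ and $0 = \sum_\rho \langle u, \rho\rangle D_\rho + \sum_{Z,v} \mu(v)\langle u,v\rangle D_{Z,v}$, as $u$ ranges over a basis of $M$. Since $X$ is factorial if and only if $\cl X[\D] = 0$, and since $\cl X[\D]$ is a finitely generated abelian group, vanishing is equivalent to the presentation map being surjective with the generators in bijection with a spanning set of relations. As in the proof of Corollary~\ref{sec:cor-Q-fac}, the relations of the form $Z = D_{Z,v}$ for $Z \notin \supp\D$ let us eliminate those generators, leaving a presentation by a finite matrix. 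I would identify this matrix (up to the elimination step) with the transpose of the coefficient matrix in~\eqref{the-monster}, whose rows are indexed by the generators $\cl Y \oplus \bigoplus D_{Z,v} \oplus \bigoplus E_\rho$ and whose columns correspond to the defining relations.

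Next I would invoke the standard fact from the theory of finitely presented abelian groups: the cokernel of an integer matrix $B$ is trivial if and only if $B$ is square (number of generators equals number of relations) and $\det B = \pm 1$ (equivalently, the Smith normal form is the identity). The condition that $\cl Y \cong \ZZ^\ell$ ensures the $\cl Y$ summand contributes a free group of the correct rank with no torsion and no extra relations, so that the class group of $Y$ does not obstruct factoriality; this is the analogue of requiring $\cl Y$ to have finite rank in Corollary~\ref{sec:cor-Q-fac}, strengthened to torsion-freeness. The squareness condition is exactly the $\QQ$-factoriality criterion of Corollary~\ref{sec:cor-Q-fac}, and it guarantees the map between free groups of equal rank; the determinant $\pm 1$ condition then upgrades $\QQ$-factoriality (finite class group) to genuine factoriality (trivial class group).

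The main obstacle I anticipate is bookkeeping rather than conceptual: I must verify carefully that the matrix in~\eqref{the-monster}, after the elimination of generators $D_{Z,v}$ with $Z \notin \supp\D$, genuinely coincides with the presentation matrix of $\cl X[\D]$, so that its determinant controls the order of the class group exactly. In particular I would check that the block encoding $\cl Y$ interacts correctly: a priori $\cl Y$ contributes both generators and its own internal relations, so I must confirm that the hypothesis $\cl Y \cong \ZZ^\ell$ (free, so no internal relations and no torsion) is precisely what makes the combined presentation reduce to a single square integer matrix whose determinant is $\pm 1$ iff $\cl X[\D] = 0$. Once this identification is pinned down, the conclusion is immediate from the Smith normal form characterization of trivial cokernels, and I would phrase the final step by citing Lemma~\ref{sec:lem-max-rank} (as in Corollary~\ref{sec:cor-Q-fac}) to justify that the relations are independent and correctly counted.
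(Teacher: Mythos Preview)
Your proposal is correct and follows essentially the same approach as the paper. The paper's own proof is extremely terse---it simply states that the result follows from the preceding considerations (the presentation of $\cl X[\D]$ in Theorem~\ref{divclass}) once the matrix is known to have maximal rank, and refers to Lemma~\ref{sec:lem-max-rank} for that fact; your outline unpacks exactly this reasoning, correctly identifying that factoriality amounts to $\cl X[\D]=0$, that the matrix in \eqref{the-monster} presents the class group after the elimination step, and that the Smith normal form criterion together with freeness of $\cl Y$ and Lemma~\ref{sec:lem-max-rank} then yield the equivalence.
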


\begin{proof}[Proofs]
  The propositions directly follow from the above considerations if
  the matrix is of maximal rank. This is indeed the case as
  we show in Lemma~\ref{sec:lem-max-rank} below.
\end{proof}

\begin{lemma}
\label{sec:lem-max-rank}
The columns of the matrix $A$ are linearly independent.
\end{lemma}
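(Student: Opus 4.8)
The plan is to show that the matrix $A$ in (\ref{the-monster}) has linearly independent columns by exploiting the block structure visible in its display. The columns are naturally indexed by $a_1, \ldots, a_s$ (one column for each prime divisor $Z_i \in \supp\D \cup \supp K_Y$) together with the $n = \dim N$ columns that constitute the block labelled by $u$. I would suppose a linear relation $\sum_i c_i \cdot (\text{column } a_i) + (\text{columns } u)\cdot w = 0$ for scalars $c_i$ and a vector $w \in N_\QQ^*$ (i.e. $w \in M_\QQ$), and derive $c_i = 0$ and $w = 0$.

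First I would read off the row blocks. The bottom block, coming from the extremal rays $\varrho_1, \ldots, \varrho_r$, contributes zero in every $a_i$-column and the values $\langle w, n_{\varrho_k}\rangle$ in the $u$-part; since $\D^\times$ consists of the extremal rays and by hypothesis (full-dimensional $\sigma$) the rays $\{n_{\varrho_k}\}$ span $N_\QQ$, vanishing of all $\langle w, n_{\varrho_k}\rangle$ forces $w = 0$. With $w=0$ in hand, I would turn to the middle blocks: for each fixed $Z_i$ the rows indexed by the extremal vertices $v_i^1, \ldots, v_i^{r_i}$ have entry $\mu_i^j c_i$ (the $a_i$-column) plus $\mu_i^j \langle w, v_i^j\rangle = 0$, so each such row gives $\mu_i^j c_i = 0$, and since $\mu_i^j > 0$ we get $c_i = 0$ for every $i$ with $r_i \geq 1$. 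Finally the top block of rows, carrying the classes $\overline{Z}_i \in \num Y$, handles any remaining $c_i$ (those $Z_i$ lying only in $\supp K_Y$ with no extremal vertices): the relation $\sum_i c_i \overline{Z}_i = 0$ together with the independence assumption $\cl Y \cong \ZZ^r$ (equivalently that the relevant classes are part of a basis) forces the remaining $c_i$ to vanish.

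The cleanest way to organize this is \emph{bottom-up}, solving for $w$ first and then back-substituting, since each block becomes triangularly decoupled once $w=0$ is established; I would present it exactly in the order (rays) $\Rightarrow$ (vertices) $\Rightarrow$ (divisor classes). The one point requiring genuine care — and the main obstacle — is the top row block: one must check that the classes $\overline{Z}_i$ appearing there are actually independent in $\num Y$. This is not automatic for arbitrary $Z_i$, but here the matrix has already been reduced (as in the proof of Corollary~\ref{sec:cor-Q-fac}) by dividing out the relations $[Z] = \sum_{v \in \D_Z^\times}\mu(v) D_{Z,v}$ for $Z \notin \supp\D$, so the surviving divisors are exactly those whose classes one may take to be independent generators of the free part of $\cl Y$; I would invoke precisely this reduction. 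A secondary subtlety is that $\num Y$ records numerical rather than linear equivalence, but since $X[\D]$ has an attractive fixed point its Picard group is trivial and, via Lemma~\ref{picard}, numerical and linear equivalence of the invariant divisors coincide for the purpose at hand, so the class-group relation used above is the correct one.
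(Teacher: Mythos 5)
Your block-by-block skeleton (rays $\Rightarrow$ vertices $\Rightarrow$ divisor classes) matches the shape of the paper's argument, and your middle step is fine: once $w=0$ is known, the rows for extremal vertices give $\mu_i^j c_i=0$, hence $c_i=0$ whenever $\D^\times_{Z_i}\neq\emptyset$. The essential gap is in your first step, the one meant to produce $w=0$. The bottom block of the matrix is indexed only by the \emph{extremal} rays $\rho\in\D^\times$, since only those give divisors $E_\rho$ on $X[\D]$ (Theorem \ref{th:divisor}); non-extremal rays appear nowhere in the matrix. Full-dimensionality of $\sigma$ guarantees that $\sigma(1)$ spans $N_\QQ$, \emph{not} that $\D^\times$ does: $\D^\times$ can be a proper subset of $\sigma(1)$ and can even be empty. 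For instance, on $Y=\PP^1$ take $\sigma=\pos((1,0),(0,1))$ and $\D=\Delta_0\cdot[0]$ with $\Delta_0=\conv((2,0),(0,2))+\sigma$; this $\D$ is proper, but $\deg\D$ meets both rays, so neither ray is extremal and your bottom block is empty, yet the lemma is still true (the vertex rows carry the information). Handling the non-extremal rays is exactly the hard content of the paper's proof: for each non-extremal ray $\rho$ it uses the normal quasifan of $\D$, a bigness argument, and a specially chosen coordinate $x_1$ on $\numq Y$ to exhibit a combination of matrix rows equal to $\left(x_1(\overline{Z}_1),\ldots,x_1(\overline{Z}_s),v_\rho\right)$ with $v_\rho\in\relint\rho$; the top-block relation $\sum_i c_i\overline{Z}_i=0$ then kills the first $s$ entries and forces $\langle w,v_\rho\rangle=0$. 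Only after every ray of $\sigma$ (extremal or not) has been covered does full-dimensionality yield $w=0$. Note also that this means the top block is needed \emph{before} the ray step, not only at the end.

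Your final step has a second gap. The columns left over after the vertex step are those $Z_i\in\supp\D$ with $\D^\times_{Z_i}=\emptyset$ (these occur in complexity at least two), and you dispose of them by asserting that their classes $\overline{Z}_i$ are independent ``by the reduction'' of Corollary \ref{sec:cor-Q-fac}. No such independence is built into the matrix: the reduction there only eliminates divisors $Z\notin\supp\D$, and classes of distinct prime divisors in $\num Y$ are in general dependent (on $\PP^1$ they are all equal). The paper has to \emph{prove} independence: since $\D^\times_{Z_i}=\emptyset$, the restriction $\D(u)|_{Z_i}$ is not big for $u\in\relint\sigma^\vee$, so each such $Z_i$ is an exceptional prime divisor of the birational projective morphism $Y\rightarrow\proj\bigl(\bigoplus_{j\geq 0}\Gamma(Y,\CO(j\cdot\D(u)))\bigr)$, and exceptional divisors of such a morphism have numerically independent classes. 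Finally, your appeal to Lemma \ref{picard} to identify numerical and linear equivalence on $Y$ is beside the point: that lemma concerns invariant Cartier divisors on $X[\D]$, while the matrix is formulated in $\num Y$ from the outset.
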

\begin{proof}
  We choose a non-extremal ray $\rho \in \tail \D$ and a maximal
  cone $\delta$ from the normal quasifan of $\D$, such that $\rho^\perp \cap \delta$ is a  facet, and we denote this facet by $\tau$.

  We have a linear map $F:u \mapsto \overline{\D(u)} \in \QQ^r \cong
  \numq(Y)$. Now we choose any interior element $w \in \relint
  \delta$, hence $\D(u)$ is big by the properness of $\D$. We consider
  the subspaces
\[  V:= V_1 + V_2 \qquad V_1 := \lin{\overline{Z} \mid \D(w)|_{Z} \text{ is not big}} \qquad V_2 := \lin{F(\tau)}.
\]

We claim that $F(w) \notin V$. The semi-ample and big divisor $\D(w)$ defines a birational morphism 
\[\varphi:Y \rightarrow \proj \bigoplus_{i \geq 0} H^0(Y,i\cdot \D(w)).\]
By definition $\varphi_*\D(u)$ is ample, hence big and $\varphi$ contracts every prime divisor $Z$, such that $\D(u)|_Z$ is not big.

Let us assume that $\overline{\D(w)} \in V$. It follows that \[\varphi_* \overline{D(w)} \in \varphi_*(V)=\varphi_*(V_1)+\varphi_*(V_2)=0+\varphi_* V_2.\]
But since $V_2$ does not contain any big class the same is true for $\varphi_*V_2$, but this contradicts the ampleness of $\varphi_*\D(w)$.

Now we choose a basis $B$ of $V$ and complement $\{\overline{\D(w)}\} \cup B$ to get a basis of $\numq Y$. This leads to a coordinate map $x_1:\numq \rightarrow \QQ$ corresponding to the basis element $\overline{\D(w)}$.  For every $Z_i$ there is a vertex $v_i \in \D_{Z_i}$ such that $\bangle{w,\cdot}$ is minimized at this vertex. Now we sum up the corresponding rows in the matrix with multiplicity $\nicefrac{x_1(Z_i)}{\mu(v_i)}$ (by choice of the matrix all non-extremal vertices $v_i$ have $x_1(Z_i)=0$) and get
\[
\textstyle \left(x_1(\overline{Z_1}),\ldots, x_1(\overline{Z_r}),v_\rho \right).
\]

Where, $v_\rho:=\sum_i x_1(\overline{Z_i})\cdot v_i$. By construction we have $x_1(F(u)) = \bangle{u,v_\rho}$ for $u \in \delta$. Since $x_1(F(u)) = 0$ and $x_1(F(u+\alpha w)) = \alpha$ for $u \in \tau$ and $\alpha >0$, it follows that $v_\rho \in \relint \rho$.

Now assume that $\sum_i \lambda_i c_i = 0$, where $c_i$ are the columns of the matrix. Then for every extremal ray $\rho$ we get $\sum_{i=1}^n  \lambda_{r+i} \cdot(n_\rho)_i =0$, where $(n_\rho)_i$ denotes the $i$-th coordinate of the primitive generator of $\rho$. 
Since $\sum_{i=1}^r \lambda_i \cdot \overline{Z}_i = 0$ holds because
of the first rows of the matrix we get $\sum_{i=1}^n  \lambda_{r+i}
\cdot(v_\rho)_i =0$ for every non-extremal ray of $\D$. The fact that
the tail cone $\tail \D$ has maximal dimension implies that
$\lambda_{r+1},\ldots,\lambda_{r+n}$ are zero. 

Let us assume that the first $r'$ columns correspond to prime divisors
with $\D^\times_Z \neq \emptyset$. By construction of the matrix these
columns have staircase structure. Hence,  the coefficients
$\lambda_1,\ldots, \lambda_{r'}$ vanish. The remaining columns are  of
the form $\overline{Z}_i \choose 0$. Since
the sets of exceptional vertices $\D^\times_{Z_i}$ are empty, $\D(u)|_{Z_i}$
is not big for every $u \in \relint \sigma^\vee$. Hence, the $Z_i$ are exceptional prime divisor of the birational projective map
\[
\vartheta_u: Y \rightarrow \proj \left( \bigoplus_{j \geq 0} \Gamma\left(Y,
  \CO(j \cdot \D(u))\right)\right).
\]
In particular, their images in $\num Y$ are linearly independent,
which completes the proof.
\end{proof}

Let us assume that $X$ is $\QQ$-Gorenstein. Remember that, for a birational proper morphism $r:\tX \rightarrow
X$, we have a canonical divisor $K_\tX$ on $\tX$ such that
$\text{Discr}(r)=K_\tX - r^*K_X$ is supported only at the exceptional
divisor $\sum_i E_i$. The coefficient of $\text{Discr}_r$ are called
discrepancies of $r$. The discrepancies of a pair $(X,B)$, consisting
of a normal variety and $\QQ$-Cartier divisor, are the coefficients of
$\text{Discr}(r,B):=K_\tX - r^*(K_X + B)$. With this
notation we have
\[\text{Discr}(r'\circ r) = \text{Discr}(r',-\text{Discr}(r)).\]

Consider an SNC polyhedral divisor $\D$. Fix $y \in Y$ and prime divisors $Z_1, \ldots, Z_n$ intersecting transversally at $y$ and containing the prime divisors of the support of $\varphi^* \D$ meeting $y$. From section~\ref{sec:can-desing} we know that the formal neighborhood of every fiber $\tX_y$ of $\tX[\D] \rightarrow Y$ is isomorphic to that of of a closed subset of a toric variety corresponding to some cone $\sigma_y' \in N_\QQ \otimes \QQ^{\dim Y}$. Moreover, the isomorphism identifies  $\widetilde{D}_{Z_i,v}$ and 
$V(\QQ_{\geq 0}(v,e_{i}))$ as well as  $E_\rho$ and $V(\rho \times \mathbf{0})$. 

Now we may calculate a representation $K_X= \pi^*H + \divisor(\chi^u)$  of the canonical divisor on $X$ by solving a system of linear equations as in proposition~\ref{sec:prop-calc-K}. Here, $H = \sum_Z a_Z \cdot Z$ is a principal divisor on $Y$. Having such a representation we get the discrepancies of 
$\tX[\D] \rightarrow X[\D]$ at $\widetilde{D}_{Z,v}$ or $\widetilde{E}_\rho$, respectively as
\begin{equation}
  \label{eq:discr}
  \text{discr}_{Z,v}=\mu(v)(b_Z - a_Z -\langle u,v \rangle + 1) -1 ,\qquad \text{discr}_{\rho}=-1-\langle u,n_\rho \rangle.
\end{equation} 

We may also consider a toroidal desingularization
$\varphi:\overline{X} \rightarrow \tX[\D]$, obtained by toric
desingularisations of the $X_{\sigma_y'}$. Since the discrepancies
$\text{discr}_{Z,v}$ vanish for $Z \notin \supp \D$, the discrepancies
divisor on $\tX[\D]$ corresponds to a toric divisor $B \subset X_{\sigma_y'}$ and we are able to calculate the discrepancy divisor $\text{Discr}(\varphi,B)$ by toric methods.

We say that $X$ or $(X,B)$, respectively, is log-terminal, if for a desingularization the discrepancies are
$>-1$. It's an easily checked fact that a toric pair $(X_\sigma,B)$ is log terminal as long as $B < \sum_\rho V(\rho)$ and $B-\sum_\rho V(\rho)$ is $\QQ$-Cartier. We may argue as in the proof of \cite[Lem.~5.1]{MR2017225}: Since $K_{X_\sigma} = -\sum_\rho V(\rho)$ the $\QQ$-divisor $B+K_{X_\sigma}$ corresponds to an element $u \in M_\QQ$ such that $\langle u, n_\rho\rangle <0$ for every $\rho \in \sigma(1)$. But then the primitive generator $n_{\rho'}$ of a ray $\rho'$ in a subdivision $\Sigma$ of $\sigma$ is a positive combinationation of primitive generators $n_\rho$ of rays of $\sigma$. Hence,  $\langle u, n_{\rho'}\rangle <0$ holds. But now we have $\text{discr}_{V(\rho)}=-1-\langle u, n_{\rho'} \rangle > -1$.

A $\QQ$-divisor $B=\sum_{Z} b_Z \cdot Z$ is called a boundary divisor if
for the coefficients we have $0 < b_Z \leq 1$. For a polyhedral
divisor on $Y$ we define the boundary divisor $B:=\sum_Z
\frac{\mu_Z-1}{\mu_Z}\cdot Z$ on $Y^\circ := Y \setminus \bigcup_{Z,
  \D_Z^\times = \emptyset} Z$, where $\mu_Z$ is defined as $\max
\{\mu(v) \mid v \in \D^\times_Z\}$.

\begin{theorem}
\label{sec:thm-log-terminal}\ 
\begin{enumerate}
\item 
  If $X[\D]$ is log-terminal then 
there exists a boundary divisor $B' \geq B$,
such that $(Y^\circ, B')$ is weakly log-Fano (but not necessarily complete). In particular, $(Y^\circ, B)$ is a log-terminal pair and $-(K_{Y^\circ} +B)$ is big.
\item \label{item:cample}
Assume that $\D$ is completely ample and $X[\D]$ is $\QQ$-Gorenstein, then $X[\D]$ is log-terminal if and only if $(Y,B)$ is log-Fano.
\end{enumerate}
\end{theorem}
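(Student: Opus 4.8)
The plan is to reduce the log-terminality of $X=X[\D]$ to a toric computation on the toroidal model $\tX$, and then to read off the result as a positivity statement for $-(K_Y+B)$. Since log-terminality presupposes that $X$ is $\QQ$-Gorenstein, Proposition~\ref{sec:prop-calc-K} furnishes $u\in M_\QQ$ and a principal divisor $H=\divisor(f)=\sum_Z a_Z\cdot Z$ with $K_X=\pi^*H+\divisor(\chi^u)$. As $\D$ is SNC, Proposition~\ref{toroidal} makes $\tX$ toroidal; composing $r\colon\tX\to X$ with a toric resolution $r'$ and using $\text{Discr}(r'\circ r)=\text{Discr}(r',-\text{Discr}(r))$, log-terminality of $X$ is equivalent to that of the locally toric pair $(\tX,-\text{Discr}(r))$. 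Feeding the discrepancy formulas~\eqref{eq:discr} into the toric criterion recalled before the statement (sufficiency from that criterion, necessity because each non-extremal $\widetilde{E}_\rho$, $\widetilde{D}_{Z,v}$ is itself exceptional and so must have discrepancy $>-1$), I obtain that $X$ is log-terminal if and only if
\[\langle u,n_\rho\rangle<0\ \text{ for every }\rho\in\sigma(1),\qquad \langle u,v\rangle<b_Z-a_Z+1\ \text{ for every vertex }v\in\D_Z^{(0)}.\]
The first family of inequalities means exactly that $-u\in\relint\sigma^\vee$.

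To translate this to $Y$, I use that on extremal vertices the equations defining $u$ give $\langle u,v\rangle=b_Z-a_Z+1-\tfrac1{\mu(v)}$, so the extremal vertex realizing $\mu_Z=\max\{\mu(v)\mid v\in\D_Z^\times\}$ yields $\max_v\langle u,v\rangle\ge b_Z-a_Z+1-\tfrac1{\mu_Z}$, while the log-terminal inequalities give $\max_v\langle u,v\rangle<b_Z-a_Z+1$. Since $-u\in\relint\sigma^\vee$, the minimum of $\langle -u,\cdot\rangle$ over the $\sigma$-polyhedron $\Delta_Z$ is attained at a vertex, whence $\D(-u)_Z=-\max_v\langle u,v\rangle$; comparing coefficients gives an effective divisor
\[E:=-(K_{Y^\circ}+B)-\big(\D(-u)+\divisor(f^{-1})\big),\qquad 0\le E_Z<\tfrac1{\mu_Z}.\]

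For part~(i) I set $B':=B+E$, a boundary divisor with $B'\ge B$ and coefficients $1-\tfrac1{\mu_Z}+E_Z<1$. Then $-(K_{Y^\circ}+B')=\D(-u)+\divisor(f^{-1})\sim_\QQ\D(-u)$ is nef and big by the two properness conditions together with $-u\in\relint\sigma^\vee$, and $(Y^\circ,B')$ is klt since $Y^\circ$ is smooth and $B'$ is SNC with coefficients $<1$; hence $(Y^\circ,B')$ is weakly log-Fano. Consequently $(Y^\circ,B)$ is klt and $-(K_{Y^\circ}+B)=\D(-u)+\divisor(f^{-1})+E$ is big, being the sum of the big divisor $\D(-u)$ and the effective divisor $E$. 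For part~(ii), complete ampleness forces every vertex to be extremal (for $u$ in the interior of $\mathcal{N}(\Delta_Z,v)\subseteq\relint\sigma^\vee$ the divisor $\D(u)|_Z$ is ample, hence big), so $Y^\circ=Y$, $\max_v\langle u,v\rangle=b_Z-a_Z+1-\tfrac1{\mu_Z}$ and $E=0$, giving $-(K_Y+B)=\D(-u)+\divisor(f^{-1})$. Thus if $X$ is log-terminal then $-u\in\relint\sigma^\vee$, so $\D(-u)$ is ample and $(Y,B)$ is log-Fano.

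The converse in part~(ii) is the step I expect to be the main obstacle: I must deduce $-u\in\relint\sigma^\vee$ from ampleness of $-(K_Y+B)$. The plan is first to show that under complete ampleness $\D(w)$ is ample exactly for $w\in\relint\sigma^\vee$ (on a boundary facet $\sigma^\vee\cap\rho^\perp$ the divisor $\D(w)$ is either non-big, when $\rho\notin\D^\times$, or big but contracted, using the Chow-quotient interpretation of complete ampleness from the Remark); this settles the case $-u\in\sigma^\vee$. The genuinely delicate case is $-u\notin\sigma^\vee$, i.e.\ $\langle u,n_{\rho_0}\rangle>0$ for some necessarily non-extremal ray $\rho_0$, for then $-(K_Y+B)=\sum_Z(-\max_v\langle u,v\rangle)\,Z$ is the finite vertex-minimum divisor rather than $\D(-u)$. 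To exclude this I would take $w_0\in\relint(\sigma^\vee\cap\rho_0^\perp)$, whose semiample non-big divisor $\D(w_0)$ defines a contraction, and a curve $C_0$ in one of its fibres; on the maximal cone $\delta_0$ of the normal quasifan adjacent to that facet, $\D(w)\cdot C_0=\langle w,\xi\rangle$ with $\xi=\sum_Z(Z\cdot C_0)\,v_Z(\delta_0)$ vanishing on $\rho_0^\perp$ and positive on $\relint\delta_0$, forcing $\xi\in\QQ_{>0}\,n_{\rho_0}$ and hence $-(K_Y+B)\cdot C_0=\langle -u,\xi\rangle=-\lambda\langle u,n_{\rho_0}\rangle\le0$, contradicting ampleness. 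The crux is that this last intersection identity requires the vertices minimizing $\langle -u,\cdot\rangle$ to coincide with those defining $\xi$ along $C_0$, which is automatic for $-u\in\sigma^\vee$ but needs a separate convexity argument when $-u$ lies outside the cone.
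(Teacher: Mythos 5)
Your reduction of log-terminality to the two families of inequalities via the toroidal model, your construction of $B'=B+E$ in part (i), and the forward direction of (ii) all track the paper closely (your shortcut of getting klt-ness of $(Y^\circ,B')$ from ``smooth plus SNC boundary with coefficients $<1$'' is a legitimate alternative to the paper's discrepancy-transfer formula (\ref{eq:discr-vE})). But the converse of (ii) -- the step you yourself flag -- is genuinely missing, and it is the heart of the theorem. When $-u\notin\sigma^\vee$, the divisor $-(K_Y+B)$ equals $-\pi^*H-\sum_Z\max_v\langle u,v\rangle\cdot Z$ and is \emph{not} an evaluation of $\D$, so your identity $-(K_Y+B)\cdot C_0=\langle -u,\xi\rangle$ has no justification: the vertices maximizing $\langle u,\cdot\rangle$ need not coincide with the $\delta_0$-minimizers $v_Z(\delta_0)$. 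The paper closes exactly this case by a different mechanism: decompose $-w=\alpha u+u_\tau$ with $u\in\relint\delta$ and $u_\tau\in\tau=\rho^\perp\cap\delta$, note the coefficientwise bound $-K_Y-B\le \D(u_\tau)+\alpha\,\D(u)$ up to linear equivalence, and use that the left side is big while $\D(u_\tau)$ is not, forcing $\alpha>0$ and hence $\langle -w,n_\rho\rangle>0$. (Your route is repairable: you only need the \emph{inequality} $-(K_Y+B)\cdot C_0\le\langle-u,\xi\rangle$, which follows because $\sum_Z\bigl(\langle u,v_Z\rangle-\langle u,v_Z(\delta_0)\rangle\bigr)Z$ is effective and $C_0$ may be chosen in a general fibre of the contraction given by $\D(w_0)$, not contained in that support. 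But as written, the proof stops short of the theorem's hardest claim.)

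A second gap: you assume $\D$ is SNC throughout, while the theorem does not; part (ii) assumes only complete ampleness, and you cannot reduce to the SNC case by pulling back along a log resolution $\varphi:\widetilde{Y}\to Y$ (Lemma \ref{proj}), since that replaces $Y$ -- and hence the pair $(Y,B)$ appearing in the statement -- and destroys complete ampleness. Without SNC, both of your uses of it fail: the sufficiency half of your equivalence (Lemma \ref{sec:lem-log-terminal} requires SNC) and the klt claim for $(Y^\circ,B')$. The paper instead transfers discrepancies between $X$ and $(Y^\circ,B')$ over arbitrary modifications of $Y$ via (\ref{eq:discr-vE}); this is also precisely where the klt half of the log-Fano hypothesis in (ii) is consumed -- your argument never uses it, a sign that you are proving the statement only under the extra SNC hypothesis, where klt-ness of $(Y,B)$ is automatic. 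Finally, a smaller error: your auxiliary claim that under complete ampleness $\D(w)$ is ample exactly for $w\in\relint\sigma^\vee$ is false; e.g.\ for $\sigma=\pos((1,0),(0,1))$ and $\D=\bigl((1,1)+\sigma\bigr)\cdot[0]$ on $\PP^1$ one has $\D(u)=(u_1+u_2)\cdot[0]$, ample on the boundary rays of $\sigma^\vee$ as well. What you actually need, and what is true, is that $\D(w)$ fails to be big (hence ample) whenever $w\in\sigma^\vee\cap\rho^\perp$ for a \emph{non-extremal} ray $\rho$, together with the observation that $-u\in\partial\sigma^\vee$ forces such a $\rho$, since the system (\ref{the-monster}) makes extremal rays pair to $1$ with $-u$.
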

\begin{proof}
  Let us choose a representation $\D=\sum_{Z \in I} \Delta_Z \cdot Z$, with $I$ being a finite set of Cartier divisors on $Y$.

Let $K_X = \pi^*H +  \divisor(\chi^w)$ a representation as above. By (\ref{eq:KX}) we have 
\begin{equation}
  \label{eq:boundary-formula}
  K_Y + B  = \pi^*H + \sum_Z \langle w, v_z \rangle\cdot Z,
\end{equation}
here $v_Z \in \D_Z^\times$ denotes the vertex where $\mu$ obtains its maximum. 

For any ray $\rho \in \sigma(1)$ the value $\langle w, n_\rho \rangle$ has to be negative because of the condition $\text{discr}_\rho=-1-\langle w, n_\rho \rangle > -1$ for non-extremal rays or $\langle w, n_\rho \rangle = -1$ for extremal rays, respectively. It follows that $-w \in \relint(\sigma^\vee)$.

Now we consider the divisor $B' > B$ defined by
\begin{equation}
  \label{eq:boundary-formula-prime}
  K_Y + B'  = \pi^*H - \D(-w) = \pi^*H + \sum_Z \langle w, v'_z \rangle\cdot Z,
\end{equation}
here $v'_Z$ denotes the vertices in $\D_Z$, where $-w$ is minimized.
Since $D(-w)$ is semi-ample and big this implies the weak Fano property  for the pair $(Y^\circ,B')$.



Now consider a birational proper morphism $\varphi: \widetilde{Y}
\rightarrow Y$.
And denote $\tX[\varphi^*\D]$ by $\tX$. Consider a prime divisor $E
\subset \widetilde{Y}$ and denote by $(\varphi^*Z)_E$ the coefficient
of $\varphi^*Z$ at $E$. Note that $v'_E:=\sum_Z (\varphi^*Z)_E \cdot
v'_Z$ is a vertex in $(\varphi^*\D)_E$.  If $v'_E$ is non-extremal, by
(\ref{eq:discr}) we get for the discrepancy $\text{discr}_{v'_E}$
\begin{align}
  \text{discr}_{v'_E} &= \mu(v'_E)\left((K_{\widetilde{Y}})_E-
(\varphi^*H)_E - \langle w, v'_E \rangle + 1\right) -1 \label{eq:discr-vE} \\
{ } &= \mu(v'_E)\left((K_{\widetilde{Y}})_E- \varphi^*(K_Y+B')_E + 1 \right)-1. \nonumber
\end{align}
For the case that $E$ is an exceptional divisor of $\varphi$ this proves the log-terminal property for $(Y^\circ,B')$. If $\varphi=\text{id}$ and $E=Z$ is a prime divisor of $Y$ and $v_E'=v_Z'$ is non-extremal we obtain $B'_Z < 1$. But, for $v'_Z$ extremal we have $B'_Z = \frac{\mu(v_Z')-1}{\mu(v'_Z)} <1$ by (\ref{eq:KX}). Hence, $B'$ is indeed a boundary divisor.

If $\D$ is completely ample all vertices are extremal. Hence, we have
$Y^\circ = Y$ and $B'=B$. Since, $\D(-w)$ is even ample this proves
one direction of (\ref{item:cample}) in the Theorem.

For the other direction we first show that the Fano property for
$(Y,B)$ implies that $-w \in \sigma^\vee$. For extremal rays $\rho \in
\D^\times$ we have $\langle w, n_\rho \rangle= -1$ by (\ref{eq:discr}).  For a non-extremal ray $\rho$ we consider a maximal
chamber of linearity $\delta \subset \sigma^\vee$ such that $\tau
=\rho^\perp \cap \delta$ is a facet. This corresponds to a family of
vertices $v^u_Z$ such that $\D(u) = \sum_Z \langle u, v_Z^u\rangle
\cdot Z$ for $u \in \relint \delta$. Now there exists a decomposition
$-w = \alpha \cdot u + u_\tau$ such that $u_\tau \in \tau$ and $u \in
\relint \delta$.  Hence, we have
\[
-K_Y-B \;\leq\; -\pi^*H + \sum_Z \langle -w, v_Z^u \rangle \cdot Z
\;\sim\; \D(u_\tau) + \alpha \D(u).
\]
By our precondition $-K_Y-B$ is big. This implies that the right hand side is big, too. Then we must have $\alpha > 0$, since $\D(u)$ is big, but $\D(u_\tau)$ is not. By $\langle -w, n_\rho \rangle = \alpha \cdot \langle u, n_\rho \rangle$
we conclude that $\langle -w, n_\rho \rangle > 0$ and hence 
$\text{discr}_\rho = -1 - \langle w, n_\rho \rangle> -1$ and $-w \in \sigma^\vee$.  Let $\varphi:\widetilde{Y} \rightarrow Y$ be a desingularization such that $\varphi^* \D$ is SNC. By equation (\ref{eq:discr-vE}) we infer that $\text{disc}_{v_E} > -1$ for every exceptional divisor $E$ and every  vertex $v_E \in (\varphi^*\D)_E$. By lemma~\ref{sec:lem-log-terminal} this completes the proof.
\end{proof}

\begin{remark}
Let $\D$ be a proper polyhedral divisor on some projective variety
$Y$. If  $Y$ is a curve, or more general $Y$ has Picard rank one or
$\rank N=1$ the $\D$ is always completely ample. In these cases we
obtain a sufficient and necessary criterion for log-terminality.

As a special case of the theorem we recover the well known fact, that the log-terminal property a section ring characterizes log-Fano varieties.
\end{remark}

\begin{lemma} \label{sec:lem-log-terminal} Let $\D$ be an SNC
  polyhedral divisor. Then $X[\D]$ is log-terminal iff the
  discrepancies of $\psi: \tX[\D] \rightarrow X[\D]$ are all greater
  than $-1$.
\end{lemma}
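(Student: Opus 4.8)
The plan is to factor a resolution of $X=X[\D]$ through the partial resolution $\psi\colon\tX[\D]\to X[\D]$ and to exploit the fact that, for SNC $\D$, the intermediate variety $\tX$ is (formally locally) toric; this reduces the passage from $\psi$ to a genuine resolution to the toric log-terminality criterion recalled just before the lemma. First I would set up the factorization. Since $\D$ is SNC, Proposition~\ref{toroidal} shows $\tX=\tX[\D]$ is toroidal, so by Construction~\ref{sec:constr-desing} there is a toric desingularization $\varphi\colon\overline X\to\tX$, and $\psi\circ\varphi\colon\overline X\to X$ is a resolution of $X$. I put $B:=-\text{Discr}(\psi)=\psi^*K_X-K_{\tX}$, an invariant $\QQ$-divisor on $\tX$ supported on the $\psi$-exceptional divisors, i.e.\ on the non-extremal $\widetilde E_\rho$ and $\widetilde D_{Z,v}$, whose coefficients are the negatives of the discrepancies~\eqref{eq:discr}. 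The composition rule for discrepancies stated above, applied to $\psi$ and $\varphi$, then yields
\[
\text{Discr}(\psi\circ\varphi)=\text{Discr}(\varphi,B),
\]
so the discrepancies of the resolution $\psi\circ\varphi$ of $X$ are exactly the discrepancies of the pair $(\tX,B)$ along $\varphi$.

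For the substantive (``if'') direction, suppose the discrepancies of $\psi$ are all $>-1$. Then every coefficient of $B$ is $<1$, and in each formal toric chart $X_{\sigma'_y}$ of $\tX$ the divisor $B$ is carried by the toric boundary divisors $V(\rho)$, so that $B<\sum_\rho V(\rho)$. Moreover $K_{\tX}+B=\psi^*K_X$ is $\QQ$-Cartier because $X$ is $\QQ$-Gorenstein; using $K_{X_{\sigma'_y}}=-\sum_\rho V(\rho)$ this says precisely that $B-\sum_\rho V(\rho)$ is $\QQ$-Cartier. The toric criterion recalled before Lemma~\ref{sec:lem-log-terminal} then applies and shows that $(\tX,B)$ is log-terminal, i.e.\ $\text{Discr}(\varphi,B)>-1$. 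By the displayed equality this gives $\text{Discr}(\psi\circ\varphi)>-1$, whence $X$ is log-terminal.

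The converse (``only if'') direction is the easy one: if $X$ is log-terminal then, since log-terminality is independent of the chosen resolution, all discrepancies along $\psi\circ\varphi$ are $>-1$; the non-extremal divisors $\widetilde E_\rho,\widetilde D_{Z,v}$ survive on $\overline X$ (a toric subdivision keeps all original rays) as exceptional divisors over $X$, and their discrepancies there equal the values~\eqref{eq:discr} of $\psi$, which are therefore $>-1$ — while the extremal divisors are non-exceptional and contribute discrepancy $0$. The main obstacle is thus concentrated in the ``if'' direction: one must rule out that enlarging the partial resolution $\psi$ to a full resolution pushes some discrepancy down to $-1$ or below. Toroidality is exactly what removes this obstacle, since it turns the question into a purely toric one to which the composition rule and the toric criterion apply; the technical heart is the verification that $(\tX,B)$ genuinely meets the hypotheses of that criterion, namely that $B$ is a toric boundary with coefficients $<1$ and that $K_{\tX}+B$ is $\QQ$-Cartier.
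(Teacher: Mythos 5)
Your proof is correct and takes essentially the same route as the paper's: factoring a full resolution as $\psi\circ\varphi$ with $\varphi$ a toroidal (toric) desingularization of $\tX[\D]$, setting $B=-\text{Discr}(\psi)$, invoking the composition rule $\text{Discr}(\psi\circ\varphi)=\text{Discr}(\varphi,B)$, and applying the toric log-terminality criterion recalled before the lemma in the formal toric charts. You merely make explicit some points the paper leaves implicit, namely the verification that $B$ is a toric boundary with $K_{\tX}+B$ $\QQ$-Cartier and the easy ``only if'' direction.
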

\begin{proof}
  The discrepancies of a toric map $X_{\Sigma} \rightarrow X_{\sigma}$ and an invariant boundary divisor $B < \sum_{\rho \in \sigma(1)} V(\rho)$ are greater than $-1$. By the toroidal structure of $\tX[\D]$ and the considerations above this implies that the discrepancies of a resolution $\varphi \circ\psi$ obtained as the composition of a toroidal resolution $\varphi$ of $\tX$ and $\psi$ has discrepancies $>-1$.
\end{proof}

\begin{corollary}
  Every $\QQ$-Gorenstein T-variety $X$ of complexity $c$ with singular locus of codimension $> c+1$ is log terminal.
\end{corollary}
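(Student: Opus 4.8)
The plan is to reduce the statement, via Lemma~\ref{sec:lem-log-terminal}, to a discrepancy estimate on the partial resolution and then to exploit that the singular locus is too small to contain the centers of the exceptional divisors. First I would fix an SNC polyhedral divisor $\DD$ with $X=X[\DD]$ (possible by the Corollary following Lemma~\ref{proj}); then $Y$ is smooth of dimension equal to the complexity $c$. Since $X$ is $\QQ$-Gorenstein, Lemma~\ref{sec:lem-log-terminal} reduces log-terminality of $X$ to showing that every discrepancy of $r\colon\tX=\tX[\DD]\to X$ is $>-1$. By Theorem~\ref{th:divisor} the $r$-exceptional prime divisors are precisely the $\widetilde E_\rho$ with $\rho\in\sigma(1)\setminus\DD^\times$ and the $\widetilde D_{Z,v}$ with $v\in\DD_Z^{(0)}\setminus\DD_Z^\times$; for each such $E$ write $C_E:=\overline{r(E)}\subseteq X$ for its center.

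The heart of the argument is the estimate $\codim_X C_E\le c+1$. I would deduce it from the fact that $r$ is the affinization morphism $\tX\to\spec\Gamma(\tX,\OO_{\tX})$, because $\Gamma(\tX,\OO_{\tX})=\Gamma(Y,\mathcal A[\DD])=A[\DD]$. The point is that this affinization contracts only ``base directions'' and collapses no $T$-orbit: properness of $\DD$ provides, for every $u\in\sigma^\vee_M$, a base-point-free and hence effective multiple of $\DD(u)$, so the weight monoid $\{u:H^0(Y,\OO(\DD(u)))\ne 0\}$ meets the relative interior of every face of $\sigma^\vee$. Restricting the corresponding semi-invariants to the generic orbit $O\cong T/T_O$ of $E$, their weights lying in $T_O^\perp$ span $T_O^\perp\otimes\QQ$, so these functions already separate $O$ and $\dim r(O)=\dim O$. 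As a $T$-invariant prime divisor has generic orbit of dimension at least $\dim T-1=n-1$ (otherwise it would fail to be of codimension one), we obtain $\dim C_E=\dim r(O)=\dim O\ge n-1$, that is $\codim_X C_E\le \dim X-(n-1)=\dim Y+1=c+1$.

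To finish, I would argue as follows. Since $E$ is $r$-exceptional, $C_E$ has codimension at least $2$, and by hypothesis $\operatorname{Sing}X$ has codimension $>c+1\ge\codim_X C_E$, so $C_E\not\subseteq\operatorname{Sing}X$ and $X$ is smooth at the generic point of $C_E$. The discrepancy of $E$ over $X$ is computed at this generic point, where $X$ is smooth; there the discrepancy of an exceptional divisor whose center has codimension $k\ge 2$ is at least $k-1\ge 1>-1$. Thus all discrepancies of $r$ exceed $-1$, and Lemma~\ref{sec:lem-log-terminal} yields that $X$ is log-terminal.

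The main obstacle is the codimension estimate, i.e. making precise that the affinization $r$ collapses no $T$-orbit; concretely one must check, for the generic orbit of each $\widetilde E_\rho$ and $\widetilde D_{Z,v}$, that the weights of the non-vanishing semi-invariants span the full character lattice of $T/T_O$. For $\widetilde D_{Z,v}$ this uses that the normal cone $\mathcal N(\DD_Z,v)$ of the vertex $v$ is full-dimensional inside $\sigma^\vee$ (giving $\codim_X C_E\le c$), and for $\widetilde E_\rho$ that $\rho^\perp\cap\sigma^\vee$ is a facet; in both cases properness supplies the required effective evaluations.
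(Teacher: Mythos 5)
Your proposal is correct and follows the same skeleton as the paper's proof: reduce log-terminality, via Lemma~\ref{sec:lem-log-terminal}, to the discrepancies of $r\colon\tX[\DD]\to X[\DD]$; bound the codimension of the centers of the $r$-exceptional divisors by $c+1$ (respectively $c$ for the $\widetilde D_{Z,v}$); and play this off against the hypothesis $\codim \operatorname{Sing}X > c+1$. The difference lies in how the codimension bound is obtained and how the conclusion is phrased. The paper simply cites the orbit decomposition of $X[\DD]$ from \cite{MR2207875} to assert that $r(\widetilde E_\rho)$ has codimension at most $c+1$ and $r(\widetilde D_{Z,v})$ at most $c$, and then argues contrapositively that a divisor of discrepancy $\le -1$ would have its center inside the singular locus. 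You instead prove the codimension estimate from scratch: since $r$ is the affinization morphism, properness of $\DD$ (base-point-free multiples of $\DD(u)$) produces, for every $u$ in the facet $\sigma^\vee\cap\rho^\perp$ (resp.\ in the full-dimensional normal cone $\mathcal N(\DD_Z,v)\cap\sigma^\vee$), semi-invariants of some multiple weight not vanishing on the generic orbit, so the weights available on the image orbit have rank $n-1$ (resp.\ $n$) and the orbit is not collapsed; and you make explicit the standard fact that over a smooth generic point the discrepancy is at least $\codim C_E - 1\ge 1$. This buys a self-contained argument where the paper relies on a citation, at the cost of some bookkeeping that should be tightened: $\dim C_E = \dim r(O)$ should be the inequality $\dim C_E \ge \dim r(O)$ (which is all you need); the genericity of the orbit must be fixed before invoking base-point-freeness (harmless, since base-point-freeness gives non-vanishing sections at \emph{every} point, or one can argue with the weight monoid of $A/\mathfrak p$ for $\mathfrak p$ the prime of the divisor instead of with a specific orbit); and, like the paper, you should open with the one-line reduction to the affine case via Sumihiro's theorem, since the statement concerns an arbitrary T-variety while $X=X[\DD]$ presumes $X$ affine.
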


\begin{proof} We may assume that $X$ is affine. Given a SNC polyhedral
  divisor for $X$ we consider an exceptional divisors
  $\widetilde{D}_{Z,v},\;\widetilde{E}_{\rho} \subset \tX[\D]
  \rightarrow X$ with discrepancy $\leq -1$.  By the orbit
  decomposition of $X[\D]$ given in \cite{MR2207875} we know that
  $\widetilde{E}_{\rho}$ is contracted via $r$ to a closed subvariety
  of codimension at most $c+1$ in $X[\D]$ and $\widetilde{D}_{Z,v}$ to
  a subvariety of codimension at most $c$. But
  $r(\widetilde{E}_{\rho})$, $r(\widetilde{D}_{Z,v}) $ are necessarily part of the singular locus. 
\end{proof}


\section{Complexity one} \label{sec:complexity-one}

As an application, in this section we restate our previous results in this particular setting. This allows us to rediscover some well known results with our methods.

Let $\DD$ be a proper polyhedral divisor on $Y$. If the corresponding
$\TT$-action on $X=X[\DD]$ has complexity 1 then $Y$ is a curve. Since
any normal curve is smooth and any birational morphism between smooth
curves is an isomorphism we have that the base curve $Y$ is uniquely
determined by the $\TT$-action on $X$.

Furthermore, any curve $Y$ is either affine or projective, and any polyhedral divisor $\DD$ on $Y$ is SNC and completely ample. Let $\DD$ and $\DD'$ be two proper polyhedral divisors on $Y$. Then $X[\DD]\simeq X[\DD']$ equivariantly if and only if the application $$\Delta:\sigma^\vee\rightarrow \operatorname{Div}_\QQ(Y),\qquad  u\mapsto \DD(u)-\DD'(u)$$
 is the restriction of a linear map and $\Delta(u)$ is principal for all $u\in\sigma_M^\vee$.

The simplest case is the one where $N=\ZZ$ i.e., the case of $\KK^*$-surfaces. In this particularly simple setting there are only two non-equivalent pointed polyhedral cones in $N_\QQ\simeq\QQ$ corresponding to $\sigma=\{0\}$ and $\sigma=\QQ_{\geq 0}$.

Assuming further that $Y$ is projective, then $\sigma\neq \{0\}$ and so we can assume that $\sigma=\QQ_{\geq 0}$. In this case $\DD(u)=u\DD(1)$. Hence $\DD$ is completely determined by $\DD_1:=\DD(1)$, and $X[\DD]\simeq X[\DD']$ equivariantly if and only if $\DD_1-\DD_1'$ is principal. We also let
\begin{align} \label{D1-desc}
\DD_1=\sum_{i=1}^{r} \frac{e_i}{m_i}\cdot z_i,\quad\mbox{where}\quad \gcd(e_i,m_i)=1,\mbox{ and }m_i>0\,.
\end{align}
In this case, the algebra $A[\DD]$ is also known as the section ring
of $\DD_1$.

\subsection{Isolated singularities}
\begin{proposition}
\label{sec:prop-D-smooth}
  A polyhedral divisor $\D$ having a complete curve $Y$ as its locus defines a smooth variety if and only if $Y=\PP^1$,
  $\D \sim \Delta_{y} \otimes P + \Delta_{z} \otimes Q$ 
  and
  $$\delta:=\overline{\QQ^+ \cdot (\{1\} \times \Delta_y \cup \{-1\} \times \Delta_z)} \subset \QQ\times N$$ 
  is a regular cone.
\end{proposition}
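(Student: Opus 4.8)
The plan is to reduce both implications to toric geometry by identifying $X[\D]$, in the two‑point case, with the affine toric variety of the cone $\delta$. Put $N'=\ZZ\times N$, $M'=\ZZ\times M$, and normalize coordinates on $Y=\PP^1$ so that $P=[0]$, $Q=[\infty]$ and $\divisor(t)=P-Q$ for the standard coordinate $t$. If $\D=\Delta_y\cdot P+\Delta_z\cdot Q$, then $\D(u)=h_y(u)\,P+h_z(u)\,Q$ with $h_y(u)=\min_{v\in\Delta_y}\bangle{u,v}$ and $h_z(u)=\min_{w\in\Delta_z}\bangle{u,w}$, and a direct computation as in Example~\ref{extor} shows $H^0(\PP^1,\OO(\D(u)))=\bigoplus_{-h_y(u)\le a\le h_z(u)}\KK\cdot t^a$. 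On the other hand $(a,u)\in\delta^\vee$ means exactly $a+\bangle{u,v}\ge 0$ for all $v\in\Delta_y$ and $-a+\bangle{u,w}\ge 0$ for all $w\in\Delta_z$, i.e. $u\in\sigma^\vee$ and $-h_y(u)\le a\le h_z(u)$. Matching the two descriptions gives an isomorphism $A[\D]\cong\KK[\delta^\vee\cap M']$, equivariant for $T'=\spec\KK[M']$, so that $X[\D]$ is the affine toric variety $X_\delta$ of $\delta\subset N'_\QQ$. Since a toric variety is smooth precisely when its cone is regular, this already settles the ``if'' direction and reduces the whole statement to showing that smoothness forces $Y=\PP^1$ together with $\D\sim\Delta_y\cdot P+\Delta_z\cdot Q$.

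For the ``only if'' direction, I would first note that a smooth variety has rational singularities, so Corollary~\ref{acyc} gives $H^1(Y,\OO_Y)=0$; as $Y$ is a complete smooth curve this forces $g(Y)=0$, i.e. $Y=\PP^1$. Next I would exploit the good $\CC^*$-action from $\relint\sigma$ (Section~\ref{sec:canonical-div}), using the standing assumption that $\sigma$ is full dimensional. Since $\D(0)=0$ and $Y=\PP^1$, we have $A_0=H^0(\PP^1,\OO_{\PP^1})=\KK$, and choosing $\lambda\in\relint\sigma$ turns $A=A[\D]=\bigoplus_{u\in\sigma^\vee_M}A_u$ into a connected, nonnegatively graded $\KK$-algebra with irrelevant maximal ideal $\m=\bigoplus_{u\ne 0}A_u$. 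Smoothness of $X$ means $A$ is regular, so $\dim_\KK\m/\m^2=\dim X=1+\rank N$. Picking $M$-homogeneous $x_0,\dots,x_n\in\m$ (where $n=\rank N$) whose residues form a basis of the $M$-graded space $\m/\m^2$, graded Nakayama shows they generate $A$; as $A$ is a domain of dimension $n+1$ generated by $n+1$ elements, they are algebraically independent and $A\cong\KK[x_0,\dots,x_n]$. Thus $X$ is equivariantly isomorphic to $\AF^{n+1}$ with a diagonal linear $T$-action, i.e. a smooth affine toric variety whose big torus $T'$ contains $T$ as a subtorus of complexity one.

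It remains to recognize the two‑point form. Because $X$ is toric and $T\subset T'$ has complexity one, its description as a $T$-variety is the toric downgrade: the base is the toric quotient curve, which by the uniqueness of the base in complexity one is our $Y=\PP^1$, and the polyhedral divisor of a downgrade is supported on the torus‑invariant prime divisors of the quotient, namely the two fixed points $P,Q$. By Theorem~3.4 of \cite{MR2207875} the Altmann--Hausen data of a given $T$-variety is unique up to equivalence, which in complexity one is the relation recalled at the beginning of this section ($u\mapsto\D(u)-\D'(u)$ the restriction of a linear, principal‑valued map); hence $\D\sim\Delta_y\cdot P+\Delta_z\cdot Q$ for suitable $\sigma$-polyhedra $\Delta_y,\Delta_z$. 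Applying the identification of the first paragraph to this equivalent datum yields $X\cong X_\delta$, and since $X$ is smooth the cone $\delta$ must be regular.

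The hard part is the two‑point reduction in the last paragraph: smoothness, rather than the weaker factoriality of Proposition~\ref{sec:prop-factorial}, is what collapses the support to two points. Factoriality alone does not suffice, as the factorial $E_8$ surface singularity $x^2+y^3+z^5$ genuinely requires three special points. The argument above routes this through the fact that a connected graded regular domain is a polynomial ring, so that $X$ becomes a smooth affine toric variety; the delicate points to justify carefully are that $A_0=\KK$, that the generators of $\m/\m^2$ can be chosen semi‑invariant, and the matching of the downgrade data with the explicit cone $\delta$ via the first‑paragraph computation.
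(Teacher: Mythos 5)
Your overall strategy coincides with the paper's: the ``if'' direction is the identification $A[\D]\cong\KK[\delta^\vee\cap M']$ (the paper simply cites \cite[Section 11]{MR2207875} where you compute it directly, which is fine), and the ``only if'' direction runs through the observation that a positively graded regular algebra is free, followed by recognizing the two-point datum. In fact your treatment of the last step (the toric downgrade plus the complexity-one uniqueness of the description) and your derivation of $Y=\PP^1$ from rational singularities via Corollary~\ref{acyc} are \emph{more} explicit than the paper, whose proof ends tersely with ``and $\D$ has the claimed form.''

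There is, however, a genuine gap: you invoke ``the standing assumption that $\sigma$ is full dimensional,'' but that hypothesis from Section~\ref{sec:canonical-div} is not in force here and is not part of the proposition. The paper's own proof is written precisely to avoid it: it takes a full sublattice $M'\subset\sigma^\perp$, shows $A_0\cong\KK[M']$ is a \emph{Laurent} polynomial ring (using that $\D(\pm u)$ has degree $0$ and a principal multiple for $u\in M'$), and then applies the accompanying lemma that a finitely generated, positively graded, regular algebra over $A_0=\KK[\ZZ^r]$ is freely generated over $A_0$. Your argument breaks exactly at this point when $\sigma^\perp\neq 0$: then $A_0\neq\KK$, there is no irrelevant maximal ideal with residue field $\KK$, and the chain ``$\dim_\KK\m/\m^2=\dim X$, graded Nakayama, $n+1$ generators $\Rightarrow$ polynomial ring'' does not apply. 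Such cases do occur within the proposition's scope: for $N=\ZZ^2$, $\sigma=\QQ_{\geq 0}\cdot(1,0)$, the divisor $\D=\bigl((1,0)+\sigma\bigr)\cdot[0]$ on $\PP^1$ is proper and $X[\D]\cong\KK^*\times\AF^2$ is smooth with $\KK[X]_0$ (for the grading by $\langle\,\cdot\,,\lambda\rangle$, $\lambda\in\relint\sigma$) a Laurent ring. The repair is exactly the paper's relative lemma: once you know $A$ is a polynomial ring over $\KK[M']$, your downgrade-and-uniqueness argument goes through verbatim, since $X$ is still toric (a torus times an affine space).
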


\begin{proof}
  One direction is obvious, because the polyhedral divisor $\D$ describes the toric variety $X_\delta$ with a $T_N$-action induced by
  $N \hookrightarrow \QQ\times N$ \cite[section~11]{MR2207875}. For the other direction note that for $v \in \relint \sigma \cap N$ we get a positive grading 
  $$\Gamma(X,\CO_X) = \bigoplus_{i \geq 0} \bigoplus_{\langle u,v \rangle = i} \Gamma(Y, \CO(\D(u))).$$ 
  There is a full sub-lattice $M' \subset \sigma^\perp$ such that $\D(u)$ is integral for $u \in M'$. Because $\D$ is proper we get
  $\D(u)$ and $\D(-u)$ to be of degree $0$ and even principal $\D(u)=\divisor(f_u)$ and $\D(-u)=-\divisor(f_u)$ for $u \in M'$.
  This implies that $\Gamma(X,\CO_X)_0 \subset K(X)$ is generated by $\{f^{\pm 1}_{u_1} \chi^{\pm u_1}, \ldots, f^{\pm 1}_{u_r} \chi^{\pm u_r}\}$ where the $u_i$ are elements of a $M'$-basis. We get $\Gamma(X,\CO_X)_0 \cong k[M']$. 

  The following lemma implies that 
  $$X = \spec k[M'][\ZZ_{\geq 0}^{\dim \sigma}]$$
  holds and $\D$ has the claimed form.
\end{proof}

\begin{lemma}
  Let $A_0=k[\ZZ^r]$, and $A = \bigoplus_{i\geq 0} A_i$ be a finitely generated positive graded $A_0$-algebra. $A$ is a regular ring if
  and only if $A$ is freely generated as an $A_0$-algebra. 
\end{lemma}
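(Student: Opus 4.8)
The plan is to prove both implications, with all the substance in the converse. The forward direction is immediate: if $A\cong A_0[y_1,\dots,y_d]$ is a polynomial ring over $A_0=k[\ZZ^r]$, then $A_0$ is regular and a polynomial extension of a regular ring is regular, so $A$ is regular. It remains to treat the ``only if'' part, so assume $A$ is regular.

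First I would record a few structural facts. Since $A_0=k[\ZZ^r]$ is finitely generated over $k$ and $A$ is a finitely generated $A_0$-algebra, $A$ is a finitely generated $k$-algebra. Comparing top degrees shows every idempotent of $A$ is homogeneous of degree $0$, hence lies in the domain $A_0$; thus $\spec A$ is connected and the regular Noetherian ring $A$ is a domain. Writing $d$ for the relative dimension, we have $\dim A=\dim A_0+d=r+d$, and every maximal ideal of the affine domain $A$ has height $\dim A$. Let $\mathfrak{m}_+=\bigoplus_{i>0}A_i$ be the irrelevant ideal. By graded Nakayama, homogeneous elements of $\mathfrak{m}_+$ generate $A$ as an $A_0$-algebra exactly when their residues generate the graded $A_0$-module $\mathfrak{m}_+/\mathfrak{m}_+^2$, so the crux is to show this module is free with a homogeneous basis.

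The key computation is the cotangent space at a torus point. For a closed point $p\in T:=\spec A_0$ with maximal ideal $\mathfrak{m}_p\subset A_0$, set $\mathfrak{M}_p=\mathfrak{m}_pA+\mathfrak{m}_+$, a maximal ideal of $A$ with residue field $k$. Decomposing $\mathfrak{M}_p/\mathfrak{M}_p^2$ by degree (using $\mathfrak{m}_p\subset A_0$ in degree $0$ and that the degree-zero part of $\mathfrak{M}_p^2$ is $\mathfrak{m}_p^2$), the degree-zero part is $\mathfrak{m}_p/\mathfrak{m}_p^2$, of dimension $r$ since $A_0$ is regular, and the positive-degree part is $(\mathfrak{m}_+/\mathfrak{m}_+^2)\otimes_{A_0}k(p)$. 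Regularity of $A_{\mathfrak{M}_p}$ gives $\dim_k\mathfrak{M}_p/\mathfrak{M}_p^2=\dim A=r+d$, so the fiber dimension of $\mathfrak{m}_+/\mathfrak{m}_+^2$ equals $d$ at every closed point of $T$. Since closed points are dense and fiber dimension is upper semicontinuous, this forces the constant value $d$ everywhere, so $\mathfrak{m}_+/\mathfrak{m}_+^2$ is locally free of rank $d$ over the reduced ring $A_0$.

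The main obstacle is to upgrade ``locally free'' to honestly free with a \emph{homogeneous} basis, and this is where a nontrivial input enters: I would invoke Swan's theorem, the analogue of Quillen--Suslin for Laurent polynomial rings, by which every finitely generated projective module over $A_0=k[\ZZ^r]$ is free. Applying this to each graded piece $(\mathfrak{m}_+/\mathfrak{m}_+^2)_i$ (a finite direct sum of projective $A_0$-modules) yields a homogeneous $A_0$-basis $\bar y_1,\dots,\bar y_d$; lifting to homogeneous $y_1,\dots,y_d\in\mathfrak{m}_+$ and applying graded Nakayama, these generate $A$ as an $A_0$-algebra. This produces a graded surjection $A_0[Y_1,\dots,Y_d]\twoheadrightarrow A$ of domains of equal dimension $r+d$; its kernel is a prime ideal which, in a finitely generated $k$-algebra domain, would strictly decrease the dimension if nonzero, so it vanishes and $A\cong A_0[Y_1,\dots,Y_d]$ as claimed.
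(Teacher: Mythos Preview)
Your proof is correct, and shares with the paper's argument the same core computation: at a maximal ideal $\mathfrak{M}_p=\mathfrak{m}_pA+\mathfrak{m}_+$ lying over a closed point $p$ of the torus, the cotangent space decomposes by degree into $\mathfrak{m}_p/\mathfrak{m}_p^2$ (of dimension $r$) and $(\mathfrak{m}_+/\mathfrak{m}_+^2)\otimes_{A_0}k(p)$, so regularity forces the latter to have dimension $d=\dim A-r$. The two arguments diverge in how this is exploited. You let $p$ vary, deduce that $\mathfrak{m}_+/\mathfrak{m}_+^2$ is locally free of rank $d$ over $A_0$, and then invoke Swan's theorem (the Laurent-polynomial analogue of Quillen--Suslin) to obtain a global homogeneous free basis before lifting and comparing dimensions. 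The paper instead works at a single point: it fixes a minimal homogeneous generating set $g_1,\dots,g_l$, and argues that if $l>d$ then the tangent-space count forces a homogeneous relation of the form $g_i=\sum_{|\alpha|\ge 2}b_\alpha g^\alpha$ with $b_\alpha\in A_0$, which (after discarding monomials involving $g_i$ by degree reasons) contradicts minimality; hence $l=d$ and the surjection from a polynomial ring of the same dimension is an isomorphism. The paper's route is more elementary---it needs no input of Quillen--Suslin type---while yours is cleaner about the global structure and makes explicit the connectedness and domain arguments that the paper leaves implicit.
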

\begin{proof}
  We may choose a minimal homogeneous generating system $g_1, \ldots, g_l$ for $A$ and consider a maximal ideal $\m_0$ of $k[\ZZ^r]$ then
  $\m := A\m_0 + (g_1,\ldots, g_l)$ is a maximal ideal of $A$. If $A$ is regular and $l > \dim A - r$ then 
$\dim_k \m/\m^2 = \dim A < r + l$ and w.l.o.g. we may assume that there is a $g_i$ such that there is a (finite) homogeneous relation 
  $g_i = \sum_{|\alpha|\geq 2} b_\alpha g^\alpha$, with $b_\alpha \in A_0$, $\alpha \in \NN^l$. For degree reasons we may assume that $b_\alpha = 0$ 
  if $\alpha_i \neq 0$. But this contradicts the minimality of $\{g_1,\ldots, g_l\}$. So we have $l = \dim A - r$, thus $A$ is free as an $A_0$-algebra.
\end{proof}

\begin{theorem}
\label{sec:thm-toric-sing}
Let $\D = \sum_z \Delta_z \cdot z$ be a polyhedral divisor on an
affine curve. Then $\tX[\D]$ is smooth if and only if for every $z \in
Y$ the cone
$$\delta_z := \overline{\QQ^+ \cdot (\{1\} \times \Delta_z)} \subset \QQ\times N$$
is regular. Moreover the singularity at $q^{-1}(z)$ is analytically isomorphic to the toric singularity $X_{\delta_z}$.
\end{theorem}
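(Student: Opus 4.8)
The plan is to reduce the global statement about $\tX[\D]$ over an affine curve $Y$ to a purely local, toric computation fiber by fiber, and then invoke Proposition~\ref{toroidal} together with Example~\ref{extor}. First I would observe that since $Y$ is an affine curve, it is smooth and every polyhedral divisor on it is automatically SNC: the support $\supp\D$ is a finite set of closed points $z\in Y$, and distinct points on a smooth curve trivially cross normally. Hence Proposition~\ref{toroidal} applies and $\tX[\D]$ is toroidal. The morphism $q:\tX[\D]\to Y$ is affine, so smoothness and the local analytic type of $\tX[\D]$ can be checked on the formal neighborhood of each fiber $q^{-1}(z)$ separately; for points $z\notin\supp\D$ the slice $\Delta_z=\sigma$ contributes no singularity, so only the finitely many $z$ with $\Delta_z\neq\sigma$ matter.

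Next I would pin down the local model at a single point $z$. Following the proof of Proposition~\ref{toroidal}, choose the one-term SNC divisor $\D'_z=\Delta_z\cdot H$ on $\AF^1$ (here $n=\dim Y=1$, so $S_z$ has at most one element). Since $Y$ is smooth and $1$-dimensional, a local coordinate $t$ at $z$ identifies the formal neighborhood of $q^{-1}(z)$ in $\tX[\D]$ with the formal neighborhood of the fiber over $0$ for the canonical morphism $\tX[\D'_z]\to\AF^1$. By Example~\ref{extor} the variety $\tX[\D'_z]$ is the affine toric variety associated to the Cayley cone in $N'_\QQ=\QQ\times N_\QQ$ spanned by $(\sigma,0)$ and $(\Delta_z,1)$; this Cayley cone is exactly $\delta_z=\overline{\QQ^+\cdot(\{1\}\times\Delta_z)}$ after relabeling the coordinate direction. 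Thus the singularity of $\tX[\D]$ along $q^{-1}(z)$ is analytically isomorphic to the toric singularity $X_{\delta_z}$, which already gives the last sentence of the theorem.

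Having this analytic identification, the smoothness criterion follows from the standard toric fact that an affine toric variety $X_{\delta_z}$ is smooth if and only if the cone $\delta_z$ is regular, i.e.\ generated by part of a lattice basis (see \cite{Dai02}). So $\tX[\D]$ is smooth near $q^{-1}(z)$ exactly when $\delta_z$ is regular, and $\tX[\D]$ is smooth globally iff this holds for every $z$ (the condition being vacuous when $\Delta_z=\sigma$, where $\delta_z$ is the regular cone $\QQ_{\geq0}\times\sigma$ only if $\sigma$ itself is regular — so I would state the criterion for all $z\in Y$ uniformly as written). This establishes both directions.

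The main obstacle I anticipate is bookkeeping the identification of the Cayley cone $\sigma'$ of Example~\ref{extor} with the claimed cone $\delta_z$, and making sure the formal-neighborhood isomorphism of Proposition~\ref{toroidal} genuinely transports the \emph{analytic} singularity type (not just smoothness) from $X_{\delta_z}$ to $\tX[\D]$ at $q^{-1}(z)$. The subtlety is that $\delta_z$ lives in $\QQ\times N$ whereas the fiber is cut out inside the full toric variety, so I would need to check that the closed subset $q^{-1}(z)$ corresponds to the torus-invariant locus over $0\in\AF^1$ and that the formal completion respects the grading; once the coordinate matching is set up carefully, the rest is routine toric geometry.
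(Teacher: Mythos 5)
Your proposal is correct and follows essentially the same route as the paper: the paper's proof simply notes that on an affine curve every polyhedral divisor is SNC (and $\tX[\D]=X[\D]$), so the statement is a special case of Proposition~\ref{toroidal}, whose proof is exactly the reduction to the one-point local model of Example~\ref{extor} with Cayley cone $\delta_z$ that you spell out, combined with the standard fact that a toric variety $X_{\delta_z}$ is smooth iff $\delta_z$ is regular. Your extra care about the generic points $z\notin\supp\D$ (where $\delta_z=\QQ_{\geq 0}\times\sigma$ forces $\sigma$ itself to be regular) is a detail the paper leaves implicit but is handled correctly.
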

\begin{proof}
  This is a special case of Proposition~\ref{toroidal}. In our setting
  every polyhedral divisor is SNC and moreover we have $\tX[\D]=X[\D]$
  since $Y$ is affine.
\end{proof}

For a proper polyhedral divisor $\D$ on a complete curve $Y$ we consider it faces. By definition these are the polyhedral divisors 
obtained defined as follows 
\[
\D^u = \sum_z \face(\D_z,u),\quad u \in \sigma^\vee
\]
Such a face is called facet, if $\min_{z\in Y} \codim \D^u_z = 1$.

\begin{theorem}
\label{sec:thm-isolated-sing}
$X[\D]$ is an isolated singularity iff for every facet
\begin{enumerate}
\item 
for every facet with
$\deg \D^u \subsetneq \face(\sigma, u)$ the variety $X[\D^u]$ is non-singular,

\item and for every other facet $\tX[\D^u]$ is non-singular.
\end{enumerate}

\end{theorem}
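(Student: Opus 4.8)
The plan is to reduce the statement to a local analysis along the invariant prime divisors of $X[\D]$ and to identify the transversal model of each such divisor with the face variety attached to the corresponding facet. Throughout we stay in the situation of Section~\ref{sec:canonical-div}: $Y$ is complete and $\sigma$ is full-dimensional, so $X=X[\D]$ carries a good $\CC^*$-action with a single fixed point $x_0$ lying in the closure of every orbit, and its singular locus is a closed $T$-invariant subset. Hence $X$ has an isolated singularity precisely when $X\setminus\{x_0\}$ is smooth; for $\dim X=2$ this is automatic (normal surfaces are regular in codimension one), so the content lies in $\dim X\ge 3$.

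First I would stratify $X\setminus\{x_0\}$ by $T$-orbits. Its singular locus is a union of orbit closures, and by Theorem~\ref{th:divisor} the codimension-one invariant orbit closures (the $E_\rho$ and $D_{Z,v}$) correspond to the facets $\D^u$ of $\D$, the condition $\min_z\codim\D^u_z=1$ singling out exactly those $u$ for which the associated stratum drops dimension by one. Since $x_0$ lies in the closure of every orbit, it suffices to show that $X$ is smooth along each facet stratum away from $x_0$, and as the smooth locus is open and $T$-stable this can be checked after localizing $\D$ at the face $\D^u$. By the orbit description of \cite{MR2207875}, passing from $\D$ to $\D^u$ is precisely this localization, so the transversal structure of $X$ along a facet stratum is governed by the face variety, whose closed orbit corresponds to the stratum itself rather than to $x_0$.

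Next I would split the facets according to the position of $\deg\D^u=\face(\deg\D,u)$. Since $\D$ is proper we have $\deg\D\subseteq\sigma$, hence $\min_{\deg\D}\langle u,\cdot\rangle\ge 0$. When this minimum is positive the facet stratum is contracted by $r\colon\tX\to X$ to the fixed point $x_0$ and imposes no condition, which is why only the two cases $\deg\D^u\subseteq\face(\sigma,u)$ appear. If $\deg\D^u\subsetneq\face(\sigma,u)$ the face $\D^u$ is again a proper polyhedral divisor with its own attractive fixed point, the local model is the affine variety $X[\D^u]$, and $X$ is smooth along the stratum iff $X[\D^u]$ is non-singular, giving~(i). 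If instead $\deg\D^u=\face(\sigma,u)$ the face is of fibre type, the affine contraction degenerates, and the correct model is the variety $\tX[\D^u]$ that is affine over $Y$; here smoothness is read off from the toric cones $\delta_z$ exactly as in Theorem~\ref{sec:thm-toric-sing}, using that $\tX$ is toroidal (Proposition~\ref{toroidal}). The point is that a non-regular fibre singularity of $\tX$ survives $r$ and thus produces a singular point of $X$ off $x_0$, giving~(ii). Assembling these equivalences over all facets yields the two stated conditions.

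The main obstacle is the identification in the second and third paragraphs: proving that the localization of $X[\D]$ at the orbit attached to a facet is equivariantly (or at least formally) isomorphic to $X[\D^u]$ in case~(i) and to $\tX[\D^u]$ in case~(ii), with the closed orbit of the model matching the facet stratum, so that global non-singularity of the lower-dimensional model becomes equivalent to smoothness of $X$ transversally along the stratum. This is also where the dichotomy $\deg\D^u\subsetneq\face(\sigma,u)$ versus $\deg\D^u=\face(\sigma,u)$ must be matched correctly with the choice between $X[\D^u]$ and $\tX[\D^u]$. A secondary point requiring care is verifying that smoothness along the codimension-one facet strata forces smoothness on all of $X\setminus\{x_0\}$, i.e.\ that the facet models also control the deeper strata lying in their closures.
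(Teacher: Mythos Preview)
Your overall architecture matches the paper's: cover $X\setminus\{x_0\}$ by the face varieties $X[\D^u]$ resp.\ $\tX[\D^u]$ attached to the facets, and read off smoothness from the pieces. The paper's proof is in fact just two sentences: the inclusion $\D^u\subset\D$ induces an \emph{open} inclusion $X[\D^u]\hookrightarrow X[\D]$ in case~(i) and $\tX[\D^u]\hookrightarrow X[\D]$ in case~(ii), and by the orbit decomposition in \cite{AHS08} these open subsets exhaust $X\setminus\{x_0\}$.

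Where your write-up goes astray is in framing the face varieties as ``transversal models'' or a ``lower-dimensional model'' along a stratum. They are not slices: $X[\D^u]$ and $\tX[\D^u]$ have the same dimension as $X[\D]$ and sit inside it as genuine Zariski-open subsets. Once you state it this way, your ``main obstacle'' evaporates---there is no formal or \'etale local identification to establish, only the standard face-to-open-subset correspondence from \cite{AHS08} (the non-affine theory of divisorial fans). Likewise your ``secondary point'' about deeper strata disappears: you are not arguing inductively from codimension one inward, you are simply covering every non-fixed orbit by one of these opens, so smoothness of all the pieces is literally smoothness of $X\setminus\{x_0\}$.

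Your discussion of the dichotomy $\deg\D^u\subsetneq\face(\sigma,u)$ versus equality is correct and matches the paper's split between the two cases. Drop the transversal-slice language, state the open inclusions directly, and cite the orbit decomposition for the covering; the proof then collapses to the paper's.
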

\begin{proof}
  First note the the inclusion $\D^u \subset \D$ defines an open inclusion of $X(\D^u) \hookrightarrow X(\D)$ in the first case and of  $\tX(\D^u) \hookrightarrow X(\D)$ in the second case. By the orbit decomposition given in \cite{AHS08} we know that every orbit except from the unique fixed point is contained in one of these open subsets.
\end{proof}

\subsection{Rational singularities}

The following proposition gives a simple characterization of
T-varieties of complexity 1 having rational singularities.

\begin{proposition} \label{rcom1}
Let $X=X[\DD]$, where $\DD$ is an SNC polyhedral divisor on a smooth curve $Y$. Then $X$ has only rational singularities if and only if 
\begin{enumerate}[$(i)$]
 \item $Y$ is affine, or 
 \item $Y=\PP^1$ and $\deg\lfloor\DD(u)\rfloor \geq -1$ for all $u\in\sigma_M^\vee$.
\end{enumerate}
\end{proposition}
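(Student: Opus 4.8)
The plan is to invoke Theorem~\ref{Trat} and exploit that $\dim Y = 1$. Indeed, for a curve the vanishing condition $H^i(Y,\OO_Y(\DD(u)))=0$ for $i\in\{1,\ldots,\dim Y\}$ collapses to the single requirement
$$H^1(Y,\OO_Y(\DD(u)))=0 \quad\text{for all } u\in\sigma_M^\vee.$$
Since a normal (hence smooth) curve is either affine or projective, I would split into these two cases. If $Y$ is affine, the higher cohomology of every coherent sheaf vanishes by \cite[Ch.~III, Th.~3.5]{Har77}, so the condition holds automatically and $X$ has rational singularities; this accounts for alternative $(i)$.

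Next I would suppose $Y$ is a smooth projective curve of genus $g$ and first show that $Y=\PP^1$ is forced. Setting $u=0$ one has $\DD(0)=0$, so Corollary~\ref{acyc} gives $H^1(Y,\OO_Y)=0$; as $\dim_\KK H^1(Y,\OO_Y)=g$ for a smooth projective curve, this means $g=0$ and hence $Y=\PP^1$. Therefore, whenever $Y$ is projective and $X$ has rational singularities, $Y$ must be $\PP^1$.

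It then remains to analyze $Y=\PP^1$. By the conventions of Section~\ref{preliminaries} the sheaf $\OO_Y(\DD(u))$ is the invertible sheaf $\OO_{\PP^1}(\lfloor\DD(u)\rfloor)$ attached to the integral part of the $\QQ$-divisor $\DD(u)$, a line bundle of degree $\deg\lfloor\DD(u)\rfloor$. The elementary computation of line-bundle cohomology on $\PP^1$ shows that $H^1(\PP^1,\OO(d))=0$ if and only if $d\geq -1$. Applying this for every character $u$, the vanishing condition becomes precisely $\deg\lfloor\DD(u)\rfloor\geq -1$ for all $u\in\sigma_M^\vee$, which is alternative $(ii)$. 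Assembling the two cases yields the asserted equivalence.

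The argument is essentially a translation of Theorem~\ref{Trat} through curve cohomology, so I expect no serious obstacle. The only subtlety worth flagging is the rounding convention: it is the degree of the integral part $\lfloor\DD(u)\rfloor$, and not the rational degree $\deg\DD(u)$, that governs the vanishing of $H^1$ on $\PP^1$, which is exactly why the floor appears in condition $(ii)$.
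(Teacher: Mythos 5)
Your proof is correct, and in the projective case it coincides exactly with the paper's argument: evaluate at $u=0$ and use Corollary~\ref{acyc} to force genus $0$, then translate Theorem~\ref{Trat} into condition $(ii)$ via the criterion $H^1(\PP^1,\OO_{\PP^1}(d))=0$ if and only if $d\geq -1$, with the floor appearing for precisely the reason you flag. The only divergence is the affine case: the paper does not route it through Theorem~\ref{Trat} at all, but instead observes that for affine $Y$ the contraction $\tX[\DD]\rightarrow X$ is an isomorphism, so $X$ is toroidal by Proposition~\ref{toroidal} and therefore has only toric, hence rational, singularities; you instead feed Serre's vanishing theorem for affine schemes into Theorem~\ref{Trat}. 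Both one-line arguments are valid. Yours is the more uniform one, since the whole proposition then factors through a single cohomological criterion, while the paper's version yields the extra geometric information that in the affine case the singularities are actually toric.
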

\begin{proof}
If $Y$ is affine, then the morphism $\varphi:\tX[\DD]\rightarrow X$ is an isomorphism. By Lemma \ref{toroidal} $X$ is toroidal and thus $X$ has only toric singularities and toric singularities are rational.

If $Y$ is projective of genus $g$, we have $\dim H^1(Y,\OO_Y)=g$. So by Corollary \ref{acyc} if $X$ has rational singularities then $C=\PP^1$. Furthermore, for the projective line we have $H^1(\PP^1,\OO_{\PP^1}(D))\neq 0$ if and only if $\deg D\leq-2$ \cite[Ch. III, Th 5.1]{Har77}. Now the corollary follows from Theorem \ref{Trat}.
\end{proof}

In the next proposition we provide a partial criterion for the Cohen-Macaulay property in the complexity 1 case. Recall that in the complexity one case, a ray $\rho\in\sigma(1)$ is extremal if and only if $\deg\DD\cap\rho=\emptyset$.

\begin{proposition} \label{cm1}
Let $X=X[\DD]$, where $Y$ is a smooth curve and $\DD$ is an SNC
polyhedral divisor on $Y$. The $X$ is Cohen-Macaulay if one of the following conditions hold,
\begin{enumerate}[$(i)$]
 \item $Y$ is affine, or
 \item $\rank M=1$
\end{enumerate}
Moreover, if $Y$ is projective and $\DD^\times=\sigma(1)$, then $X$ is Cohen-Macaulay if and only if $X$ has rational singularities.
\end{proposition}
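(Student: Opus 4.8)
The plan is to treat the two sufficient conditions $(i)$ and $(ii)$ separately, and then to deduce the final equivalence directly from Proposition~\ref{cmgen}. In each case the argument is essentially an assembly of results already established, so the work lies mostly in checking that their hypotheses are met.

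For $(i)$ I would proceed as follows. Since $\DD$ is SNC, Proposition~\ref{toroidal} shows that $\tX[\DD]$ is toroidal, and because $Y$ is affine the partial desingularization $\varphi:\tX[\DD]\to X$ is an isomorphism, so $X$ itself is toroidal. Toroidal varieties have, formally locally, only toric singularities, and toric varieties are Cohen-Macaulay; hence $X$ is Cohen-Macaulay. This step is purely a matter of invoking the toroidal structure. For $(ii)$ the key observation is dimensional: as $Y$ is a curve and $\rank M=1$, we have $\dim X=\dim Y+\rank M=2$. By the standing convention $X$ is normal, hence satisfies Serre's condition $S_2$, and in dimension at most $2$ the condition $S_2$ coincides with Cohen-Macaulayness. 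Thus $X$ is Cohen-Macaulay with no further input.

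For the concluding ``moreover'' statement, the strategy is to verify the two hypotheses of Proposition~\ref{cmgen}, namely $\D^\times=\sigma(1)$ and $\D_Z^\times=\DD_Z^{(0)}$ for every prime divisor $Z$. The first is assumed outright. For the second I would use that on a curve every proper polyhedral divisor is completely ample, and that (as recorded in the proof of Theorem~\ref{sec:thm-log-terminal}) complete ampleness forces every vertex to be extremal, i.e.\ $\D_Z^\times=\DD_Z^{(0)}$. With both hypotheses in hand, Proposition~\ref{cmgen} yields the equivalence between the Cohen-Macaulay and rational-singularity properties.

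The only point requiring care, and thus the main (if modest) obstacle, is the identity $\D_Z^\times=\DD_Z^{(0)}$. One must confirm that for a completely ample $\DD$ on a curve the restriction $\D(u)|_Z$ is big for $u$ in the interior of each normal cone $\mathcal{N}(\Delta_Z,v)$; since $Z$ is a point this bigness is the degenerate zero-dimensional notion and is guaranteed by complete ampleness, so indeed all vertices are extremal. Once this is made explicit, the proposition follows by assembling the cited results.
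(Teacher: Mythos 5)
Your proof is correct and follows essentially the same route as the paper: case $(i)$ via the toroidal structure of $X=\tX[\DD]$ when $Y$ is affine, case $(ii)$ via Serre's $S_2$ criterion for normal surfaces, and the final equivalence as a specialization of Proposition~\ref{cmgen}. The only (welcome) difference is that you make explicit the verification that all vertices are extremal in the curve case, a point the paper leaves implicit when invoking Proposition~\ref{cmgen}.
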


\begin{proof}
If $Y$ is affine then $X=\tX[\DD]$. Thus $X$ has rational singularities and so $X$ is Cohen-Macaulay. If $\rank M=1$ then $X$ is a normal surface. By Serre S$_2$ normality criterion any normal surface is Cohen-Macaulay, see Theorem 11.5 in \cite{Eis95}. Finally, the last assertion is a specialization of Proposition \ref{cmgen}.
\end{proof}

\begin{remark}
  Corollary \ref{CM-iso} and Proposition \ref{cm1} give a full
  classification of isolated Cohen-Macaulay singularities on
  T-varieties of complexity one.
\end{remark}

\subsection{Log-terminal and canonical singularities}
In the complexity one case every proper polyhedral divisor is
completely ample, since ampleness and bigness coincide. Now,
theorem~\ref{sec:thm-log-terminal} gives rise to the following

\begin{corollary}
  Let $\D = \sum_z \Delta_z \cdot z$ be a proper polyhedral divisor on
  a curve $Y$. Assume that $X[\D]$ is $\QQ$-Gorenstein.

  Then $X[\D]$ is log-terminal if and only if either
  \begin{enumerate}
  \item $Y$ is affine, or
  \item $Y=\PP^1$ and $\sum_z \frac{\mu_z-1}{\mu_z} < 2$.
  \end{enumerate}
\end{corollary}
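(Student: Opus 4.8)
The plan is to treat the two possibilities for the curve $Y$ separately, reducing the projective case to part~(\ref{item:cample}) of Theorem~\ref{sec:thm-log-terminal}. Recall from the start of this section that in complexity one every proper polyhedral divisor is SNC and completely ample, so all the hypotheses needed below hold automatically. Since a smooth curve is either affine or projective, this gives the governing dichotomy.

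First I would dispose of the affine case. When $Y$ is affine the morphism $r\colon\tX[\D]\to X[\D]$ is an isomorphism, as already used in the proof of Proposition~\ref{rcom1}, so $X[\D]=\tX[\D]$ is toroidal by Proposition~\ref{toroidal}. Hence $X[\D]$ has only toric singularities, and these are log-terminal; thus $X[\D]$ is log-terminal whenever $Y$ is affine, independently of $\D$. This settles case~(i) in both directions of the equivalence.

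For projective $Y$ I would apply part~(\ref{item:cample}) of Theorem~\ref{sec:thm-log-terminal}: as $\D$ is completely ample and $X[\D]$ is $\QQ$-Gorenstein by hypothesis, $X[\D]$ is log-terminal if and only if $(Y,B)$ is log-Fano, where $B=\sum_z\frac{\mu_z-1}{\mu_z}\cdot z$. Since complete ampleness forces every vertex to be extremal, one has $\D_z^\times\neq\emptyset$ for all $z$, so $Y^\circ=Y$ and $B$ is genuinely a divisor on all of $Y$. Moreover each coefficient equals $1-\frac1{\mu_z}\in[0,1)$, so on the smooth curve $Y$ the pair $(Y,B)$ is automatically a log-terminal pair; hence being log-Fano reduces to the single condition that $-(K_Y+B)$ be ample, i.e.\ $\deg(K_Y+B)<0$.

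It then remains to read off this inequality. Writing $g$ for the genus of $Y$, we have $\deg K_Y=2g-2$ and $\deg B=\sum_z\bigl(1-\tfrac1{\mu_z}\bigr)=\sum_z\frac{\mu_z-1}{\mu_z}\ge 0$, so $\deg(K_Y+B)<0$ becomes $2g-2+\sum_z\frac{\mu_z-1}{\mu_z}<0$. Non-negativity of the sum forces $2g-2<0$, hence $g=0$ and $Y=\PP^1$, and the inequality then reads exactly $\sum_z\frac{\mu_z-1}{\mu_z}<2$, which is case~(ii). The genus computation is routine; the only point deserving care, and the mildest obstacle, is checking that in complexity one the approximating boundary $B'$ of Theorem~\ref{sec:thm-log-terminal} coincides with $B$ and that $Y^\circ=Y$, both of which follow from complete ampleness. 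With that in hand the corollary is a direct degree computation for log-Fano curves.
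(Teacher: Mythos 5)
Your proof is correct and takes essentially the same route as the paper's: the paper likewise reduces the statement to Theorem~\ref{sec:thm-log-terminal}(\ref{item:cample}) (complete ampleness being automatic in complexity one) and then decides the log-Fano condition by the ampleness, i.e.\ negativity of the degree, of $K_Y+\sum_z\frac{\mu_z-1}{\mu_z}\cdot z$. The only difference is cosmetic: you dispose of the affine case separately via $X[\D]=\tX[\D]$ being toroidal (Proposition~\ref{toroidal}), whereas the paper's one-line proof silently absorbs it into the ampleness statement; your handling is if anything the more careful one, since Theorem~\ref{sec:thm-log-terminal} is stated under the projectivity assumption in force throughout Section~\ref{sec:canonical-div}.
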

\begin{proof}
  By theorem~\ref{sec:thm-log-terminal} we know that
  $-K_Y-\sum_z\frac{\mu_z-1}{\mu_z} \cdot z$ has to be ample. This is
  the case exactly under the conditions on the corollary.
\end{proof}

\begin{remark}\ 
  \begin{enumerate}
  \item The second condition in the corollary can be made more
    explicit: there are at most three coefficients $\D_{z_1}$,
    $\D_{z_2}$, $\D_{z_3}$ on $\PP^1$ having non-integral vertices,
    and the triple $(\mu_{z_1}, \mu_{z_2}, \mu_{z_3})$ is one of the
    Platonic triples $(1,p,q)$, $(2,2,p)$, $(2,3,3)$, $(2,3,4)$, and
    $(2,3,5)$.

  \item It is well known that log-terminal singularities are
    rational. Indeed, since $\tfrac{a}{b}-\left\lfloor
      \tfrac{a}{b}\right\rfloor\leq \tfrac{b-1}{b}$, the condition
    $\sum_z \frac{\mu_z-1}{\mu_z}<2$ ensures that
    $\deg\lfloor\DD(u)\rfloor > \deg\DD(u)-2\geq-2$. Thus $X[\DD]$ has
    rational singularities by Corollary \ref{rcom1}.
  \end{enumerate}
\end{remark}

As a direct consequence we get the following corollary characterizing
quasihomogeneous surfaces having log-terminal singularities. Recall
the definition of $\DD_1$ in \eqref{D1-desc}.
\begin{corollary}\label{sec:cor-log-term-2}
  Every quasihomogeneous log-terminal surface singularity is
  isomorphic to the section ring of the following divisor on
  $Y=\PP^1$.
  $$\DD_1=\frac{e_1}{m_1}\cdot[0]+\frac{e_2}{m_2}\cdot[1]+\frac{e_3}{m_3}\cdot[\infty],
  \quad \mbox{and}\quad \deg\DD_1>0\,.$$ Here $(m_1,m_2,m_3)$ is one
  of the Platonic triples $(1,p,q)$, $(2,2,p)$, $(2,3,3)$, $(2,3,4)$,
  and $(2,3,5)$, where $p\geq q\geq 1$, and $r\geq 2$.
\end{corollary}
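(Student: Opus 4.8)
The plan is to reduce everything to the preceding corollary together with the bookkeeping of Section~\ref{sec:complexity-one}. A quasihomogeneous surface singularity is a normal affine surface carrying a good $\KK^*$-action, i.e.\ with a single fixed point in the closure of every orbit; by the discussion opening Section~\ref{sec:canonical-div} such an $X$ is of the form $X=X[\DD]$ with $Y$ projective and $\sigma\subseteq N_\QQ$ of maximal dimension. Since $X$ is a surface acted on by a one-dimensional torus, the action has complexity one and $\dim Y=\dim X-\rank M=1$; hence $N=\ZZ$ and, $\sigma$ being full-dimensional, $\sigma=\QQ_{\geq 0}$. As recalled in Section~\ref{sec:complexity-one}, here $\DD(u)=u\,\DD_1$ for $u\in\sigma^\vee$, so $\DD$ is completely determined by $\DD_1=\DD(1)$, and two such divisors give equivariantly isomorphic surfaces exactly when their difference is principal. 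Writing $\DD_1$ as in \eqref{D1-desc}, properness of $\DD$ forces $\DD_1$ to be big, that is $\deg\DD_1>0$ on the curve $Y$.

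Next I would invoke log-terminality. Being log-terminal, $X$ is $\QQ$-Gorenstein, so the preceding corollary applies. As $Y$ is projective and not affine, we land in its second alternative: $Y=\PP^1$ and $\sum_z\frac{\mu_z-1}{\mu_z}<2$. For $N=\ZZ$ and $\sigma=\QQ_{\geq 0}$ the slice $\DD_{z_i}=\frac{e_i}{m_i}+\sigma$ has the single vertex $e_i/m_i$, and since $\gcd(e_i,m_i)=1$ this gives $\mu_{z_i}=m_i$. Hence the log-terminality inequality reads $\sum_i\frac{m_i-1}{m_i}<2$.

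It remains to extract the normal form. Each index with $m_i\geq 2$ contributes at least $\tfrac12$ to the sum, so at most three of the $m_i$ can exceed $1$; relabelling, call these $m_1,m_2,m_3$ and pad with $m_i=1$ if fewer than three occur. When all three exceed $1$ the inequality becomes $\tfrac{1}{m_1}+\tfrac{1}{m_2}+\tfrac{1}{m_3}>1$, whose solutions with $m_1\leq m_2\leq m_3$ are precisely the Platonic triples $(2,2,p)$, $(2,3,3)$, $(2,3,4)$, $(2,3,5)$, the remaining cases being absorbed into the family $(1,p,q)$. Finally I would normalize the support: using the $3$-transitivity of $\operatorname{Aut}(\PP^1)$ I move the (at most three) points carrying a non-integral vertex to $0,1,\infty$, and since every degree-zero divisor on $\PP^1$ is principal I push all integral contributions into these three coefficients. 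By the isomorphism criterion of Section~\ref{sec:complexity-one} this changes neither $X[\DD]$ up to equivariant isomorphism nor $\deg\DD_1$, which is invariant under adding principal divisors and therefore stays positive. This yields the asserted description.

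The step I expect to cost the most care is this last normalization: one must verify that consolidating the integral parts of $\DD_1$ and applying an automorphism of $\PP^1$ are both realized by adding a principal (equivalently, degree-zero) divisor in the sense of the complexity-one isomorphism criterion, while simultaneously keeping $\deg\DD_1>0$ and leaving the Platonic coefficients $e_i/m_i$ intact. Everything else is either a direct citation of the preceding corollary or the elementary Diophantine enumeration of $\tfrac1{m_1}+\tfrac1{m_2}+\tfrac1{m_3}>1$.
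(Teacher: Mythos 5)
Your proposal is correct and follows essentially the same route as the paper: the paper states this corollary as a direct consequence of the preceding complexity-one log-terminality criterion ($Y=\PP^1$ and $\sum_z\frac{\mu_z-1}{\mu_z}<2$) together with the remark enumerating the Platonic triples, exactly the two ingredients you combine. Your write-up merely makes explicit the bookkeeping the paper leaves implicit — the identification $\mu_{z_i}=m_i$, the Diophantine enumeration, and the normalization of the support to $\{0,1,\infty\}$ via principal degree-zero divisors and $\operatorname{Aut}(\PP^1)$ — all of which are carried out correctly.
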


We now characterize quasihomogeneous surfaces having canonical singularities.

\begin{theorem}
  Every quasihomogeneous canonical surface singularity is isomorphic
  to the section ring of one of the following $\QQ$-divisors on
  $\PP^1$:
  $$
  \begin{tabular}[htbp]{lll}
    $A_i:\ $ & $\dfrac{i+1}{i}\cdot[\infty]\,,$ & $i \geq 1\,.$\vspace{2mm}\\
    $D_i: $ & $\dfrac{1}{2}\cdot[0]+\dfrac{1}{2}\cdot[1]
      -\dfrac{1}{(i-2)}\cdot[\infty]\,,$ & $i \geq 4\,.$\vspace{2mm}\\
    $E_i: $ & $\dfrac{1}{2}\cdot[0]+ \dfrac{1}{3}\cdot[1]
      -\dfrac{1}{(i-3)}\cdot[\infty]\,,$  &$i=6,7,8\,.$\\
  \end{tabular}
  $$
\end{theorem}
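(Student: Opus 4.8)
The plan is to obtain the canonical singularities as the Gorenstein members of the log-terminal families already determined in Corollary~\ref{sec:cor-log-term-2}. The starting observation, valid because we are in dimension two, is a two-sided comparison of discrepancies: a quasihomogeneous canonical surface singularity is in particular log-terminal, and it is Gorenstein (a two-dimensional canonical singularity is a rational double point); conversely, if $X$ is Gorenstein then the discrepancies of any resolution are integers, so ``all discrepancies $>-1$'' (log-terminal) and ``all discrepancies $\geq 0$'' (canonical) coincide. Thus the quasihomogeneous canonical surface singularities are exactly the Gorenstein quasihomogeneous log-terminal ones, and by Corollary~\ref{sec:cor-log-term-2} each is the section ring of a divisor
$$\DD_1=\frac{e_1}{m_1}\cdot[0]+\frac{e_2}{m_2}\cdot[1]+\frac{e_3}{m_3}\cdot[\infty]$$
on $\PP^1$ with $\deg\DD_1>0$ and $(m_1,m_2,m_3)$ a Platonic triple. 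It therefore remains to single out those $\DD_1$ for which $X[\DD]$ is Gorenstein and to identify the resulting germs.

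Next I would impose the Gorenstein (index one) condition through Proposition~\ref{sec:prop-calc-K}. In the present situation ($N=\ZZ$, $\sigma=\QQ_{\geq 0}$, $Y=\PP^1$) one has $\DD(u)=u\,\DD_1$, each slice carries a single extremal vertex $v_z$ with $\mu(v_z)=m_z$, and $\rho=\QQ_{\geq 0}$ is the unique non-extremal ray, so the linear system of Proposition~\ref{sec:prop-calc-K} collapses to finding $u\in\QQ$ and an integral principal divisor $H$ on $\PP^1$ with $K_X=\pi^*H+\divisor(\chi^u)$. Solving it expresses $u$ through the $m_i$ and $\deg\DD_1$, and index one becomes the arithmetic requirement that $u\in\ZZ$ together with the congruences $u\,e_i\equiv-1\pmod{m_i}$ coming from integrality of $H$. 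By the complexity-one log-terminal criterion these data are automatically log-terminal, hence, being Gorenstein, automatically canonical; so the canonical germs are precisely the equivalence classes of Platonic $\DD_1$ solving this congruence system.

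Finally I would run the enumeration and name the abstract types. Over each of the five families $(1,p,q)$, $(2,2,p)$, $(2,3,3)$, $(2,3,4)$, $(2,3,5)$ the congruence system is solved explicitly, the representative being normalized using the equivalence $\DD_1\sim\DD_1'$ (difference an integral principal divisor) to redistribute integer parts among the three points. To identify the resulting singularity I would read off its minimal resolution, which for a $\KK^*$-surface is star-shaped: the arms are the continued-fraction resolutions of the toric cones $\delta_{z_i}$ of Theorem~\ref{sec:thm-toric-sing}, and the central curve comes from resolving the contraction of $E_\rho$ in Proposition~\ref{toroidal}. The canonical condition (all $(-2)$-curves, of Dynkin shape) forces each arm to be an $A$-type chain and fixes $\deg\DD_1$, and matching arm lengths against the Dynkin diagrams sorts the solutions into $A_i$, $D_i$, and $E_6,E_7,E_8$; a direct check, comparing the graded dimensions $\dim H^0(\PP^1,\lfloor k\DD_1\rfloor)$ with the Hilbert series of the standard weighted-homogeneous equations, shows the tabulated divisors realize exactly these types.

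The step I expect to be the genuine obstacle is the last one: translating ``canonical'' into the precise arm-and-center bookkeeping of the star-shaped resolution and matching it to the Dynkin graphs. One must track discrepancies, via \eqref{eq:discr}, along the \emph{full} toroidal resolution, not merely the contraction $\tX[\DD]\to X[\DD]$ whose only exceptional divisor is $E_\rho$, but also the chains produced when resolving the cyclic quotient singularities of $\tX[\DD]$, and only then pin down the normal form. Extra care is needed because a single type is realized by several non-equivalent $\DD_1$ (distinct good $\KK^*$-actions on the same abstract germ), so the content is not uniqueness but the existence, for each type, of the particular normalized representative recorded in the table.
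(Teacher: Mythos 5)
Your approach is sound and would prove the classification, but it takes a genuinely different route from the paper at the decisive step. After the common reduction to Platonic triples via Corollary~\ref{sec:cor-log-term-2}, the paper never invokes the classical fact that two-dimensional canonical singularities are Gorenstein (du Val) points: it imposes the canonical condition through the discrepancy formula \eqref{eq:discr} at the single exceptional divisor $E_\rho$ of $\tX[\DD]\to X[\DD]$, i.e.\ $\text{discr}_\rho=-1-u\geq 0$, so $u\leq -1$ (the printed $u<-1$ is a slip), combined with the normalizations $m_1\leq m_2\leq m_3$, $0\leq e_i<m_i$ and a separate toric reduction for $m_1=1$. You instead characterize canonical as log-terminal \emph{plus} Gorenstein and impose index one through Proposition~\ref{sec:prop-calc-K}, i.e.\ exactly the condition $u_0\in\ZZ$ plus congruences $u_0e_i\equiv -1\pmod{m_i}$ of Lemma~\ref{gor}. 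Your test buys two things: it is an equivalence, so every divisor surviving the enumeration is automatically canonical (the sufficiency direction the paper leaves implicit in the ADE labels), and it is sharper as a necessary condition --- for instance $\DD_1=\frac{4}{5}\cdot[\infty]$ satisfies $u_0=-\frac{3}{2}\leq -1$ yet is a non-Gorenstein, hence non-canonical, cyclic quotient; the paper excludes such cases only via its normalization $e_3\geq m_3$ in the toric case. The costs are the imported external fact ``canonical $\Rightarrow$ Gorenstein'' for surfaces, which the paper's discrepancy bound avoids, and the star-shaped-resolution/Dynkin bookkeeping, which is heavier than needed: the Hilbert-series comparison you mention in passing already suffices for the identification. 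Both routes end up with $u=u_0=-1$ and the same arithmetic.

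One warning about your concluding ``direct check'' of the tabulated divisors: it will fail as literally stated, through no fault of your method. Both your Gorenstein congruences and the paper's own case analysis force $e_3=1-m_3$ in the $D$- and $E$-families, i.e.\ coefficient $\left(\frac{1}{i-2}-1\right)$ resp.\ $\left(\frac{1}{i-3}-1\right)$ at $[\infty]$, which agrees with the printed $-\frac{1}{i-2}$ resp.\ $-\frac{1}{i-3}$ only for $D_4$. Indeed, for the printed $D_5$ divisor $\frac{1}{2}\cdot[0]+\frac{1}{2}\cdot[1]-\frac{1}{3}\cdot[\infty]$ one computes $u_0=-\frac{1}{2}\notin\ZZ$, so it is not Gorenstein and its section ring is not the $D_5$ singularity, whereas $\frac{1}{2}\cdot[0]+\frac{1}{2}\cdot[1]-\frac{2}{3}\cdot[\infty]$ has the correct Hilbert series. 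So a faithful execution of your plan would detect and repair this inconsistency between the paper's table and its own proof --- evidence that the finer Gorenstein test is worth the extra input it requires.
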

\begin{proof}
  Canonical implies log-terminal. Hence, it suffices to consider a
  polyhedral divisors $\DD$ on $\PP^1$ as in Corollary
  \ref{sec:cor-log-term-2} i.e., of the form
  $$\DD_1=\frac{e_1}{m_1}\cdot[0]+\frac{e_2}{m_2}\cdot[1]+\frac{e_3}{m_3}\cdot[\infty],
  \quad \mbox{and}\quad \deg\DD_1>0\,.$$
  
  Let $1 \leq m_1 \leq m_2 \leq m_3$. Up to linear equivalence we may
  assume that $m_1>e_1 \geq 0$ and $m_2> e_2 \geq 0$. If $m_1=1$ we
  have $e_1=0$ and $X$ is isomorphic to the affine toric variety given
  by the cone $\pos((e_2,m_2), (e_3,-m_3))$. But every cone is
  isomorphic to a subcone of $\pos((0,1), (1,1))$. Therefor we may
  assume that $m_1=m_2=1$, $e_1=e_2=0$ and $e_3 \geq m_3$.

  The system of equations from proposition~\ref{sec:prop-calc-K} takes
  the form
  \[
  \begin{array}{cccc|c}
    1 & 1 & 1 & 0   & 0\\
    m_1 & 0 & 0 & e_1 & m_1-3\\
    0 & m_2 & 0 & e_2 &  m_2-1\\
    0 & 0 & m_3 & e_3 &  m_3-1
  \end{array}
  \]

  Any solution $(a_1,a_2,a_3,u)$ must also fulfill
  \begin{equation}
    \label{eq:deg-boundary}
    u \cdot \deg D = \sum_i \frac{m_i-1}{m_i}-2.
  \end{equation}

  The formula for the discrepancy at $E_\rho$ yields $\text{disc}_\rho
  = -1-u$. Hence, we need $u < -1$. For the case $(1,1,q)$
  equation~(\ref{eq:deg-boundary}) yields $u =
  -\frac{m_3+1}{e_3}$. Hence we must have $e_3=m_3+1$. For the cases
  $(2,2,r)$ equation~(\ref{eq:deg-boundary}) takes the form $u \cdot
  \frac{m_3+e_3}{m_3} = \frac{1}{m_3}$ and we get $e_3=1-m_3$. For the
  remaining cases $(2,3,r)$ we get
  \[ \frac{3+2e_2+2e_3}{6} = \frac{1}{6}, \qquad
  \frac{6+4e_2+3e_3}{12} = \frac{1}{12}, \qquad
  \frac{15+10e_2+6e_3}{30} = \frac{1}{30}.\] Since $1,2$ are the only
  options for $e_2$ we infer that $e_2=1$ and $e_3=1-m_3$.
\end{proof}

\subsection{Elliptic singularities}

Let $(X,x)$ be a normal  singularity, and let $\psi:W\rightarrow X$ be a resolution of the singularity $(X,x)$. One says that $(X,x)$ is an \emph{elliptic singularity}\footnote{Some authors call such $(X,x)$ a strongly elliptic singularity.} if 
$$R^i\psi\OO_W=0\, \forall i\in\{1,\ldots, \dim X-1\},\quad \mbox{and}\quad R^{\dim X-1}\psi_*\OO_W\simeq \KK\,.$$
An elliptic singularity is \emph{minimal} if it is Gorenstein. e.g., \cite{Lau77}, and \cite{Dai02}.

In the complexity one case $R^i\psi_*\OO_W=0$, for all $i\geq 2$. Thus, the only way to have elliptic singularities is to have $M=\ZZ$. That is, the case of $\KK^*$-surfaces. In the following we restrict to this case.

We give now a simple criterion as to when $X[\DD]$ is $\QQ$-Gorenstein. This is a specialization of \ref{sec:prop-calc-K} Recall that the boundary divisor is defined in this particular case as $B=\sum_i\tfrac{m_i-1}{m_i}\cdot z_i$, see \eqref{D1-desc}. We let $u_0=\nicefrac{\deg(K_Y+B)}{\deg(\DD_1)}$.

\begin{lemma} \label{gor}
The surface $X[\DD]$ is $\QQ$-Gorenstein of index $\ell$ if and only if $u_0\in\tfrac{1}{\ell}\ZZ$ and the divisor $\ell(u_0\DD_1-K_Y-B)$ is principal. Furthermore, if $X[\DD]$ is $\QQ$-Gorenstein of index 1 then $X[\DD]$ is Gorenstein.
\end{lemma}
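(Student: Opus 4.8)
The plan is to specialize the general $\QQ$-Gorenstein criterion of Proposition~\ref{sec:prop-calc-K} to the present situation, in which $N=M=\ZZ$, $\sigma=\QQ_{\geq 0}$, and $\DD(u)=u\DD_1$ with $\DD_1=\sum_i\frac{e_i}{m_i}z_i$. First I would record the combinatorial data: the coefficient $\Delta_{z_i}=\frac{e_i}{m_i}+\sigma$ is a ray with the single vertex $v_i=e_i/m_i$ and $\mu(v_i)=m_i$, while the unique ray $\rho=\QQ_{\geq 0}$ of the tail cone is \emph{non}-extremal, since properness gives $\deg\DD_1>0$ and hence $\deg\DD\cap\rho\neq\emptyset$. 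Consequently $E_\rho$ is contracted by $\tX[\DD]\rightarrow X[\DD]$, and by \eqref{eq:KX} the canonical divisor on $X$ carries no $E_\rho$-contribution, reading $K_X=\pi^*K_Y+\sum_i(m_i-1)D_{z_i,v_i}$.

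Next I would set up the linear system of Proposition~\ref{sec:prop-calc-K}, which asks for $u\in M_\QQ$ and a divisor $H=\sum_Z a_Z Z$ on $Y$ with $\divisor(f\chi^u)=K_X$ for $\divisor(f)=H$. Using the principal-divisor formula \eqref{eq:principal-divisor} together with the expansion $\pi^*K_Y=\sum_{Z,v}\mu(v)\,b_Z\,D_{Z,v}$, matching the coefficient of each $D_{z_i,v_i}$ and dividing by $\mu(v_i)=m_i$ yields $a_{z_i}=b_{z_i}+\frac{m_i-1}{m_i}-u\frac{e_i}{m_i}$, while $a_z=b_z$ at the remaining points. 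In other words $H=K_Y+B-u\DD_1$, where the $(\mu(v)-1)$-terms of \eqref{eq:KX} have reproduced exactly the boundary coefficients $\frac{m_i-1}{m_i}$ of $B$.

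The decisive point is that $Y$ is a projective curve, so numerical triviality of $H$ (the first block of \eqref{the-monster}, which reads $\sum_i a_i=0$ because $\num Y\cong\ZZ$ via degree) amounts to $\deg H=0$. This forces $\deg(K_Y+B)-u\deg\DD_1=0$, hence the unique value $u=u_0=\deg(K_Y+B)/\deg\DD_1$. By Proposition~\ref{sec:prop-calc-K}, $X[\DD]$ is then $\QQ$-Gorenstein of index $\ell$ precisely when $u_0\in\frac{1}{\ell}\ZZ$ and $\ell H=-\ell(u_0\DD_1-K_Y-B)$ is principal, which (principality being unaffected by sign) is the stated criterion.

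For the last assertion, index $1$ means $K_X$ is Cartier. Since $\rank M=1$, the variety $X$ is a normal surface, hence Cohen--Macaulay by Serre's $S_2$ criterion exactly as in Proposition~\ref{cm1}; a Cohen--Macaulay variety with Cartier canonical divisor is Gorenstein. The main obstacle I anticipate is not a deep one but a matter of careful bookkeeping: arguing that the tail ray is non-extremal so that no $E_\rho$-equation enters the system, and verifying that the $(\mu(v)-1)$-corrections reassemble precisely into the boundary divisor $B$, so that $H$ collapses to $K_Y+B-u_0\DD_1$ and the criterion reduces to the clean form asserted.
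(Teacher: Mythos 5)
Your proposal is correct and follows essentially the same route as the paper's own proof: specializing the system \eqref{the-monster} of Proposition~\ref{sec:prop-calc-K} to the $\KK^*$-surface case, solving to get $H=K_Y+B-u\DD_1$ with $u=u_0$ forced by the degree-zero condition, and deducing Gorenstein-ness from Cohen--Macaulayness of normal surfaces. The only (harmless) differences are cosmetic: you keep $K_Y$ in general position and spell out the non-extremality of the tail ray, whereas the paper chooses $K_Y$ supported away from $\supp\DD_1$ and suppresses the ray row without comment.
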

\begin{proof}
Let a canonical divisor of the curve $Y$ be given by
$$K_Y=\sum_{i=r+1}^k b_iz_i, \qquad \mbox{where} \qquad z_i\neq z_j,\ \forall i\neq j\,.$$
With the notation of Proposition \ref{sec:prop-calc-K} we have that $D_{z_i}^\times=\{\nicefrac{e_i}{m_i}\}$ for $i\leq r$ and $D_{z_i}^\times=\{0\}$ otherwise. Furthermore $\mu_i=m_i$ and $\mu_iv_i=e_i$ for $i\leq r$, and $\mu_i=1$ and $\mu_iv_i=0$ otherwise.
With this considerations, the system of equations in
\eqref{the-monster} becomes
\begin{align*}
m_ia_i+e_iu=m_i-1,& \qquad\forall i\leq r \\
a_i=b_i,&\qquad \forall i\geq r+1\,,
\end{align*}
and so 
$$a_i=-u\frac{e_i}{m_i}+\frac{m_i-1}{m_i},\qquad\forall i\leq r\,.$$

This yields $D=-u\DD_1+B+K_Y$ and $u=u_0$. This shows the first assertion. The second one follow at once since any normal surface is Cohen-Macaulay.
\end{proof}
\begin{remark}
The $\QQ$-Gorenstein assertion of the previous lemma is true in general for affine $\KK^*$-varieties with the same proof, Cf. \cite{Wat81}.
\end{remark}

In the following theorem we characterize quasihomogeneous (minimal) elliptic singularities of surfaces.
\begin{theorem} \label{ellip}
Let $X=X[\DD]$ be a normal affine surface with an effective elliptic 1-torus action, and let $\bar{0}\in X$ be the unique fixed point. Then $(X,\bar{0})$ is an elliptic singularity if and only if one of the following two conditions hold.
\begin{enumerate}[$(i)$]
 \item $Y=\PP^1$, $\deg\lfloor u\DD_1\rfloor\geq-2$ for all $u\in\ZZ_{>0}$, and $\deg\lfloor u\DD_1\rfloor=-2$ for one and only one $u\in\ZZ_{>0}$.
 \item $Y$ is an elliptic curve, and for every $u\in\ZZ_{>0}$, the divisor $\lfloor u\DD_1\rfloor$ is not principal and $\deg\lfloor u\DD_1\rfloor\geq 0$.
\end{enumerate}
Moreover, $(X,\bar{0})$ is a minimal elliptic singularity if and only if $(i)$ or $(ii)$ holds, $u_0$ is integral and $u_0\DD_1-K_Y-B$ is principal.
\end{theorem}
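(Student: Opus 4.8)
The plan is to reduce the whole statement to the single computation of $R^1\psi_*\OO_W$ furnished by Theorem~\ref{Tdirect}, and then to read off the two cases from the genus of $Y$. Since the action is elliptic we are in the $\KK^*$-surface situation with $M=N=\ZZ$, $Y$ projective and $\sigma=\QQ_{\geq 0}$, so that $\sigma_M^\vee=\ZZ_{\geq 0}$ and $\DD(u)=u\DD_1$. Every polyhedral divisor on a curve is SNC, so Theorem~\ref{Tdirect} applies and $R^1\psi_*\OO_W$ is the sheaf associated to $\bigoplus_{u\geq 0}H^1(Y,\OO_Y(\lfloor u\DD_1\rfloor))$. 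As $X$ is a normal surface its singularities are isolated, and the non-rational part concentrates at $\bar 0$ (any other invariant singular point is toric, hence rational, contributing nothing in degree $1$); thus $R^1\psi_*\OO_W$ is a skyscraper at $\bar 0$ whose stalk is the finite-dimensional space above. Consequently $(X,\bar 0)$ is elliptic if and only if this space is one-dimensional, and $R^1\psi_*\OO_W\simeq\KK$ is then automatic.

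First I would isolate the term $u=0$: since $\lfloor 0\cdot\DD_1\rfloor=0$ we have $\dim H^1(Y,\OO_Y)=g$, the genus of $Y$, so $\dim R^1\psi_*\OO_W\geq g$ and ellipticity forces $g\leq 1$. This explains why only $Y=\PP^1$ and $Y$ elliptic occur. For $Y=\PP^1$ the $u=0$ term vanishes, and using $\dim H^1(\PP^1,\OO(D))=\max(0,-\deg D-1)$ (see \cite[Ch.~III, Th.~5.1]{Har77}, as in the proof of Proposition~\ref{rcom1}) the total dimension equals $\sum_{u>0}\max(0,-\deg\lfloor u\DD_1\rfloor-1)$. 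This is $1$ exactly when no summand is $\geq 2$ and precisely one summand equals $1$, i.e. when $\deg\lfloor u\DD_1\rfloor\geq -2$ for all $u>0$ with equality for a unique $u$; this is condition~$(i)$. Properness gives $\deg\DD_1>0$, so only finitely many summands are nonzero and the ``unique $u$'' is meaningful.

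For $Y$ of genus $1$ I would use Serre duality with $K_Y=0$, giving $\dim H^1(Y,\OO(D))=\dim H^0(Y,\OO(-D))$, and then Riemann--Roch: this equals $-\deg D$ when $\deg D<0$, equals $0$ when $\deg D>0$, and equals $1$ or $0$ when $\deg D=0$ according as $D$ is principal or not. Here the term $u=0$ already supplies the one required dimension, so ellipticity is equivalent to every positive summand vanishing, i.e. $\deg\lfloor u\DD_1\rfloor\geq 0$ for all $u>0$ together with $\lfloor u\DD_1\rfloor$ non-principal whenever its degree is $0$. This is exactly condition~$(ii)$.

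Finally, for the minimal (Gorenstein) statement I would intersect the elliptic condition with Lemma~\ref{gor}: $X[\DD]$ is $\QQ$-Gorenstein of index $1$ iff $u_0\in\ZZ$ and $u_0\DD_1-K_Y-B$ is principal, and since a normal surface is Cohen--Macaulay this index-$1$ $\QQ$-Gorenstein property coincides with being Gorenstein. Combining this with $(i)$ or $(ii)$ yields the stated characterization of minimal elliptic singularities. The main obstacle I anticipate is the genus-$1$ bookkeeping, namely correctly separating the $\deg=0$ principal from the non-principal case and verifying that the single dimension of $R^1\psi_*\OO_W$ comes from $u=0$ and from nowhere else; the $\PP^1$ case and the reduction of minimality to Lemma~\ref{gor} are then routine.
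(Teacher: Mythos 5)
Your proposal is correct and follows essentially the same route as the paper: apply Theorem~\ref{Tdirect} to identify $R^1\psi_*\OO_W$ with $\bigoplus_{u\geq 0}H^1(Y,\OO_Y(u\DD_1))$, use the $u=0$ summand to force $g\leq 1$, split into the cases $Y=\PP^1$ and $Y$ elliptic, and settle minimality via Lemma~\ref{gor}. The only difference is that you spell out the cohomological bookkeeping (the formula $\dim H^1(\PP^1,\OO(D))=\max(0,-\deg D-1)$, and Serre duality plus Riemann--Roch on the elliptic curve, including the skyscraper-at-$\bar 0$ justification) that the paper leaves implicit.
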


\begin{proof}
Assume that $Y$ is a projective curve of genus $g$, and let $\psi:Z\rightarrow X$ be a resolution of singularities. By Theorem \ref{Tdirect},
$$R^1\psi_*\OO_Z=\bigoplus_{u\geq0}H^1(Y,\OO_Y(u\DD_1))\,.$$
Since $\dim R^1\psi_*\OO_Z\geq g=\dim H^1(Y,\OO_Y)$, if $X$ has an elliptic singularity then $g\in\{0,1\}$. 

If $Y=\PP^1$ then $(X,\bar{0})$ is an elliptic singularity if and only if $H^1(Y,\OO_Y(u\DD_1)=\KK$ for one and only one value of $u$. This is the case if and only if $(i)$ holds. If $Y$ is an elliptic curve, then $H^1(Y,\OO_Y)=\KK$. So the singularity $(X,\bar{0})$ is elliptic if and only if $H^1(Y,u\DD_1)=0$ for all $u>0$. This is the case if and only if $(ii)$ holds.

Finally, the last assertion concerning maximal elliptic singularities follows immediately form Proposition \ref{gor}.
\end{proof}

\begin{example}
By applying the criterion of Theorem \ref{ellip}, the following combinatorial data gives rational $\KK^*$-surfaces with an elliptic singularity at the only fixed point.
\begin{enumerate}[$(i)$]
 \item $Y=\PP^1$ and $\DD_1=-\tfrac{1}{4}[0]-\tfrac{1}{4}[1]+\tfrac{3}{4}[\infty]$. In this case $X=\spec A[Y,m\DD_1]$ is isomorphic to the surface in $\AF^3$ with equation
$$x_1^4x_3+x_2^3+x_3^2=0\,.$$
 \item $Y=\PP^1$ and $\DD_1=-\tfrac{2}{3}[0]-\tfrac{2}{3}[1]+\tfrac{17}{12}[\infty]$. In this case $X=\spec A[Y,m\DD_1]$ is isomorphic to the surface
$$V(x_1^4x_2x_3-x_2x_3^2+x_4^2\ ;\, x_1^5x_3-x_1x_3^2+x_2x_4\ ;\,x_2^2-x_1x_4)\subseteq \AF^4\,.$$
\end{enumerate}
This last example is not a complete intersection since otherwise
$(X,\bar{0})$ would be Gorenstein i.e., minimal elliptic which is not
the case by virtue of Theorem \ref{ellip}. In the first example the
elliptic singularities is minimal, since every normal hypersurface is
Gorenstein.
\end{example}

\section{Factorial T-varieties}\label{sec:fact-t-vari}

Let $Y$ be a normal projective variety having class group $\ZZ$. Hence, we have a canonical degree map $\cl(Y) \rightarrow \ZZ$ by sending the ample generator to $1$. We choose a set $\mathcal{Z}=\{(Z_1,\mu_i), \ldots (Z_s,\mu_s)\}$ of prime divisors of degree $1$ and corresponding tuples $\mu_i=(\mu_{i1},\ldots \mu_{ir_i}) \in \NN^{r_i}$. We assume that the integers $\gcd(\mu_i)$ are pairwise coprime and define $|\mathcal{Z}|:=\sum_i (r_i-1)$.

We give construction of a polyhedral divisor on $Y$ with 
polyhedral coefficients in $N_\QQ=\QQ^{|\mathcal{Z}|+1}$ by induction on 
$|\mathcal{Z}|$.

\begin{construction}
If $|\mathcal{Z}|=0$ the $\mu_1,\ldots, \mu_s$ are positive pairwise coprime integers. Hence, there is are integer coefficients $e_1, \ldots, e_s$ such that $1 = \sum e_i \mu_i$. Now, we define the vertices $v_{ij}:=\frac{e_i\mu_i}{\mu_1 \cdots \mu_s} \in N_\QQ$. 

If $|\mathcal{Z}|>0$ there is a $j\in\{1,\ldots,r\}$ such that 
$r_j>1$. Now, we consider the data $\mathcal{Z}'$ obtained from $\mathcal{Z}$ by replacing $\mu_j$ by \[\mu_j':=(\mu_{j 1},\ldots,\gcd(\mu_{j r_j-1},\mu_{j r_j})).\]
By induction we obtain vertices $v_{ik}'$ with $v_{jr_j-1}'$ being the vertex corresponding to $\mu_{jr_j-1}'=\gcd(\mu_{jr_j-1},\mu_{jr_j})$.
We find coefficients $\alpha,\beta \in \ZZ$ such that 
$\mu_{jr_j-1}'=\alpha \mu_{jr_j-1}+ \beta \mu_{jr_j}$. Now, we define the vertices 
\[v_{j r_j -1}=\left(v_{j 1}',-\frac{\beta}{\mu_{jr_j-1}}\right); \qquad  v_{j r_j}=\left(v_{j 1}', \frac{\alpha}{\mu_{jr_j}} \right)
\]
and $v_{ik}=(v_{ik}',0)$ for $i \neq j$ or $k < r_j-1$.

For every set of admissible data $\mathcal{Z}$ we can define a
polyhedral divisor $\D=\D(\mathcal{Z})$ on $Y$. The tail fan is
spanned by the rays $\QQ_{\geq 0} \cdot \sum_i v_{ik_i}$, where $1\leq
k_i\leq r_i$. And the vertices of $\D_{Z_i}$ are exactly the
$v_{ik}$. We denote the corresponding algebra $A[\D]$ also by
$A[\mathcal{Z}]$.
\end{construction}

\begin{theorem}
\label{sec:thm-ufd}
  $A[\mathcal{Z}]$ is a normal factorial ring. 
\end{theorem}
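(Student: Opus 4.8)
The plan is to deduce factoriality from Proposition~\ref{sec:prop-factorial}. Since $\cl(Y)\cong\ZZ$ is given (so $\ell=1$), once we know $A[\mathcal{Z}]=A[\D(\mathcal{Z})]$ is normal it remains only to check that the matrix in~\eqref{the-monster} (equivalently, the relation matrix of the class group from Theorem~\ref{divclass}) is square and has determinant $\pm1$. Normality I would obtain from the Altmann--Hausen theorem (Theorem 3.1 in~\cite{MR2207875}): it suffices to see that $\D(\mathcal{Z})$ is proper. As $\cl(Y)\cong\ZZ$ and $Y$ is projective, a divisor on $Y$ is big, semiample, or trivial according to the sign of its degree; since every $Z_i$ has degree $1$ we have $\deg\D(u)=\sum_i\min_{k}\bangle{u,v_{ik}}$, and the defining property of the tail fan forces this to be $\geq 0$ on $\sigma^\vee_M$ and $>0$ on $\relint\sigma^\vee\cap M$, giving properness.

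To see that the matrix is square I would invoke Corollary~\ref{sec:cor-Q-fac} and verify $\rk\cl Y+\sum_Z(\#\D_Z^\times-1)+\#\D^\times=\dim N$. Here $\rk\cl Y=1$, each coefficient $\D_{Z_i}$ has exactly its $r_i$ vertices $v_{i1},\dots,v_{ir_i}$ and all of them are extremal, so $\sum_Z(\#\D_Z^\times-1)=\sum_i(r_i-1)=|\mathcal{Z}|$, while $\dim N=|\mathcal{Z}|+1$. The remaining input is that $\D$ has no extremal ray. Indeed every ray $\rho$ of the tail cone is generated by some $w=\sum_i v_{ik_i}$, so for $u\in\relint(\sigma^\vee\cap\rho^\perp)$ we get $\deg\D(u)=\sum_i\min_k\bangle{u,v_{ik}}\leq\bangle{u,w}=0$; hence $\D(u)$ is not big and $\rho\notin\D^\times$. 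Thus $\#\D^\times=0$ and the count gives $1+|\mathcal{Z}|+0=\dim N$, i.e. the matrix is square.

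The determinant $\pm1$ I would prove by induction on $|\mathcal{Z}|$, mirroring the Construction. In the base case $|\mathcal{Z}|=0$ the matrix carries the degree row $(1,\dots,1\mid 0)$ on top and, below it, the diagonal block $\operatorname{diag}(\mu(v_1),\dots,\mu(v_s))$ together with the last column $(\mu(v_i)v_i)_i$, with no ray rows present. A single cofactor expansion along the last column gives $\det=\pm\bigl(\prod_i\mu(v_i)\bigr)\sum_i v_i$; since $1=\sum_i e_i\mu_i$ yields $\sum_i v_i=1/\prod_i\mu_i$, the choice of the B\'ezout coefficients $e_i$ (using that the $\mu_i$ are pairwise coprime) makes $\prod_i\mu(v_i)=\prod_i\mu_i$, so $\det=\pm1$. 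For the inductive step, passing from $\mathcal{Z}$ to the datum $\mathcal{Z}'$ (where $\mu_j$ is replaced by $\mu_j'$ whose last entry is $\gcd(\mu_{j\,r_j-1},\mu_{j\,r_j})$) lowers $|\mathcal{Z}|$ by one. Comparing the two matrices, $M$ is obtained from $M'$ by adjoining one new coordinate column and splitting the single row of the vertex $v'_{j\,r_j-1}$ into the two rows of $v_{j\,r_j-1}$ and $v_{j\,r_j}$; in the new column these two rows carry the entries $-\beta$ and $\alpha$ coming from $v_{j\,r_j-1}=(v'_{j\,r_j-1},-\beta/\mu_{j\,r_j-1})$ and $v_{j\,r_j}=(v'_{j\,r_j-1},\alpha/\mu_{j\,r_j})$, while all other entries of that column vanish. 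Expanding along the new column and using the B\'ezout relation $\gcd(\mu_{j\,r_j-1},\mu_{j\,r_j})=\alpha\mu_{j\,r_j-1}+\beta\mu_{j\,r_j}$ to evaluate the resulting $2\times2$ minor, I expect $\det M=\pm\det M'=\pm1$.

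The main obstacle is this last bookkeeping. The two split rows agree with the old row of $v'_{j\,r_j-1}$ in all of the old columns (up to the scalars $\mu(v_{j\,r_j-1})$, $\mu(v_{j\,r_j})$ versus $\mu(v'_{j\,r_j-1})=\gcd(\mu_{j\,r_j-1},\mu_{j\,r_j})$), so one must combine row operations that clear these old columns with the cofactor expansion along the new coordinate, and check that the only surviving contribution is exactly the $2\times2$ B\'ezout minor of determinant $\pm1$ times $\det M'$. A secondary point to pin down is the base-case identity $\prod_i\mu(v_i)=\prod_i\mu_i$, i.e. that the B\'ezout coefficients can be, and are, chosen so that the denominators of the $v_i$ realize the prescribed orders; this is precisely where the pairwise coprimality of the $\gcd(\mu_i)$ enters.
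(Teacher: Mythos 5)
Your proposal is correct and follows essentially the same route as the paper: the paper's proof is precisely the induction on $|\mathcal{Z}|$ you describe, computing the determinant of the class-group relation matrix $M_{\mathcal{Z}}$ with a B\'ezout base case and an inductive step that splits the row of $v'_{j\,r_j-1}$ into two rows carrying $-\beta$ and $\alpha$ in the new column, where your ``bookkeeping'' worry is resolved exactly as you anticipate (the paper uses determinant-preserving row operations, equivalent to your cofactor expansion, to get $\det M_{\mathcal{Z}}=\det M_{\mathcal{Z}'}$). Your preliminary verifications --- properness/normality of $\D(\mathcal{Z})$, the absence of extremal rays, and squareness of the matrix via Corollary~\ref{sec:cor-Q-fac} --- are left implicit in the paper, and the base-case normalization $\prod_i\mu(v_i)=\prod_i\mu_i$ that you flag as needing care is likewise glossed over there (the paper simply writes $\mu_i$ for the vertex multiplicities), so on both points you single out your argument is at least as complete as the paper's own.
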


\begin{proof}
For $|\mathcal{Z}|=0$ the matrix of relations for the class group
has the following form
  \[
M_{\mathcal{Z}}=\begin{pmatrix}
  1 &  \ldots &  1 & 0 \\
  \hline
  \mu_{1} &  \ldots & 0 & \mu_{1}v_{11} \\
  \vdots&  \ddots & \vdots & \vdots \\
  0 &  \ldots &  \mu_{s} & \mu_{s} v_{s1} \\
\end{pmatrix}
\]
We get $\det M_{\mathcal{Z}}=\sum_{i} e_i \mu_{i}=1$ by the choice of $v_{ij}= \frac{e_i\mu_i}{\mu_1 \cdots \mu_s}$.

By the inductive construction above we obtain $M_{\mathcal{Z}}$ from $M_{\mathcal{Z}}'$ by adding first a column of zeros and 
then replacing the row $(0\cdots 0, \mu_{jr_j-1}', 0\cdots 0, \mu_{jr_j-1}'v'_{jr_j-1},0)$ by the two rows
\[
\begin{matrix}
  0&\cdots &0& \mu_{jr_j-1}& 0\cdots 0 & {\mu_{jr_j-1}}v'_{jr_j-1}&-\beta\\
  0&\cdots &0& \mu_{jr_j}& 0\cdots 0 & {\mu_{jr_j}}v'_{jr_j-1}& \alpha\\
\end{matrix}
\]
Via row operations
 these rows transform to
\[
\begin{matrix}
  0&\cdots &0&0& 0\cdots 0 & 0 &1\\
  0&\cdots &0& \mu'_{jr_j-1}& 0\cdots 0 & \mu_{jr_j-1}'v'_{jr_j-1}& 0 
\end{matrix}
\]

Hence, we have $\det M_{\mathcal{Z}} = \det  M'_{\mathcal{Z}}$. But   $\det  M'_{\mathcal{Z}}= 1$ holds by induction.
\end{proof}

For the case $Y=\PP^1$ we obtain a complete classification. Now,
$z_1,\ldots, z_s$ are points in $\PP^1$. Without loss of generality,
we may assume that the support of $\D$ consists of at least $3$
points---in the other case $X$ would be toric. But this would imply
that $X = \A^n$.
 
By applying an isomorphism of $\PP^1$ we may assume, that
$z_1=\infty$, $z_2=0$ and $z_3,\ldots,z_s \in k^*$.  
Via $K(\PP^1)\cong k(t)$ we get $\divisor(t)= [0]-[\infty] =z_2-z_1$.

\begin{corollary}[Thm. 1.9., \cite{HHS09}]
\label{sec:cor-ufd-cplx-1}
  Every normal k-algebra $A$ of dimension $n$ admitting a (positive) grading by $\NN^{n-1}$ such that $A_0=k$ is factorial iff it is isomorphic to a free algebra over some
\[
A[\mathcal{Z}]=k\big[T_{ij} \;\big|\; 0 \leq i \leq s;\; 1 \leq j \leq r_{i}\big]/\big(T_i^{\mu_i} + T_2^{\mu_2} - z_iT_1^{\mu_1} \;\big|\;3 \leq i \leq s \big).
\]
In particular every such $k$-algebra it is a complete intersection of dimension $2 + \sum_i (r_i-1)$.
\end{corollary}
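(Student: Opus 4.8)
The plan is to prove the two implications separately, the backward one being immediate from the results already established and the forward one requiring that we reverse-engineer the combinatorial data of Section~\ref{preliminaries} from the factoriality criterion. For the ``if'' direction I would argue as follows: by Theorem~\ref{sec:thm-ufd} every ring $A[\mathcal{Z}]$ is normal and factorial, and since a polynomial ring over a factorial domain is again factorial (Gauss), any free algebra over $A[\mathcal{Z}]$ is factorial as well. This settles the direction.

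For the ``only if'' direction, a positive $\NN^{n-1}$-grading with $A_0=\KK$ exhibits $\spec A$ as a normal affine $T$-variety of complexity one with a unique attractive fixed point. By the Altmann--Hausen construction I would write $A=A[\D]$ for a proper polyhedral divisor $\D$ on a curve $Y$; positivity of the grading forces $\sigma$ to be full dimensional, and $A_0=H^0(Y,\OO_Y)=\KK$ forces $Y$ to be projective, since a positive-dimensional affine $Y$ would enlarge $A_0$. Now Proposition~\ref{sec:prop-factorial} applies: factoriality is equivalent to $\cl(Y)\cong\ZZ^\ell$ together with the matrix~\eqref{the-monster} being square of determinant $\pm1$. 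As $Y$ is a smooth projective curve we have $\cl(Y)=\pic(Y)\cong\ZZ\oplus\mathrm{Jac}(Y)(\KK)$, and freeness of finite rank forces the genus to vanish. Hence $Y=\PP^1$, $\ell=1$, and every point is a prime divisor of degree one, so the relevant data is exactly a collection of points $z_i$ together with the vertices and multiplicities of the slices $\D_{z_i}$.

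It then remains to recover an admissible $\mathcal{Z}$ and the free factor. After normalizing $z_1=\infty$, $z_2=0$ and $z_3,\ldots,z_s\in\KK^*$, the ``square'' condition is the numerical identity of Corollary~\ref{sec:cor-Q-fac}, while I would interpret the determinant-$\pm1$ condition as the assertion that the integers $\gcd(\mu_i)$ are pairwise coprime, obtained by running the row reductions in the proof of Theorem~\ref{sec:thm-ufd} in reverse to recover the B\'ezout coefficients $(\alpha,\beta)$ and the vertex positions $v_{ij}$. If at most two points carry non-trivial data the divisor is toric and factoriality gives $X=\A^n$, a free algebra over $\KK$; otherwise the torus directions not constrained by these relations split off as a free polynomial factor, and the remaining data is precisely an admissible $\mathcal{Z}$. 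Computing the semi-invariant generators $T_{ij}$ attached to the horizontal divisors $(z_i,v_{ij})$ of Theorem~\ref{th:divisor}, together with the linear relations among the classes $[z_i]$ in $\cl(\PP^1)$, should then yield the three-term relations $T_i^{\mu_i}+T_2^{\mu_2}-z_iT_1^{\mu_1}$ and the dimension count $2+\sum_i(r_i-1)$. The step I expect to be the main obstacle is exactly this final reconstruction: converting ``square matrix of determinant $\pm1$'' into the pairwise coprimality of the $\gcd(\mu_i)$ while simultaneously isolating the free polynomial directions, and then verifying that the three-term relations generate the whole defining ideal, so that $A[\mathcal{Z}]$ is a genuine complete intersection of the stated dimension rather than merely a quotient in which these relations sit.
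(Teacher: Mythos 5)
Your ``if'' direction is fine (Theorem~\ref{sec:thm-ufd} plus Gauss), and your reductions to $Y=\PP^1$ and to $\sigma$ full-dimensional are correct. But the two steps you yourself flag as ``the main obstacle'' are precisely the heart of the statement, and your plan for them does not work as described. First, ``running the row reductions in the proof of Theorem~\ref{sec:thm-ufd} in reverse'' is not an argument: those reductions show that the data $\mathcal{Z}$ of Construction~6.1 produce a matrix~\eqref{the-monster} of determinant $1$; the converse you need --- that \emph{every} square unimodular configuration of points, vertices, multiplicities and rays on $\PP^1$ is equivalent (under lattice automorphisms, linear equivalence, and moving points) to such a $\mathcal{Z}$ plus a split free factor --- is an existence-of-normal-form statement that is nowhere established, neither in your sketch nor in the paper, and it is genuinely nontrivial. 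Second, even granting that reconstruction, the corollary asserts a specific presentation: that $A[\mathcal{Z}]$ \emph{is} the quotient $k[T_{ij}]/(T_i^{\mu_i}+T_2^{\mu_2}-z_iT_1^{\mu_1})$, i.e.\ that these three-term relations generate the whole ideal. You defer this verification entirely, so the proposal proves neither the presentation nor the complete-intersection claim.

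For comparison, the paper's proof never touches Proposition~\ref{sec:prop-factorial} or the determinant criterion. It exploits factoriality directly and ring-theoretically: each invariant prime divisor $D_{ij}$ (vertices) and $E_i$ (extremal rays) is principal, hence cut out by a homogeneous element $T_{ij}$ resp.\ $S_i$, unique up to constants; a divisor computation using the formula~\eqref{eq:principal-divisor} shows any homogeneous $f\cdot\chi^u$ equals, up to a constant, a monomial in the $T_{ij}$, $S_i$ and $g_c(T_2^{\mu_2}/T_1^{\mu_1})$, so these elements generate $A$ (the $S_i$ giving the free polynomial factor); the relations arise by lifting $(z_i-t)+t=z_i$ after normalizing $T_i^{\mu_i}/T_1^{\mu_1}=(z_i-t)\chi^0$; and completeness of the relations follows because any homogeneous relation $F(T,S)=0$ reduces, modulo the three-term relations, to a relation $F''(T_1^{\mu_1},T_2^{\mu_2})=0$, which forces $F''=0$. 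This route buys exactly what your plan is missing: it produces the presentation and the complete-intersection statement simultaneously, without any normal-form classification of unimodular matrices. If you want to salvage your approach, you would have to either prove the normal-form converse to Theorem~\ref{sec:thm-ufd} or, more economically, replace your final paragraph by the divisor-theoretic generation-and-relations argument above.
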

\begin{proof}
We consider $X:= \spec A$, which gives rise to a polyhedral divisor $\D$ on $\PP^1$ with support $z_1, \ldots, z_s \in \PP^1$. We consider the prime divisors $D_{i1},\ldots, D_{ir_i} \subset X$ corresponding to the vertices in $v_{i1}, \ldots v_{ir_i} \in \D_{z_i}^{(0)}$ and the prime divisors $E_1, \dots, E_r$ corresponding  to the extremal rays of $\D$. Moreover, let $\mu_{ij}$ denote the multiplicity $\mu(v_{ij})$.

Since $A$ is factorial, there are homogeneous elements $T_{ij}, S_i \in A$ such that $\divisor(T_{ij})=D_{ij}$ and $\divisor(S_{i})=E_{i}$. Moreover, these elements are unique up to multiplication by a constant.

First we show, that $A$ is generated by these elements. Let $f \cdot \chi^u$ be a homogeneous element of $A \cong A[\D]$. Then $\divisor(f \cdot \chi^u)$ is effective  and we have a decomposition
 \[\divisor(f \cdot \chi^u) = \sum_{ij} \alpha_{ij} D_{ij} + \sum_i \beta_i E_i + 
 c \cdot \pi^*z_1 +\sum_{z \notin \supp \D} c_z \cdot (D_{z,0}-\pi^*z_1),\]
with $c=\sum_z c_z$.  We find a polynomial $g_c$ of degree $C$ such that $\divisor(g)=\sum_z c_z \cdot (z-z_1)$. We get $\divisor((z_i-t) \cdot \chi^0) = \pi^*z_i - \pi^*z_1 = \sum_{j=1}^{r_i}\mu_{ij} D_{ij}-\sum_{j=1}^{r_1}\mu_{1j} D_{1j}$. Hence, we have $\divisor((z_i-t) \cdot \chi^0) = \divisor(\nicefrac{T_i^{\mu_i}}{T_1^{\mu_2}})$ and moreover
\[
  \divisor(f \cdot \chi^u) = \divisor\left({\textstyle \prod_{ij}T_{ij}^{\alpha_{ij}} \cdot \prod_i S_i^{\beta_i}  \cdot (T_1^{\mu_1})^c \cdot g_c\left(\nicefrac{T_2^{\mu_2}}{T_1^{\mu_1}}\right)}\right)\,.
\]
Since $A$ is factorial and the only invertible functions are constants
we get a equality not only for the induced divisors but also for the
functions themselves---at least up to a constant factor. Hence, $A$ is
generated by $T_{ij}$ and $S_i$.

Next, we will show that the relations between these elements are as
given above. Remember, that $T_{ij}$ was given only up to a constant
factor. By making a suitable choice of constants, we may assume that
$\nicefrac{T_i^{\mu_i}}{T_1^{\mu_1}} = (z_i-t) \chi^0$ holds for $2
\leq i \leq s$. In particular, the $\ZZ^{n-1}$ degree of $T_i^{\mu_i}$
does not depend on $i$. Now, the relation $(z_i-t) + t = z_i$ lifts to
$(z_i-t)\cdot \chi^0 + t\cdot \chi^0 = z_i\cdot \chi^0$ and we obtain
\[{T_i^{\mu_i}}/{T_1^{\mu_1}} + {T_2^{\mu_2}}/{T_1^{\mu_1}} = z_i\cdot
\chi^0 \quad \Rightarrow \quad T_i^{\mu_i} + T_2^{\mu_2} - z_i
T_1^{\mu_1}=0\,.
\]
It remains to show, that there are no other independent
relations. Therefore it is enough to consider a homogeneous relation $F(T,S)=  \sum_i c_i T^{\alpha_i} S^{\beta_i} = 0$. 
Being homogeneous implies, that
 $T^{\alpha_i}S^{\beta_i}/T^{\alpha_j}S^{\beta_j} = f \cdot
 \chi^0$. Hence, by our for principal divisors on page~\pageref{eq:principal-divisor} we get $\divisor(T^{\alpha_i}S^{\beta_i}/T^{\alpha_j}S^{\beta_j}) = \sum_{\ell} (\ord_{z_\ell} f)  \cdot \sum_{m} \mu_{\ell m} D_{\ell
   m}$. Thus, \[
 T^{\alpha_i}S^{\beta_i}/T^{\alpha_j}S^{\beta_j} =
 \prod_{\ell} \left(T_\ell^{\mu_\ell}\right)^{(\ord_{z_\ell} f)} \qquad
and
\qquad
F(T,S) = T^{\alpha'} S^{\beta'} \cdot F'(T_1^{\mu_1}, \ldots, T_r^{\mu_r}).\]
First we may divide by $T^{\alpha'} S^{\beta'}$. Now, by reducing with relations of the form $T_i^{\mu_i} + T_2^{\mu_2} - z_iT_1^{\mu_1}$ we eliminate the occurrences of $T_i$ with $i>2$ and come to homogeneous relation $F''(T_1^{\mu_1}, T_2^{\mu_2})=0$ of degree $m$. Dividing by $T_1^m$ gives
\[
F''(1\cdot \chi^0,T_2^{\mu_2}/T_1^{\mu_1})=F''(1,z)\cdot \chi^0=0.
\]
But this implies that $F'' = 0$ hold and hence that the relation $F$
is a combination of the relations $T_i^{\mu_i} + T_2^{\mu_2} -
z_iT_1^{\mu_1}$.
\end{proof}

\begin{remark}
  We can easily identify the log-terminal singularities of the form
  $A(\mathcal{Z})$---by theorem~\ref{sec:thm-log-terminal}
  $A(\mathcal{Z})$ is log-terminal if and only if $\max \mu_i > 1$ for
  at most three $1 \leq i_1 < i_2 < i_3 \leq s$ and $(\max \mu_{i_1},
  \max \mu_{i_2}, \max \mu_{i_3})$ is one of the platonic triples
  $(1,p,q)$, $(2,2,q)$, $(2,3,3)$, $(2,3,4)$, $(2,3,5)$.
\end{remark}

In the case of complexity one we are also able to characterize \emph{isolated} factorial singularities.  Every (normal) factorial surface singularity is of course isolated. For the remaining cases we provide the following theorem




\begin{theorem}
  Every factorial T-variety of complexity one and dimension at least
  three having an isolated singularity at the vertex is one of one of
  the following
\begin{enumerate}
\item A $\mathbf{cA_q}$ threefold singularities of the form
\[k[T_1,\ldots, T_4]/(T_1T_2 + T_3^{q+1} + T_4^r)\]
with $0 < q < r$ being coprime.
\item The fourfold singularity which is stably equivalent to $\mathbf{A_q}$:
\[
k[T_1,\ldots, T_5]/(T_1T_2 + T_3T_4 + T_5^{q+1})
\]
\item The fivefold singularity which is stably equivalent to $\mathbf{A_1}$:
\[k[T_1,\ldots, T_6]/(T_1T_2 + T_3T_4 + T_5T_6)
\]
\end{enumerate}
\end{theorem}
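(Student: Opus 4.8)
The plan is to merge the factoriality classification of Corollary~\ref{sec:cor-ufd-cplx-1} with the isolated-singularity criterion of Theorem~\ref{sec:thm-isolated-sing}. Since the singular point is a vertex lying in the closure of every orbit, the tail cone $\sigma$ is full-dimensional and $Y$ is complete, so the induced good $\CC^*$-action exhibits $X$ as a quasihomogeneous variety and Corollary~\ref{sec:cor-ufd-cplx-1} applies. An affine base curve can be excluded at once: then $X=\tX[\D]$ admits no attractive fixed point, and by the toroidal description (Theorem~\ref{sec:thm-toric-sing}) its non-smooth locus is a union of entire fibres $q^{-1}(z)$ of dimension $\dim X-1\geq 2$, hence never isolated. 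Thus $Y=\PP^1$ and, by Corollary~\ref{sec:cor-ufd-cplx-1}, $X$ is a free polynomial extension of some $X[\mathcal{Z}]$; the free factors must be absent, since $X[\mathcal{Z}]\times\AF^m$ has singular locus $\operatorname{Sing}(X[\mathcal{Z}])\times\AF^m$, so isolatedness forces $X\cong X[\mathcal{Z}]$ with support $z_1,\dots,z_s$, $s\geq 3$, and $\dim X=2+\sum_i(r_i-1)\geq 3$.

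Next I would run through the facets of $\D$ as in Theorem~\ref{sec:thm-isolated-sing}, which reduces isolatedness to smoothness of the face varieties $X[\D^u]$ (for facets with $\deg\D^u\subsetneq\face(\sigma,u)$) and $\tX[\D^u]$ (for the remaining facets), these open subsets covering $X$ minus the vertex. A facet of the first, ``projective'', type produces a complexity-one variety over $\PP^1$, and Proposition~\ref{sec:prop-D-smooth} tells us $X[\D^u]$ can be smooth only when its coefficient is supported on at most two of the $z_i$ and the associated cone is regular. A facet of the second, ``affine'', type produces an open $\tX[\D^u]$ whose smoothness is governed, via the toroidal structure, by regularity of the cones $\delta_z=\overline{\QQ^+\cdot(\{1\}\times\face(\Delta_z,u))}$ attached to the face, as in Theorem~\ref{sec:thm-toric-sing}.

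The heart of the argument, and the step I expect to be hardest, is to convert these two families of smoothness requirements into numerical constraints on the $(r_i,\mu_{ij})$ and to enumerate the survivors. The decisive reduction is that a projective-type facet still meeting three or more of the $z_i$ can never be smooth by Proposition~\ref{sec:prop-D-smooth}; showing that such a facet must exist whenever more than three coefficients $\Delta_{z_i}$ are nontrivial forces $s=3$ and, by Corollary~\ref{sec:cor-ufd-cplx-1}, a single defining equation, i.e.\ a hypersurface. With $s=3$ fixed, regularity of all face cones together with the two-point conditions forces $\sigma$ to be regular, collapses each coefficient to a unimodular segment or a point, and rules out $r_i\geq 3$; using the determinant-one normalization of the relation matrix from the proof of Theorem~\ref{sec:thm-ufd} to place the vertices $v_{ij}$ explicitly, only the three patterns $(r_1,r_2,r_3)=(2,1,1),(2,2,1),(2,2,2)$ remain, of dimensions $3,4,5$.

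Finally I would read off the normal forms, noting that a \emph{single}-vertex coefficient has $\delta_z$ a ray and so imposes no smoothness obstruction, its multiplicity being constrained only by the coprimality of the $\gcd(\mu_i)$ in Theorem~\ref{sec:thm-ufd}, whereas a \emph{two}-vertex coefficient is forced to multiplicities $(1,1)$. In the pattern $(2,1,1)$ the two single-vertex coefficients carry coprime multiplicities $q+1$ and $r$, giving the $cA_q$ equation $T_1T_2+T_3^{q+1}+T_4^r=0$; the patterns $(2,2,1)$ and $(2,2,2)$ yield $T_1T_2+T_3T_4+T_5^{q+1}=0$ and $T_1T_2+T_3T_4+T_5T_6=0$. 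The remaining care is to verify that no configuration with an extra vertex or larger multiplicity at a two-vertex coefficient slips through while keeping the relation matrix of Proposition~\ref{sec:prop-factorial} unimodular, which I would organize as a short case distinction on $(r_1,r_2,r_3)$.
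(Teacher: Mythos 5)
Your proposal takes a genuinely different route from the paper's own proof --- in fact it is exactly the alternative that the paper flags in the final sentence of its proof (``Alternatively one could use Theorem~\ref{sec:thm-isolated-sing} and Proposition~\ref{sec:prop-D-smooth}'') but never carries out. The paper's actual argument is much more pedestrian: by Corollary~\ref{sec:cor-ufd-cplx-1} the variety is the explicit complete intersection $V\bigl(T_i^{\mu_i}+T_2^{\mu_2}-z_iT_1^{\mu_1},\ 3\le i\le s\bigr)$, and one applies the Jacobian criterion to these equations. Two observations finish it: (i) since $\dim X\ge 3$ some $r_\ell>1$, and at the non-vertex point $T_{\ell 1}=1$, all other coordinates $0$, essentially every column of the Jacobian vanishes, so its rank there is at most $1$; smoothness off the vertex forces the number of equations $s-2$ to be at most $1$, i.e.\ a hypersurface; (ii) any multiexponent $\mu_i>(1,1)$ makes the partials $f_{ij}$ of that monomial vanish jointly along a positive-dimensional subset of $X$, so every multi-variable monomial must be $T_{i1}T_{i2}$, and the single-variable exponents are constrained only by pairwise coprimality. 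This yields the three normal forms in a few lines. What your route buys is independence from the explicit presentation, staying entirely inside the combinatorics of $\D$; what it costs is a considerably heavier case analysis, and that is where your write-up falls short.

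Concretely, the two decisive steps of your plan are asserted rather than proved, and they are, in substance, the whole theorem. First, the reduction to $s=3$: you say one must show that a projective-type facet meeting at least three of the $z_i$ exists whenever more than three coefficients are nontrivial, but you do not show it. Moreover, Proposition~\ref{sec:prop-D-smooth} is a statement up to linear equivalence ($\D\sim\Delta_y\otimes P+\Delta_z\otimes Q$), so ``meeting three points'' is not yet a contradiction: you must also rule out that translating the facet's coefficients by lattice vectors with principal sum concentrates its support on two points, which is an arithmetic claim about the denominators of the vertices $v_{ij}$ produced in Theorem~\ref{sec:thm-ufd}. Second, the enumeration at $s=3$ (``regularity of all face cones \ldots forces $\sigma$ to be regular, collapses each coefficient to a unimodular segment or a point, and rules out $r_i\ge 3$'', and the claim that a two-vertex coefficient is forced to multiplicities $(1,1)$) is stated with no verification; note also that your remark that a single-vertex coefficient gives a ray $\delta_z$ is only correct when the corresponding tail face is $\{0\}$. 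Until these claims are carried out, the proposal is a plausible plan rather than a proof; if you want a complete argument, the paper's Jacobian computation is the short way through.
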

\begin{proof}
  We consider the Jacobian of a algebra as in corollary~\ref{sec:cor-ufd-cplx-1}.
  \[
  \left( 
    \begin{array}{cccccccccccccccc}
    z_3 f_{11} &  \cdots & z_3f_{1r_1} & f_{21} &  \cdots & f_{2r_2} &f_{31} &  \cdots & f_{3r_3} & & & & & \\
    z_4 f_{11} &  \cdots & z_4f_{1r_1} & f_{21} &  \cdots & f_{2r_2} & &   &  & f_{41} &  \cdots & f_{4r_4}& &  \\
\vdots &  & \vdots & \vdots  &   & \vdots & & & & & & & \ddots & \\
 z_s f_{11} &  \cdots & z_r f_{1r_1} & f_{21} &  \cdots & f_{2r_2} &  &   &  & &  &  &  &  f_{s1} &  \cdots & f_{sr_s} 
  \end{array}\right)
  \]
Here, $f_{ij}$ denotes the partial derivative $\nicefrac{\partial T_i^{\mu_i}}{\partial T_{ij}}$. Since we consider singularities of dimension at least three we must have  $r_\ell>1$ for at least one $\ell$. Then for 
\[
T_{ij}=
\begin{cases}
  1, & \text{if } (i,j)=(\ell,1),\\
 0, &  \text{otherwise.}
\end{cases}
\]
all but one column vanish. Hence, we are in the case of a hypersurface, because the matrix has to have full rank.

Now, one easily checks that a multiexponent $\mu_i > (1,1)$ automatically leads to partial derivatives $f_{ij}$ which jointly vanish even if one of the $T_{ij}$ does not vanish. Hence, the singular locus has dimension at least one.

Alternatively one could use Theorem~\ref{sec:thm-isolated-sing} and
Proposition~\ref{sec:prop-D-smooth} to prove the claim.
\end{proof}

\bibliographystyle{alpha}
\bibliography{tsing.bib}

\end{document}